\documentclass[10pt]{amsart}

\usepackage[utf8]{inputenc}
\usepackage{amsthm}
\usepackage{amsmath}
\usepackage{amscd}
\usepackage[mathscr]{euscript}
\usepackage{tikz-cd}
\usepackage{url}
\usepackage{hyperref}
\usepackage{amssymb}
\subjclass[2020]{14E08, 14D06, 14M25}

\newtheorem{theorem}{Theorem}
\numberwithin{theorem}{section}
\newtheorem{proposition}[theorem]{Proposition}
\newtheorem{lemma}[theorem]{Lemma}
\newtheorem{corollary}[theorem]{Corollary}
\theoremstyle{definition} 
\newtheorem{definition}[theorem]{Definition}
\theoremstyle{remark}

\numberwithin{theorem}{section}
\numberwithin{equation}{section}
\setcounter{tocdepth}{1}

\providecommand{\C}{\mathbb C} 

\providecommand{\Q}{\mathbb Q}
\providecommand{\RR}{\mathbb R}

\providecommand{\ZZ}{\mathbb Z}

\providecommand{\charac}{\operatorname{char}}

\providecommand{\supp}{\operatorname{Supp}}

\providecommand{\val}{\operatorname{val}}

\providecommand{\Spec}{\operatorname{Spec}}

\providecommand{\A}{\mathbb A}

\providecommand{\PP}{\mathbb P}
\providecommand{\Gm}{\mathbb{G}_m}

\providecommand{\pr}{\mathrm{pr}}
\providecommand{\Gr}{\mathrm{Gr}}
\providecommand{\HH}{\mathscr{H}}

\providecommand{\id}{\mathrm{id}}
\providecommand{\VOL}{\mathrm{Vol}_{\mathrm{sb}}}
\providecommand{\Rt}{\mathscr{R}}
\providecommand{\Kt}{\mathscr{K}}
\providecommand{\spe}{\operatorname{sp}}
\providecommand{\bdd}{\operatorname{bdd}}

\providecommand{\SB}{\mathrm{SB}}

\providecommand{\OO}{\mathscr O}


\providecommand{\ker}{\operatorname{ker}}


\title{Stable rationality of hypersurfaces of mock toric variety II}
\author{Taro Yoshino}
\date{\today}
\address{Graduate School of Mathematical Sciences, The University of Tokyo, 3-8-1 Komaba,
Meguro-ku, Tokyo, 153-8914, Japan}
\email{yotaro@ms.u-tokyo.ac.jp}

\begin{document}

\begin{abstract}
    In recent years, there has been a development in approaching rationality problems through motivic methods (cf. [Kontsevich--Tschinkel'19], [Nicaise--Shinder'19], [Nicaise--Ottem'21]). 
    
    This method requires the explicit construction of degeneration families of curves with favorable properties. 
    While the specific construction is generally difficult, [Nicaise-–Ottem'22] combines combinatorial methods to construct degeneration families of hypersurfaces in toric varieties and mentions the stable rationality of a very general hypersurface in projective spaces. 
    
    In this paper, we substitute mock toric varieties for toric varieties and we prove the following theorem from the motivic method: If a very general hypersurface of degree $d$ in $\mathbb{P}^{2n-5}_\mathbb{C}$ is not stably rational, then a very general hypersurface of degree $d$ in $\mathrm{Gr}_\mathbb{C}(2, n)$ is not stably rational. 
\end{abstract}

\maketitle
\tableofcontents
\section{Introduction}
 \subsection{Motivic method for the rationality problem}
    The rationality problem is one of the central problems in algebraic geometry. 
    Recently, there exists motivic approach for the rationality problem(cf. \cite{KT19}, \cite{NO21}, \cite{NS19}). 
    
    Let $K$ be a field.  
    Two schemes X and Y of finite type over $K$ are called stably birational if $X\times \PP^m_K$ is birational to $Y\times \PP^n_K$ for some non-negative integers $m$ and $n$. 
    If a $K$-variety $X$ is stably birational to $\Spec(K)$, then we call $X$ is stably rational. 
    In particular, a rational $K$-variety $X$ is stably rational. 
    Let $\SB_K$ denote the set of stable birational equivalence classes $\{X\}_\mathrm{sb}$ of integral $K$-schemes $X$ of finite type, and $\ZZ[\SB_K]$ be the free abelian group on $\SB_K$.    
    Moreover, for any variety $X$ over $K$, it holds that $X$ is stably rational over $K$ if and only if $\{X\}_\mathrm{sb} = \{\Spec(K)\}_\mathrm{sb}$ by definition. 
    In \cite{NO21}, they constructed a ring morphism as follows: 
    \begin{proposition}\label{prop: NONO21}\cite[Lemma.3.3.5]{NO21}
        Let $k$ be an algebraically closed field with $\charac(k) = 0$ and let $\Kt$ denote the field of the Puiseux series over $k$. 
        Then there exists a unique ring morphism $\VOL\colon \ZZ[\SB_{\Kt}]\rightarrow \ZZ[\SB_k]$ such that for every strictly toroidal proper $\Rt$-scheme $\mathscr{X}$ with smooth generic fiber $X = \mathscr{X}_{\Kt}$, we have 
        \[
            \VOL(\{X\}_{\mathrm{sb}}) = \sum_{E\in \mathcal{S}(\mathscr{X})}(-1)^{\mathrm{codim}(E)}\{E\}_{\mathrm{sb}}
        \], 
        where $\{E\}$ is a stratification of $\mathscr{X}_k$. 
    \end{proposition}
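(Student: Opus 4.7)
The plan is to follow the Nicaise--Shinder construction of a motivic volume and then pass to the Larsen--Lunts quotient killing $\mathbb{L}$. First I would check that every smooth proper variety $X$ over $\Kt$ admits a strictly toroidal proper $\Rt$-model $\mathscr{X}$. Since $\Kt$ is the Puiseux series field over an algebraically closed field of characteristic $0$, it is henselian and admits arbitrary $n$-th roots of a uniformizer, so semistable reduction (or its toroidal generalization) provides the desired model. This is what makes the right-hand side of the formula a candidate definition.

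Next I would \emph{define} $\VOL(\{X\}_{\sb})$ by the alternating stratum sum, and the central step is to show that the sum is independent of the chosen strictly toroidal model. Two strictly toroidal models of the same $X$ are connected by a zigzag of toroidal birational modifications, which by weak factorization for toroidal morphisms (Abramovich--Karu--Matsuki--W{\l}odarczyk) can be decomposed into toroidal blow-ups along smooth toroidal centers. I would then verify invariance of the alternating sum under a single such blow-up: the exceptional divisor is a projective bundle over the blow-up center, and the new strata produced by a stellar subdivision of the associated cone complex recombine so that the alternating sum is unchanged modulo the relation $\{E \times \PP^n_k\}_{\sb} = \{E\}_{\sb}$. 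This is the step I expect to be the main obstacle, as it reduces to a careful combinatorial identity about how strata and their codimensions transform under stellar subdivision of the Kato fan of $\mathscr{X}$.

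Once $\VOL$ is well-defined on isomorphism classes of smooth proper $\Kt$-varieties, I would check that it descends to stable birational classes. Because $\{X \times \PP^1_{\Kt}\}_{\sb} = \{X\}_{\sb}$, it suffices to compare the volumes of $X$ and $X \times \PP^1_{\Kt}$; taking the product of a strictly toroidal model of $X$ with $\PP^1_{\Rt}$ gives a strictly toroidal model whose stratification is the product of the two, and the stratum formula for the product evaluates to the same class after applying the relation above. Linear extension then yields the group homomorphism $\VOL\colon \ZZ[\SB_{\Kt}] \rightarrow \ZZ[\SB_k]$.

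Finally, for the ring-morphism property I would observe that the fiber product of two strictly toroidal proper $\Rt$-schemes (after a possible refinement so that the combined toroidal structure is again strictly toroidal) has stratification equal to the product of the individual stratifications, and that the codimension signs multiply correctly so that $\VOL(\{X\}_{\sb} \cdot \{Y\}_{\sb}) = \VOL(\{X\}_{\sb}) \cdot \VOL(\{Y\}_{\sb})$. Uniqueness is then immediate from the fact that $\ZZ[\SB_{\Kt}]$ is generated as an abelian group by classes of smooth proper $\Kt$-varieties, each of which admits a strictly toroidal $\Rt$-model by the first step, so the stated formula determines $\VOL$ on a generating set.
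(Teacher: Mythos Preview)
The paper does not give its own proof of this proposition: it is simply quoted from \cite[Lemma 3.3.5]{NO21}, and the surrounding text only adds the informal remark that a flat $\Rt$-scheme with smooth generic fiber and reduced snc special fiber is strictly toroidal. So there is no in-paper argument for you to be compared against.

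Your outline is a reasonable sketch of the argument in the Nicaise--Shinder/Nicaise--Ottem literature, and you correctly identify the two genuine ingredients: existence of strictly toroidal models over the Puiseux field (via semistable/toroidal reduction) and invariance of the alternating stratum sum under changes of model. One point worth tightening: in the published proofs the map is not defined directly on $\ZZ[\SB_{\Kt}]$ by the stratum formula and then checked to be well-defined; rather one first constructs a motivic volume $K_0(\mathrm{Var}_{\Kt})\to K_0(\mathrm{Var}_k)/(\mathbb{L})$ using a Bittner-type presentation (so that weak factorization is applied once, at the level of the Grothendieck ring), and only afterwards invokes Larsen--Lunts to identify the target with $\ZZ[\SB_k]$. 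Your step-by-step ``show independence of the model by toroidal weak factorization and stellar subdivision'' is morally the same computation, but packaging it through $K_0$ is what makes the bookkeeping manageable; proving model-independence directly on $\ZZ[\SB]$ would require you to redo the Bittner argument by hand. Also, for the descent to stable birational classes you need more than compatibility with $-\times\PP^1$: you must know that smooth proper birational $\Kt$-varieties have the same volume, which again is handled cleanly by the Bittner presentation rather than by an ad hoc check.
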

    At this point, we omit the detailed definition of a strictly toroidal model(See Definition \ref{NONO21}). 
    Still, it is worth mentioning that if $\mathscr{X}$ is flat over $\Spec(\Rt)$ such that the generic fiber is smooth over $\Spec(\Kt)$ and the closed fiber is a reduced simple normal crossing divisor of $\mathscr{X}$, then $\mathscr{X}$ is a strictly toroidal model. 
    We remark that such ring morphisms appeared in these references(cf. \cite{KT19}, \cite{NO21}, \cite{NS19}), although the details of the forms differ. 
    In application, for a $\Kt$-variety $X$, if there exists a strictly toroidal proper $\Rt$-scheme $\mathscr{X}$ with smooth generic fiber $X = \mathscr{X}_{\Kt}$ and $\VOL(\{X\}_{\mathrm{sb}})\neq \{\Spec(k)\}_\mathrm{sb}$, then $X$ is not stably rational over $\Kt$. 

    In this article, we show the following theorem by using the above ring morphism: 
    \begin{theorem}[See Theorem \ref{thm: d in Grassmanian}]
            If a very general hypersurface of degree $d$ in $\PP_\C^{2n-5}$ is not stably rational, then a very general hypersurface of degree $d$ in $\Gr_\C(2, n)$ is not stably rational.
    \end{theorem}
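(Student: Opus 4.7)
The plan is to apply the motivic volume $\VOL$ of Proposition \ref{prop: NONO21} to a carefully engineered degeneration of a hypersurface in $\Gr_\C(2,n)$, and to arrange that exactly one stratum of the special fiber equals a hypersurface of degree $d$ in $\PP^{2n-5}_\C$, with all other strata stably rational. The assumption of the theorem then propagates: the non-trivial $\{Y\}_{\sb}$-class forces $\VOL(\{X\}_{\sb})\neq\{\Spec(\C)\}_{\sb}$, which rules out stable rationality of $X$ over $\Kt$, and a standard specialization-of-stable-rationality argument (cf.\ \cite{NS19}) transfers this to a very general hypersurface over $\C$.

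The first step is to realize $\Gr_\C(2,n)$ as a mock toric variety, which is the central construction of the paper. Once this is available, the combinatorial input for a degeneration is a subdivision of the associated polytopal/tropical data, mimicking Nicaise--Ottem's construction for toric ambient spaces. Given a very general hypersurface of degree $d$ in $\Gr_\C(2,n)$, I lift it to the Puiseux field $\Kt$ and spread out to a strictly toroidal proper $\Rt$-model $\mathscr{X}\to\Spec(\Rt)$ using the mock toric combinatorics; the components and strata of the special fiber $\mathscr{X}_k$ are then described explicitly by the cells of the chosen subdivision and the Newton data of the defining equation.

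The central combinatorial task is to choose the subdivision so that one codimension-one stratum $E_0$ is isomorphic to a hypersurface of degree $d$ in $\PP^{2n-5}_\C$. Here the numerology $\dim\Gr(2,n)-1=2n-5=\dim\PP^{2n-5}_\C$ is the key clue: one wants an irreducible component of $\mathscr{X}_k$ whose ambient stratum of $\Gr(2,n)$'s degenerate mock toric model is (birational to) $\PP^{2n-5}_\C$ with the restriction of the defining equation cutting out a degree $d$ hypersurface. All remaining strata (higher codimension faces, plus the other components) must be shown to be stably rational; this is usually achieved by arranging them to be toric bundles over projective or rational bases, using the mock toric description.

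Once this is done, the $\VOL$-formula of Proposition \ref{prop: NONO21} gives
\[
\VOL(\{X\}_{\sb}) = -\{Y\}_{\sb} + \sum_{E\neq E_0}(-1)^{\codim E}\{E\}_{\sb},
\]
where $Y$ is a hypersurface of degree $d$ in $\PP^{2n-5}_\C$ and the remaining terms collapse to an integer multiple of $\{\Spec(\C)\}_{\sb}$. If $Y$ is not stably rational, then $\{Y\}_{\sb}\neq\{\Spec(\C)\}_{\sb}$ in the free abelian group $\ZZ[\SB_\C]$, so $\VOL(\{X\}_{\sb})\neq \{\Spec(\C)\}_{\sb}$ and $X$ is not stably rational over $\Kt$; the specialization principle then yields the conclusion for a very general complex hypersurface. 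The main obstacle is the combinatorial/geometric step of engineering the subdivision so that (i) a $\PP^{2n-5}_\C$-stratum appears with the correct degree $d$ hypersurface restriction and (ii) every other stratum is stably rational; the paper's mock toric machinery is built precisely to make such constructions tractable on non-toric varieties like $\Gr(2,n)$.
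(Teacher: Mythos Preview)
Your outline captures the broad motivic strategy, but there is a genuine gap at the step where you assert that ``all remaining strata \ldots\ must be shown to be stably rational.'' In the paper's actual construction (Proposition~\ref{prop: computation result}) the bounded part of the fan consists of seven cones $\tau_0,\tau_1,\tau_2,\tau_3,\sigma_0,\sigma_1,\sigma_2$. Only the strata over $\tau_1$ and $\tau_2$ are shown to be rational (Proposition~\ref{prop: computation result}(e)); the strata over $\tau_0,\tau_3,\sigma_0,\sigma_2$ are \emph{never} shown to be stably rational, and there is no indication that they are. Your plan, as written, would stall here.

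The paper avoids this problem by a differencing trick that you are missing. One fixes all coefficients $a_\alpha$ with $\alpha\notin S^{\sigma_1}$ and varies only those with $\alpha\in S^{\sigma_1}$. Because $S^{\sigma_1}$ is disjoint from $S^{\tau_0},S^{\tau_3},S^{\sigma_0},S^{\sigma_2}$ (Proposition~\ref{prop: computation result}(c)), the unknown strata are \emph{identical} for two such choices $f,f'$, and one obtains
\[
\VOL(\{X_f\}_{\sb})-\VOL(\{X_{f'}\}_{\sb})=\{E^{(1)}_{\sigma_1}\}_{\sb}-\{F^{(1)}_{\sigma_1}\}_{\sb}.
\]
Now one invokes not merely the hypothesis that a very general degree-$d$ hypersurface in $\PP^{2n-5}$ is non-stably-rational, but the stronger consequence \cite[Cor.~4.2]{NO22}: such a hypersurface is not stably birational to \emph{any fixed} variety. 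Hence one can choose $f'$ so that $\{E^{(1)}_{\sigma_1}\}_{\sb}\neq\{F^{(1)}_{\sigma_1}\}_{\sb}$, forcing at least one of $X_f,X_{f'}$ to be non-stably-rational over $\Kt$. This is the key idea that makes the argument work without controlling every stratum.

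A smaller point: the paper does \emph{not} put a mock toric structure on $\Gr_\C(2,n)$ itself (it explicitly says none has been found). Instead it works with a mock toric variety $T_{N^\dagger}\times Z^\circ$ that is merely birational to $\Gr_\C(2,n)$ (Proposition~\ref{prop:grassman-mock}); this suffices since stable birational equivalence is all that matters.
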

    In particular, the following corollary holds from \cite[Corollary 1.2]{S19} immediately:  
    \begin{corollary}[See Theorem \ref{cor: log bound}]
        If $n\geq 5$ and $d \geq 3 + \log_2(n-3)$, then a very general hypersurface of degree $d$ of $\Gr_\C(2, n)$ is not stably rational.         
    \end{corollary}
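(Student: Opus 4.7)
The strategy is to apply the ring morphism $\VOL$ of Proposition \ref{prop: NONO21} to an explicit degeneration of a very general degree-$d$ hypersurface $X \subset \Gr_\Kt(2,n)$, and read off the stable birational class of a very general degree-$d$ hypersurface $Y \subset \PP_\C^{2n-5}$ from the resulting alternating sum.

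First I would construct a strictly toroidal proper $\Rt$-model $\mathscr{X}$ of $X$ using the mock-toric structure of $\Gr(2,n)$ developed in part I of this series. Concretely, one exhibits $\Gr(2,n)$ as a mock toric variety, chooses a very general tropical polynomial of degree $d$ supported on the corresponding combinatorial data, and takes the tropical hypersurface family, in analogy with Nicaise--Ottem's construction in the toric case. The combinatorial input must be arranged so that the resulting family is strictly toroidal (rather than merely toroidal), which is the key technical requirement for Proposition \ref{prop: NONO21} to apply.

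Next I would enumerate the strata $\mathcal{S}(\mathscr{X}_k)$, each indexed by a cell of a polyhedral complex, and compute their stable birational classes. By the very general choice of tropical polynomial, each stratum is a hypersurface section of a mock-toric stratum of the ambient degeneration of $\Gr(2,n)$. I expect that all strata except a distinguished one are stably rational (via the unirationality of the mock-toric pieces combined with the generic behavior of a hypersurface of degree $d$ cutting them), while the distinguished stratum is stably birational to a very general degree-$d$ hypersurface $Y$ in $\PP_\C^{2n-5}$; the appearance of $\PP^{2n-5}$ specifically is dictated by the combinatorics of the mock-toric structure of $\Gr(2,n)$. Collecting terms should give
\[
    \VOL(\{X\}_{\sb}) = \{Y\}_{\sb} + c\,\{\Spec k\}_{\sb}
\]
in $\ZZ[\SB_k]$ for some integer $c$.

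To conclude, suppose for contradiction that $X$ is stably rational, so $\{X\}_{\sb} = \{\Spec \Kt\}_{\sb}$. Since $\VOL$ is a ring morphism, it sends the multiplicative identity to the multiplicative identity, giving $\VOL(\{X\}_{\sb}) = \{\Spec k\}_{\sb}$. Comparing with the formula above and using that $\SB_k$ is a $\ZZ$-basis of $\ZZ[\SB_k]$, we must have $c = 0$ and $\{Y\}_{\sb} = \{\Spec k\}_{\sb}$, i.e., $Y$ is stably rational, contradicting the hypothesis. The main obstacle is the first step: producing a strictly toroidal $\Rt$-model from the mock-toric structure of $\Gr(2,n)$, and then verifying that every stratum except the distinguished one is stably rational. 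This requires the mock-toric machinery developed in the body of the paper together with careful book-keeping of the tropical polyhedral strata and their intersections.
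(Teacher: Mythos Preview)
Your proposal conflates the corollary with the main theorem. The corollary's proof in the paper is one line: Schreieder's bound \cite[Corollary~1.2]{S19} shows that under the hypothesis $d \geq 3 + \log_2(n-3)$ a very general degree-$d$ hypersurface in $\PP_\C^{2n-5}$ is not stably rational, and then Theorem~\ref{thm: d in Grassmanian} transfers this to $\Gr_\C(2,n)$. You are instead sketching a direct proof of Theorem~\ref{thm: d in Grassmanian}, and even there your outline has two genuine gaps.

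First, $\Gr(2,n)$ is \emph{not} given a mock toric structure in this paper or in part~I; the paper explicitly says none has been found. Instead one passes to a birational model $T_{N^\dagger}\times Z^\circ$ (Proposition~\ref{prop:grassman-mock}), where $Z$ is a mock toric variety built from a hyperplane arrangement in $\PP^{n-3}$, and works there. Second, and more seriously, the paper does \emph{not} show that every stratum except the distinguished one is stably rational. Of the seven bounded cones $\tau_0,\ldots,\tau_3,\sigma_0,\sigma_1,\sigma_2$, only $\tau_1,\tau_2$ are shown rational and $\sigma_1$ gives the $\PP^{2n-5}$ hypersurface; the strata for $\tau_0,\tau_3,\sigma_0,\sigma_2$ remain uncontrolled. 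The key trick is to compare two general parameter choices differing only on the coordinates indexed by $S^{\sigma_1}$: since $S^{\sigma_1}$ is disjoint from $S^{\tau_0},S^{\tau_3},S^{\sigma_0},S^{\sigma_2}$, those uncontrolled contributions cancel in the \emph{difference} of volumes, leaving only the $\sigma_1$ term. One then invokes \cite[Corollary~4.2]{NO22} to produce two hypersurfaces in $\PP^{2n-5}$ with distinct stable birational classes, forcing one of the two Grassmannian hypersurfaces to be non-stably-rational. Your formula $\VOL(\{X\}_{\sb}) = \{Y\}_{\sb} + c\{\Spec k\}_{\sb}$ is therefore not available, and without the cancellation argument the strategy does not close.
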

    \subsection{The difficulty of motivic method}
    While this method of determination is quite simple, in practice, there are several difficult problems. 
    For example, we have the following three problems : 
    \begin{itemize}
        \item [1.] To construct a ``good'' proper model with a smooth generic fiber. 
        \item [2.] To enumerate strata in the stratification of the model. 
        \item [3.] To determine the stable birational equivalence classes (or birational equivalence classes) for each stratum.
    \end{itemize}
    Previous work \cite{NO22} successfully overcomes these three problems as follows(cf. \cite[Theorem. 3.14]{NO22}):
    \begin{itemize}
        \item [1.] They focused on a hypersurface $Y$ of an algebraic torus. 
        This hypersurface has a Tropical compactification $\overline{Y}$ (cf.\cite{Tev}); in particular, they were compactified as a hypersurface of a proper toric variety $X$. 
        If $Y$ has a good condition("Newton nondegenerate"), then the toric resolution of $X$ induces the resolution of $\overline{Y}$. 
        This property is applied to the construction of a good model. 
        \item [2.] The ambient space is a toric variety, and its decomposition of torus orbits induces the stratification of this hypersurface. 
        Moreover, each stratum is also a hypersurface of an algebraic torus, and we can calculate the defining Laurant polynomial of each stratum combinatorially.  
        \item [3.] They construct the strictly toroidal model of a quartic 5-fold, which has a stratum $E$ such that $E$ is birational to a very general quartic double fourfold. In particular, it is not stably rational (cf.\cite{HPT19}). 
        In addition to this, they showed that a very general quartic 5-fold is not stably rational(\cite[Cor. 5.2]{NO22}). 
    \end{itemize}
    It is natural to consider applying the method of \cite{NO22} to mention the rationality of hypersurfaces in other rational varieties. 
    For example, Grassmanian varieties are. 
    Grassmanian varieties have a Schubert decomposition; in particular, they have an affine space as a dense open subspace. 
    Moreover, an affine space has an algebraic torus as a dense open subspace. 
    Thus, it seems straightforward to construct a strictly toroidal model of the variety, which is birational to the hypersurface of the Grassmanian variety. 
    However, this does not work well because the defining Laurant polynomial of the hypersurface is not Newton nondegenerate. 
    The necessity of this condition arises from the fact that in this method, singularities of this hypersurface can only be resolved through a toric resolution of the ambient toric variety. 
    For example, suppose this hypersurface has a singular point in the dense open subset of the ambient toric variety. 
    In that case, we cannot resolve the singularity of it in this case. 
    
    \subsection{Mock toric varieties}
    In the previous subsection, we indicate that there is a limit to constructing concretely the strict toroidal models of some varieties from toric varieties. 
    In this article, we substitute mock toric varieties for toric varieties and prove the main theorem \ref{thm: d in Grassmanian}. 
    Mock toric varieties are introduced by the author of this paper in \cite[Definition 3.1]{Y23}. 
    From \cite[Section 3 \& 4]{Y23}, mock toric varieties inherit some same properties as toric varieties. 
    In particular, if some conditions hold, we can construct proper strictly toroidal models of hypersurfaces of mock toric varieties concretely as these of toric varieties(See \cite[Proposition 7.7]{Y23}). 
    Moreover, we show that we can construct the birational proper strict toroidal models of general hypersurfaces of $\Gr_\Kt(2, n)$(See Proposition \ref{prop:grassman-mock}) from mock toric varieties, and we show the main theorem. 

    Consequently, the proof strategy is the same as in \cite{NO22}, with the difference that the scope has been extended from hypersurfaces of toric varieties to those of mock toric varieties. 
    The key point of this proof is the result of the stable birational equivalence class of a very general hypersurface in a projective space(\cite[Corollary. 4.2]{NO22}). 
    We remark that for any integer $m\geq 3$, we can't find these models of hypersurfaces of $\Gr_\Kt(m, n)$ related to mock toric varieties.
    \subsection{The rationality problem for hypersurfaces of Grassmanian varieties}
        We discuss previous work on the rationality of hypersurfaces in $\Gr_\C(2, n)$. 
        We fix a Pl\"{u}cker embedding $\Gr_\C(2, n)\hookrightarrow \PP^{n(n-1)/2 - 1}_\C$, let $X$ denote $\Gr_\C(2, n)$, 
        and $X_{(r)}$ denote the intersection of $X$ and hypersurfaces of degree = $r$ in $\PP^{n(n-1)/2 - 1}_\C$. 
        Because $K_X = \OO_X(-n)$, a general $X_{(r)}$ is a Fano variety for $1\leq r\leq n$. 
        We itemize the rationality of $X_{(r)}$ as follows:
        \begin{itemize}
            \item $X_{(1)}$ is rational(\cite[Theorem 2.2.1.]{Xu11}). 
            \item $X_{(2)}\subset \Gr_\C(2, 4)$ is complete intersection of 2 quadrics in $\PP^5_\C$, so it is a rational. 
            \item A very general $X_{(3)}\subset \Gr_\C(2, 4)$ is not stably rational(\cite[Theorem 1.1]{HT19})
            \item $X_{(2)}\subset \Gr_\C(2, 5)$ is a Gushel-Mukai 5-fold. 
            Moreover, it is rational(\cite[Proposition 4.2]{DK18}).
            \item Ottem showed that a very general $X_{(3)}\subset \Gr_\C(2, 5)$ is not stably rational in his unpublished paper. 
            \item We can show that a very general $X_{(4)}\subset \Gr_\C(2, 5)$ is not stably rational from \cite[Theorem 7.1.2]{Simen20} and \cite[Corollary 4.2.2]{NO22}. 
        \end{itemize}
    
    \subsection{Outline of the paper}
        This paper is organized as follows. 
        In Section 2, we construct the formulation of a stable birational volume of hypersurfaces of mock toric varieties from \cite[Proposition 7.7]{Y23}. 
        In Section 3, we define an analog of Newton polytope for mock toric varieties and consider Newton nondegeneracy. 
        In general, confirming Newton nondegeneracy of a specific hypersurface takes time and effort. 
        Instead, we apply the formula to a general section of linear systems, following the idea from \cite{NO22}. 
        In Section 4, we apply the formula to the stable rationality problem of a very general hypersurface in $\Gr_\C(2, n)$ of some degrees.  
    \subsection{Acknowledgment}
        The author is grateful to his supervisor, Yoshinori Gongyo, for his encouragement.
\section{Stably birational volume of hypersurfaces of mock toric varieties}
        In this section, we compute the stably birational volume of hypersurfaces of mock toric varieties from the proper strictly toroidal model constructed in \cite[Proposition 7.7]{Y23}. 

        We recall some notations from \cite{NO21} to compute the stable birational volume using the results obtained from \cite{NO21}. 
		\begin{definition}[\cite{NO21}]\label{NONO21} 
            Let $\mathscr{X}$ be a flat and separated $\Rt$-scheme of finite type, and let $x\in\mathscr{X}_k$ be a point. 
            We say that $\mathscr{X}$ is strictly toroidal at $x$ if there exist a toric monoid $S$, an open neighbourhood $U$ of $x$ in $\mathscr{X}$, and a smooth morphism of $\Rt$-schemes $U \rightarrow \Spec(\Rt[S]/(\chi^\omega - t^q))$, where $q$ is a positive rational number, $\omega$ is an element of $S$, and $\chi^\omega$ is a torus invariant monomial in $k[S]$ associated with $\omega$ such that $k[S]/(\chi^\omega)$ is reduced. 
            We call that $\mathscr{X}$ is \textbf{strictly toroidal} if it is so at any $x\in \mathscr{X}_k$. 
            
            Let $\mathscr{X}$ be a strictly toroidal $\Rt$-scheme.  
            Then a \textbf{stratum} $E$ of $\mathscr{X}_k$ is a connected component of the intersection of a non-empty set of irreducible components of $\mathscr{X}_k$. 
            Let $\mathrm{codim}(E)$ denote the codimension of $E$ in $\mathscr{X}_k$, and $\mathcal{S}(\mathscr{X})$ denote the set of all strata in $\mathscr{X}_k$.
        \end{definition}

        Now, we compute the stably birational volume of hypersurfaces of mock toric varieties from the proper strictly toroidal model constructed in \cite[Proposition 7.7]{Y23}. 
        \begin{proposition}\label{prop: neo irreducible computation}
        		We keep the notation in \cite[Proposition 7.7]{Y23}. 
        		Then the following statements follow: 
        		\begin{enumerate}
        			\item[(a)] We assume that $f$ is nondegenerate for $\Delta'$. 
                Let $\tau\in\Delta'_{\spe}\cap {}_f\Delta'$ be a cone and $\{E^{(1)}_{\tau}, E^{(2)}_{\tau}, \ldots, E^{(r_\tau)}_{\tau}\}$ denote all connected components of $H_{W, f}\cap W_{\tau}$. 
                Then for any $1\leq j\leq r_\tau$, $E^{(j)}_{\tau}$ is a dense open subset of $\overline{E^{(j)}_{\tau}}$ and $\dim(E^{(j)}_{\tau}) = \dim(Y)-\dim(\tau)$. 
                Moreover, as a closed subscheme of $\overline{W_{\tau}}$, the following equation holds:
                \[
                    H_{W, f}\cap \overline{W_{\tau}} = \coprod_{1\leq j\leq r_\tau} \overline{E^{(j)}_{\tau}}
                \]
                \item[(b)] We use the notation in (a). 
                Then for any $1\leq j\leq r_\tau$, the following equations hold:
                \begin{align*}
                    \biggl\{E^{(j)}_{\tau}\biggr\}_{\mathrm{sb}} &= \biggl\{\overline{E^{(j)}_{\tau}}\biggr\}_{\mathrm{sb}}\\
                    E^{(j)}_{\tau} &= \overline{E^{(j)}_{\tau}}\cap W_{\tau}
                \end{align*}
                \item[(c)] We use the notation in (a). 
                We assume that the assumption of (a) holds. 
                Let $\mathfrak{I}$ denote a set which consists of all irreducible components of $\HH_k$. 
                Then the following equation holds:
                \[
                    \mathfrak{I} = \coprod_{\substack{\gamma\in \Delta'_{\spe}\cap {}_f\Delta'\\ 
                    \dim(\gamma) = 1}} \biggl\{\overline{E^{(j)}_{\gamma}}\biggr\}_{1\leq j\leq r_\gamma}
                \]
                \item[(d)] We assume that the assumption of \cite[Proposition 7.7(k)]{Y23} holds. 
                Moreover, we assume that $\Delta'$ is compactly arranged. 
                We use the notation in (a) and \cite[Proposition 7.7(j)]{Y23}. 
                Let $1\leq i\leq r$ be an integer and $E\in\mathcal{S}(\HH_i)$ be a stratum. 
                Then there unique exists $\tau\in \Delta'_{\bdd}\cap {}_f\Delta'$ and $1\leq j\leq r_{\tau}$ such that $E = \overline{E^{(j)}_{\tau}}$. 
                \item[(e)] We assume that the assumption in \cite[Proposition 7.7(k)]{Y23} holds and use the notation in (a) and \cite[Proposition 7.7(j)]{Y23}.  
                Let $\tau\in \Delta'_{\bdd}\cap {}_f\Delta'$ be a cone  and $1\leq j\leq r_{\tau}$ be an integer. 
                Then there exists $1\leq i\leq r$ and $E\in\mathcal{S}(\HH_i)$ such that $\overline{E^{(j)}_{\tau}} = E$. 
                \item[(f)] 
                We assume that the assumption of \cite[Proposition 7.7(k)]{Y23} holds. 
                Let $\Delta''$ be a strongly convex rational polyhedral fan in $(N'\oplus\ZZ)_\RR$ such that $\pi^1$ is compatible with the fans $\Delta''$ and $\Delta\times\Delta_!$, and $\supp(\Delta'') = (\pi^1_\RR)^{-1}(\supp(\Delta\times\Delta_!))$. 
                We use the notation in (a) and \cite[Proposition 7.7(j)]{Y23}. 
                Let $W'$ be a mock toric variety induced by $Z^1$ along $\pi^1_*\colon X(\Delta'')\rightarrow X(\Delta\times\Delta_!)$ and $s^1$. 
                Like as in (a), let $\{E^{(j)}_{\sigma}\}_{1\leq j\leq r_\sigma}$ denote all connected components of $H_{W', f}\cap W'_\sigma$ for $\sigma\in\Delta''$. 

                We assume that $\Delta'$ is a refinement of $\Delta''$ and $f$ is fine for $\Delta''$.
                Then $f$ is non-degenerate for $\Delta''$ and the following equation holds:
                \[
                    \sum_{1\leq i\leq r}\VOL\biggl(\bigl\{H^\circ_{Y_{\Kt}, f_i}\bigr\}_{\mathrm{sb}}\biggr) = \sum_{\substack{\sigma\in{\Delta''}_{\bdd}\\ 
                    \sigma\in{}_f\Delta''}}(-1)^{\dim(\sigma)-1}\biggl(\sum_{1\leq j\leq r_\sigma} \bigl\{E^{(j)}_{\sigma}\bigr\}_{\mathrm{sb}}\biggr)
                \]
        		\end{enumerate}
        \end{proposition}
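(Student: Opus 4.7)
The proposition decomposes into a local/combinatorial dictionary (items (a)--(e)) identifying strata of $\HH_k$ with pairs $(\tau, j)$ indexed by cones of the fan and connected components, followed by an application (item (f)) that invokes Proposition \ref{prop: NONO21} and transfers the resulting formula from the refined fan $\Delta'$ to the coarser fan $\Delta''$.

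For (a), I would argue that nondegeneracy of $f$ for $\Delta'$ is precisely the condition ensuring that the restriction of $f$ to each torus orbit $W_\tau$ with $\tau \in {}_f\Delta'$ cuts out a smooth hypersurface. Since $W_\tau$ is irreducible as a single orbit, each connected component $E^{(j)}_\tau$ is smooth irreducible, hence dense in its closure and of dimension $\dim Y - \dim \tau$. The disjoint-union decomposition of $H_{W,f} \cap \overline{W_\tau}$ follows because $\overline{W_\tau} \setminus W_\tau$ is a union of orbit closures of strictly smaller dimension, so it cannot glue distinct top-dimensional components together. Part (b) is immediate: $E^{(j)}_\tau$ is dense open in its closure, giving birationality and hence the stable birational equality, and $\overline{E^{(j)}_\tau} \cap W_\tau$ is an irreducible open subset of $W_\tau$ contained in $H_{W,f}$, so must coincide with the chosen component. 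Items (c)--(e) then form the standard fan-stratum dictionary inherited from \cite[Proposition 7.7]{Y23}: maximal strata (irreducible components of $\HH_k$) come from rays of ${}_f\Delta'$, while strata of $\HH_i$ correspond bijectively to pairs $(\tau, j)$ with $\tau \in \Delta'_{\bdd} \cap {}_f\Delta'$.

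For (f), the plan is to apply Proposition \ref{prop: NONO21} to each proper strictly toroidal model $\HH_i$, sum over $i$, and use parts (d), (e), (b) together with $\codim_{\HH_{i,k}} \overline{E^{(j)}_\tau} = \dim \tau - 1$. This gives
\[
\sum_{1\leq i \leq r} \VOL(\{H^\circ_{Y_\Kt, f_i}\}_{\sb}) = \sum_{\tau \in \Delta'_{\bdd} \cap {}_f\Delta'} (-1)^{\dim \tau - 1} \sum_{1\leq j \leq r_\tau} \{E^{(j)}_\tau\}_{\sb}.
\]
To transfer this to a sum over $\Delta''$, I would group cones $\tau \in \Delta'$ by the unique $\sigma \in \Delta''$ with $\relint(\tau) \subset \relint(\sigma)$. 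The morphism $\pi^1$ should restrict to a torus fibration $W_\tau \to W'_\sigma$, inducing a torus fibration $E^{(j)}_\tau \to E^{(j)}_\sigma$; fineness of $f$ for $\Delta''$ guarantees compatible indexing of components and $r_\tau = r_\sigma$. Since products with tori preserve stable birational class, $\{E^{(j)}_\tau\}_{\sb} = \{E^{(j)}_\sigma\}_{\sb}$. Combined with the Euler-characteristic identity
\[
\sum_{\substack{\tau \in \Delta' \\ \relint(\tau) \subset \relint(\sigma)}} (-1)^{\dim \tau} = (-1)^{\dim \sigma},
\]
obtained from additivity of compactly supported Euler characteristic on the subdivision of $\relint(\sigma)$, this yields the stated formula.

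The main obstacle is the final refinement step. It requires (i) a precise matching of component indices $j$ across refinements, where fineness of $f$ for $\Delta''$ is essential; (ii) the identification of fibers of $\pi^1$ between orbit strata as tori in the mock toric framework, which must be extracted from the combinatorial machinery of \cite{Y23}; and (iii) the equivalence of boundedness of $\sigma \in \Delta''$ with boundedness of all $\tau \in \Delta'$ having $\relint(\tau) \subset \relint(\sigma)$. Nondegeneracy of $f$ for $\Delta''$ then drops out as a routine consequence of fineness together with nondegeneracy for the refinement.
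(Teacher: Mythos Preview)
Your plan for parts (a)--(e) is essentially the paper's argument; the paper makes the steps precise by invoking specific results from \cite{Y23} (notably Propositions 6.4, 6.11, 6.12 for (a), and Corollary 3.8 together with Proposition 3.4 for the orbit-closure combinatorics in (c)--(e)), but the logical structure is the same. Your obstacle (ii) is also correctly identified: the paper obtains the trivial torus fibration $H_{W,f}\cap W_\tau \to H_{W',f}\cap W'_\sigma$ from the proof of \cite[Proposition 6.14(e)]{Y23}, which simultaneously gives $r_\tau = r_\sigma$ and the matching of components.

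There is, however, a genuine gap in your Euler-characteristic step for (f). Your identity
\[
\sum_{\substack{\tau\in\Delta'\\ \relint(\tau)\subset\relint(\sigma)}} (-1)^{\dim\tau} = (-1)^{\dim\sigma}
\]
sums over \emph{all} $\tau$ subdividing $\sigma^\circ$, whereas the formula you derived from Proposition~\ref{prop: NONO21} only sums over $\tau\in\Delta'_{\bdd}$. Your obstacle (iii) is true as stated ($\sigma$ bounded $\Leftrightarrow$ every such $\tau$ is bounded, since a finite union of bounded slices is bounded), and this does settle the case $\sigma\in\Delta''_{\bdd}$. But when $\sigma\notin\Delta''_{\bdd}$, the subdivision of $\sigma^\circ$ typically contains \emph{both} bounded and unbounded cones $\tau$, so the partial sum over $\Delta'_{\bdd}$ is not controlled by your identity, and you have no argument that it vanishes. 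A concrete instance: subdivide a half-plane cone by a single interior ray; one of the two maximal pieces is bounded, the other is not.

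The paper closes this gap by passing to the height-one slice $\sigma^\circ\cap(N'_\Q\times\{1\})$ and invoking the Euler characteristic $\chi'$ of definable sets from \cite[9.6]{HK06}. The key input is \cite[3.7]{NPS16}: $\chi'$ of this slice equals $(-1)^{\dim\tau-1}$ when $\tau\in\Delta'_{\bdd}$ and equals $0$ when $\tau\in\Delta'_{\spe}\setminus\Delta'_{\bdd}$. This lets one rewrite the sum over $\Delta'_{\bdd}$ as a sum of $\chi'$-values over all of $\Delta'_{\spe}$, apply additivity, and obtain $\chi'(\sigma^\circ\cap(N'_\Q\times\{1\}))$, which is again $(-1)^{\dim\sigma-1}$ or $0$ according to whether $\sigma\in\Delta''_{\bdd}$. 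The ordinary compactly supported Euler characteristic cannot do this, since it assigns $(-1)^d$ to any open convex set of dimension $d$, bounded or not.
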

        \begin{proof}
        		We prove the statements from (a) to (f) in order: 
        		\begin{enumerate}
        			\item[(a)] From \cite[Proposition 6.4]{Y23}, there exist 
                $\chi\in k[W_0]^*$ such that $\val^{\tau}_f + \val^{\tau}_{\chi'} \equiv 0$. 
                Let $p^{\tau}$ denote a ring morphism $\Gamma(W(\tau), \OO_{W})\rightarrow \Gamma(W_{\tau}, \OO_{\overline{W_{\tau}}})$ and $f^\tau$ denote $p^{\tau}(\chi f)$. 
                Then from \cite[Proposition 6.11(a)]{Y23}, $H_{W, f}\cap W_{\tau} = H^\circ_{\overline{W_{\tau}}, f^\tau}$. 
                Let $f^\tau = g_1g_2\cdots g_l$ be an irreducible decomposition. 
                We remark that $H_{W, f}\cap W_{\tau}$ is smooth over $k$. 
                Thus, $l = r_{\tau}$ and if it is necessary, we may replace the index such that $E^{(j)}_{\tau} = H^\circ_{\overline{W_{\tau}}, g_j}$ for any $1\leq j\leq r_{\tau}$. 
                Moreover, from \cite[Proposition 6.11(c)]{Y23}, $H_{W, f}\cap \overline{W_{\tau}} = H_{\overline{W_{\tau}}, f^\tau}$ and from \cite[Proposition 6.11(e)]{Y23}, $f^\tau$ is non-degenerate for $\Delta'[\tau]$. 
                Thus, from \cite[Proposition 6.12(c)]{Y23}, the following equation is a connected decomposition of $H_{W, f}\cap \overline{W_{\tau}} = H_{\overline{W_{\tau}}, f^\tau}$:
                \[
                    H_{\overline{W_{\tau}}, f^\tau} = \coprod_{1\leq j\leq r_{\tau}} H_{\overline{W_{\tau}}, g_j}
                \]
                We remark that $\overline{E^{(j)}_{\tau}} = H_{\overline{W_{\tau}}, g_j}$ for any $1\leq j\leq r_{\tau}$. 
                Because $H_{\overline{W_{\tau}}, g_j}$ is a scheme theoretic closure of $H^\circ_{\overline{W_{\tau}}, g_j}$ in $\overline{W_{\tau}}$ and $H^\circ_{\overline{W_{\tau}}, g_j} = H_{\overline{W_{\tau}}, g_j}\cap W_{\tau}$, $E^{(j)}_{\tau}$ is a dense open subset in $\overline{E^{(j)}_{\tau}}$ and $E^{(j)}_{\tau} = \overline{E^{(j)}_{\tau}}\cap W_{\tau}$ for any $1\leq j\leq r_{\tau}$. 
                Furthermore, from the above argument,  $\dim(E^{(j)}_{\tau}) = \dim(W_{\tau}) -1$. 
                Thus, from \cite[Proposition 3.4(d)]{Y23}, we have $\dim(E^{(j)}_{\tau}) = \dim(Y) - \dim(\tau)$. 
                \item[(b)] We have already shown the statement of (b) in the proof of (a).  
                \item[(c)] We use the notation in (a). 
                We can identify with $\HH_k$ and $H_{W, f}\times_{\A^1} \{0\}$. 
                In addition to this, $(H_{W, f}\times_{\A^1}\{0\})\cap W_{\tau} \neq \emptyset$ if and only if $\tau\in\Delta'_{\spe}\cap {}_f\Delta'$. 
                Therefore, $\HH_k$ has the following stratification:
                \begin{align*}
                    \HH_k &= \coprod_{\tau\in\Delta'_{\spe}\cap {}_f\Delta'}(\HH_k\cap W_{\tau})\\
                    &= \coprod_{\tau\in\Delta'_{\spe}\cap {}_f\Delta'}(\coprod_{1\leq j\leq r_{\tau}}E^{(j)}_{\tau})
                \end{align*}
                For any $\tau\in \Delta'_{\spe}\cap {}_f\Delta'$, we have $\tau\not\subset N'_\RR\times\{0\}$. 
                Hence, there exists $\gamma\in\Delta'_{\spe}$ such that $\gamma\preceq\tau$ and $\dim(\gamma) = 1$. 
                From \cite[Proposition 6.11(d)]{Y23}, $\gamma\in{}_f\Delta'$. 
                Moreover, for any $\tau_1$ and $\tau_2\in\Delta'_{\spe}\cap {}_f\Delta'$, $\HH_k\cap W_{\tau_1} \subset \HH_k\cap \overline{W_{\tau_2}}$ if and only if $\tau_2\subset \tau_1$ from \cite[Corollary 3.8]{Y23}. 
                 In conclusion, the following equation holds from (a):
                \[
                    \HH_k = \bigcup_{\substack{\gamma\in\Delta'_{\spe}\cap {}_f\Delta'\\\dim(\gamma) = 1}}(\bigcup_{1\leq j\leq r_{\gamma}}\overline{E^{(j)}_{\gamma}})
                \]
                
                Furthermore, we remark that for any $\tau_1$ and $\tau_2\in\Delta'_{\spe}\cap {}_f\Delta'$, any $1\leq j_1\leq r_{\tau_1}$, and any $1\leq j_2\leq r_{\tau_2}$, if we have $\overline{E^{(j_1)}_{\tau_1}} = \overline{E^{(j_2)}_{\tau_2}}$, then $\tau_1 = \tau_2$ and $j_1 = j_2$. 
                Indeed, because $E^{(j_1)}_{\tau_1} \cap \overline{E^{(j_2)}_{\tau_2}} \neq\emptyset$, we have $W_{\tau_1}\cap\overline{W_{\tau_2}}\neq\emptyset$. 
                Thus, from \cite[Corollary 3.8]{Y23}, $\tau_2\subset \tau_1$. 
                Similarly, $\tau_1\subset \tau_2$ too. 
                From (a), we have $j_1 = j_2$. 
                
                Let $\gamma\in \Delta'_{\spe}\cap {}_f\Delta'$ be a cone such $\dim(\gamma) = 1$ for any $1\leq j\leq r_\gamma$. 
                Then $\dim(\overline{E^{(j)}_{\gamma}}) = \dim(Y) - 1$ from (a) for any $1\leq j\leq r_\gamma$.  
                Thus, from the above remark, the following set is equal to $\mathfrak{I}$:
                \[
                    \coprod_{\substack{\gamma\in\Delta'_{\spe}\cap {}_f\Delta'\\ \dim(\gamma) = 1}}\biggl\{\overline{E^{(j)}_{\gamma}}\biggr\}_{1\leq j\leq r_{\gamma}}
                \]
                \item[(d)] Let $1\leq i'\leq r$ be an integer and $E\in\mathcal{S}(\HH_{i'})$ be a stratum. 
                From \cite[Proposition 7.7(j)]{Y23}, $\HH_k = \coprod_{1\leq i\leq r} (\HH_i)_k$. 
                Thus, all irreducible components of $(\HH_i)_k$ are in $\mathfrak{I}$. 
                Hence, from (c), 
                there exists $\gamma_1, \ldots, \gamma_m\in \Delta'_{\spe}\cap {}_f\Delta'$ and integers $\{j_l\}_{1\leq l\leq m}$, such that $\dim(\gamma_l) = 1$, $1\leq j_l\leq r_{\gamma_l}$ for any $1\leq l\leq m$, and $E$ is a connected component of the following closed subset:
                \[
                    \bigcap_{1\leq l\leq m} \overline{E^{(j_l)}_{\gamma_l}}
                \]
                In particular, from (a), $\cap_{1\leq l\leq m} \overline{W_{\gamma_l}} \neq \emptyset$. 
                Thus, from \cite[Proposition 3.4(a)]{Y23}, there exists $\sigma\in\Delta'$ such that $\gamma_l\preceq\sigma$ for any $1\leq j\leq m$. 
                Hence, from the assumption, there exists $\tau\in\Delta'_{\bdd}$ such that $\gamma_l\preceq\tau$ for any $1\leq l\leq m$. 
                We can take $\tau$ minimal because $\Delta'$ is a fan. 
                Then the following equation holds from the assumption of $\tau$ :
                \[
                    \bigcap_{1\leq l\leq m} \overline{W_{\gamma_l}} = \overline{W_{\tau}}
                \]
                Thus, from (a), $\HH_k\cap \overline{W_{\tau}}\neq \emptyset$. 
                In particular, from \cite[Proposition 3.4(a)]{Y23} and \cite[Proposition 6.11(d)]{Y23}, $\tau\in{}_f\Delta'$. 
                Moreover, the following equation holds:
                \begin{align*}
                    \HH_k\cap \overline{W_{\tau}}&= \bigcap_{1\leq l\leq m} (\HH_k\cap\overline{W_{\gamma_l}})\\
                    &= \coprod_{\{\{h_l\}_{1\leq l\leq m}\mid1\leq h_l\leq r_{\gamma_l}\}} (\bigcap_{1\leq l\leq m} \overline{E^{(h_l)}_{\gamma_l}})
                \end{align*}
                Thus, $E$ is also a connected component of $\HH_k\cap \overline{W_{\tau}}$. 
                Therefore, from (a), there exists $1\leq j\leq r_{\tau}$ such that $E = \overline{E^{(j)}_{\tau}}$. 
                
                We already have shown the uniqueness in the proof of (c)
                \item[(e)] Let $\tau\in\Delta'_{\bdd}\cap{}_f\Delta'$ be a cone and $1\leq j\leq r_{\tau}$ be an integer. 
                Let $\gamma_1, \ldots, \gamma_m$ be all rays of $\tau$. 
                Because $\tau\in\Delta'_{\bdd}$, we have $\gamma_1, \ldots, \gamma_m\in\Delta'_{\bdd}$. 
                Moreover, from \cite[Proposition 6.11(d)]{Y23}, $\gamma_1, \ldots, \gamma_m\in{}_f\Delta'$. 
                Because $\gamma_1, \ldots, \gamma_m$ are all rays of $\tau$, the following equation holds:
                \[
                    \bigcap_{1\leq l\leq m} \overline{W_{\gamma_l}} 
                    = \overline{W_{\tau}}
                \]
                Thus, like as the argument in proof of (d), the following equation holds:
                \begin{align*}
                    \HH_k\cap \overline{W_{\tau}}&= \bigcap_{1\leq l\leq m} (\HH_k\cap\overline{W_{\gamma_l}})\\
                    &= \coprod_{\{\{h_l\}_{1\leq l\leq m}\mid 1\leq h_l\leq r_{\gamma_l}\}} (\bigcap_{1\leq l\leq m}\overline{E^{(h_l)}_{\gamma_l}})
                \end{align*}
                Hence, there exists integers $(j_l)_{1\leq l\leq m}$ such that $1\leq j_l\leq r_{\gamma_l}$ for any  $1\leq l\leq m$ and $\overline{E^{(j)}_{\tau}}$ is a connected component of the following closed subset:
                \[
                    \bigcap_{1\leq l\leq m} \overline{E^{(j_l)}_{\gamma_l}}
                \]
                From (m), for any $1\leq l\leq m$, $\overline{E^{(j_l)}_{\gamma_l}}$ is an irreducible component of $\HH_k$. 
                Now, $\overline{E^{(j)}_{\tau}}$ is non-empty, so there unique exists $1\leq i\leq r$ such that each $\overline{E^{(j_l)}_{\gamma_l}}$ is an irreducible componenst of $(\HH_i)_k$ from \cite[Proposition 7.7(j)]{Y23}. 
                This shows that $\overline{E^{(j)}_{\tau}}\in\mathcal{S}(\HH_i)$. 
                \item[(f)] We can regard $W$ as a mock toric variety induced by $W'$ along $X(\Delta')\rightarrow X(\Delta'')$. 
                Thus, from \cite[Proposition 6.14(e)]{Y23}, $f$ is non-degenerate for $\Delta''$. 
                
                From (d), (e), and the remark in the proof of (c), we can check that the following equation holds:
                \[
                    \coprod_{1\leq i\leq r}\mathcal{S}(\HH_i) = \coprod_{\tau\in\Delta'_{\bdd}\cap {}_f\Delta'}\{\overline{E^{(j)}_{\tau}}\}_{1\leq j\leq r_{\tau}}
                \]
                We remark that $\{H_{Y_\Kt, f_i}\}_{\mathrm{sb}} = \{H^\circ_{Y_\Kt, f_i}\}_{\mathrm{sb}}$ for any $1\leq i\leq r$. 
                Thus, from (a), (b), \cite[Proposition 7.7(k)]{Y23}, and Proposition \ref{prop: NONO21}, the following equation holds:
                \[
                    \sum_{1\leq i\leq r}\VOL\biggl(\bigl\{H^\circ_{Y_{\Kt}, f_i}\bigr\}_{\mathrm{sb}}\biggr) = \sum_{\substack{\tau\in\Delta'_{\bdd}\\ 
                    \tau\in{}_f\Delta'}}(-1)^{\dim(\tau)-1}\biggl(\sum_{1\leq j\leq r_{\tau}} \bigl\{E^{(j)}_{\tau}\bigr\}_{\mathrm{sb}}\biggr)
                \]
                
                Let $\tau\in\Delta'_{\spe}\cap{}_f\Delta'$ be a cone.  
                Then there exists unique $\sigma\in\Delta''$ such that $\tau^\circ\subset\sigma^\circ$ because $\Delta'$ is a refinement of $\Delta''$. 
                Because $\tau\in\Delta'_{\spe}$, we have $\sigma\in\Delta''_{\spe}$. 
                Moreover, from the proof of \cite[Proposition 6.14(e)]{Y23}, $H_{W, f}\cap W_{\tau}$ is a trivial algebraic torus fibration of $H_{W', f}\cap W'_\sigma$ whose dimension of the fiber is $\dim(\sigma) -\dim(\tau)$ from (a). 
                In particular, $\sigma\in{}_f\Delta''$. 
                Moreover, $r_{\tau} = r_\sigma$ and if it is necessary, we can replace the index such that $E^{(j)}_{\tau}$ is a trivial algebraic torus fibration of $E^{(j)}_{\sigma}$ for any $1\leq j\leq r_\sigma$. 

                Conversely, for any $\sigma\in\Delta''_{\spe}\cap{}_f\Delta''$ and for $\tau\in\Delta'$ such that $\tau^\circ\subset\sigma^\circ$, we have $\tau\in\Delta'_{\spe}\cap{}_f\Delta'$ from Proposition  \cite[Proposition 6.14(e)]{Y23}. 

                Thus, the following equation holds:
                \[
                    \sum_{1\leq i\leq r}\rho\biggl(\bigl\{H^\circ_{Y_{\Kt}, f_i}\bigr\}_
                    {\mathrm{sb}}\biggr) = \sum_{\substack{\sigma\in\Delta''_{\spe}\\ 
                    \sigma\in{}_f\Delta''}}\biggl(\sum_{\substack{\tau\in\Delta'_{\bdd}\\
                    \tau^\circ\subset\sigma^\circ}}(-1)^{\dim(\tau)-1}\biggr)\biggl(\sum_{1\leq j\leq r_{\sigma}} \bigl\{E^{(j)}_{\sigma}\bigr\}_{\mathrm{sb}}\biggr)
                \]
                
                Now, we use the notation in \cite[$\S3$]{NPS16}. 
                Let $\sigma\in\Delta''_{\spe}$ be a cone. 
                We can identify with $N'_\Q$ and $N'_\Q\times\{1\}$, so we can regard $\sigma\cap N'_\Q\times\{1\}$ as a $\Q$-rational polyhedron in $N'_\Q$. 
                Similarly, for any $\tau\in\Delta'$ with $\tau^\circ\subset\sigma^\circ$, we can regard $\tau\cap N'_\Q\times\{1\}$ as a $\Q$-rational polyhedron in $N'_\Q$. 
                Moreover, there exists the following disjoint decomposition of $\Q$-rational polyhedrons because $\Delta'$ is a refinement of $\Delta''$:
                \[
                    \sigma^\circ\cap(N'_\Q\times\{1\}) = \coprod_{\substack{\tau\in\Delta'\\
                    \tau^\circ\subset\sigma^\circ}}\tau^\circ\cap(N'_\Q\times\{1\})
                \]
                There exists Euler characteristic $\chi'$ of definable subsets in $N'_\Q$ from \cite[9.6]{HK06}. 
                From \cite[9.6]{HK06}, $\chi'$ has the additivity, so the following equation holds:
                \[
                    \chi'(\sigma^\circ\cap(N'_\Q\times\{1\})) = \sum_{\substack{\tau\in\Delta'\\
                    \tau^\circ\subset\sigma^\circ}}\chi'(\tau^\circ\cap(N'_\Q\times\{1\}))
                \]
                Furthermore, for any $\tau\in\Delta'_{\spe}$, the following equation holds from \cite[3.7]{NPS16}: 
                \begin{equation*}
                    \chi'(\tau^\circ\cap(N'_\Q\times\{1\}))=
                        \begin{cases}
                            (-1)^{\dim(\tau) -1}& \tau\in\Delta'_{\bdd} \\
                            0                   & \tau\notin\Delta'_{\bdd}\\
                        \end{cases}
                \end{equation*}
                Thus, for any $\sigma\in\Delta''_{\spe}\cap{}_f\Delta''$, there exists the following equation:
                \begin{align*}
                    \sum_{\substack{\tau\in\Delta'_{\bdd}\\
                    \tau^\circ\subset\sigma^\circ}}(-1)^{\dim(\tau)-1}&= \sum_{\substack{\tau\in\Delta'_{\bdd}\\
                    \tau'^\circ\subset\sigma^\circ}}\chi'(\tau^\circ\cap(N'_\Q\times\{1\}))\\
                    &= \sum_{\substack{\tau\in\Delta'_{\spe}\\
                    \tau'^\circ\subset\sigma^\circ}}\chi'(\tau^\circ\cap(N'_\Q\times\{1\}))\\
                    &= \chi'(\sigma^\circ\cap(N'_\Q\times\{1\}))
                \end{align*}
                Similarly, for any $\sigma\in\Delta''_{\spe}$, the following equation holds from \cite[3.7]{NPS16}:
                \begin{equation*}
                    \chi'(\sigma^\circ\cap(N'_\Q\times\{1\}))=
                        \begin{cases}
                            (-1)^{\dim(\sigma) -1} & \sigma\in\Delta''_{\bdd} \\
                            0                     & \sigma\notin\Delta''_{\bdd}\\
                        \end{cases}
                \end{equation*}
                Thus, the following equation holds:
                \[
                    \sum_{1\leq i\leq r}\VOL\biggl(\bigl\{H^\circ_{Y_{\Kt}, f_i}\bigr\}_{\mathrm{sb}}\biggr) = \sum_{\substack{\sigma\in\Delta''_{\bdd}\\ 
                    \sigma\in{}_f\Delta''}}(-1)^{\dim(\sigma)-1}\biggl(\sum_{1\leq j\leq r_{\sigma}} \bigl\{E^{(j)}_{\sigma}\bigr\}_{\mathrm{sb}}\biggr)
                \]
        		\end{enumerate}
        \end{proof}
        In \cite[Proposition 7.7]{Y23} and Proposition\ref{prop: neo irreducible computation}, we assume that $f$ is irreducible. 
        In the following theorem, we extract and summarize the conditions under which the above proposition holds.
        \begin{theorem}\label{thm: general computation}
            We keep the notation in \cite[Proposition 7.7]{Y23}. 
            Let $\Delta''$ be a strongly convex rational polyhedral fan in $(N'\oplus\ZZ)_\RR$ such that $\pi^1$ is compatible with the fans $\Delta''$ and $\Delta\times\Delta_!$, and $\supp(\Delta'') = (\pi^1_\RR)^{-1}(\supp(\Delta\times\Delta_!))$. 
            we assume that
            \begin{enumerate}
                \item A fan $\Delta'$ is a refinement of $\Delta''$ and $f$ is fine for $\Delta''$
                \item A function $f$ is nondegenerate for $\Delta'$. 
                \item A fan $\Delta'$ is compactly arranged, generically unimodular, and specifically reduced. 
                \item Every irreducible component of $H^\circ_{W, f}$ dominates $\Gm^1$. 
                \item A scheme $Z$ is proper over $k$.
            \end{enumerate}
            Let $\{F_\lambda\}_{\lambda\in\Lambda}$ be all connected components of $\HH^\circ$, $W'$ denote a mock toric variety induced by $Z^1$ along $\pi^1_*\colon X(\Delta'')\rightarrow X(\Delta\times\Delta_!)$ and $s^1$, $\{E^{(j)}_{\sigma}\}_{1\leq j\leq r_\sigma}$ denote all connected components of $H_{W', f}\cap W'_\sigma$ for $\sigma\in\Delta''$. 
            Then the following equation holds:
            \[
                \sum_{\lambda\in\Lambda}\VOL\biggl(\bigl\{F_\lambda\bigr\}_{\mathrm{sb}}\biggr) = \sum_{\substack{\sigma\in\Delta''_{\bdd}\\ 
                \sigma\in{}_f\Delta''}}(-1)^{\dim(\sigma)-1}\biggl(\sum_{1\leq j\leq r_\sigma} \bigl\{E^{(j)}_{\sigma}\bigr\}_{\mathrm{sb}}\biggr)
            \]
        \end{theorem}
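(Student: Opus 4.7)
The plan is to reduce to Proposition~\ref{prop: neo irreducible computation}(f) by decomposing $f$ into irreducible factors, applying the irreducible case to each factor, and then summing. Write $f = f_1 \cdot f_2 \cdots f_s$ as a product of irreducible Laurent polynomials (up to units in $\Kt$). The hypothesis that every irreducible component of $H^\circ_{W,f}$ dominates $\Gm^1$ then ensures that the connected components $\{F_\lambda\}_{\lambda \in \Lambda}$ of $\HH^\circ$ are in bijection with the generic-fiber hypersurfaces $\{H^\circ_{Y_{\Kt}, f_i}\}_{1 \le i \le s}$ coming from the factors, so that the left-hand side of the desired formula equals $\sum_{i} \VOL(\{H^\circ_{Y_{\Kt}, f_i}\}_{\mathrm{sb}})$.

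Next, I would verify that each factor $f_i$ inherits the hypotheses needed to invoke Proposition~\ref{prop: neo irreducible computation}(f). Fineness for $\Delta''$ and nondegeneracy for $\Delta'$ restrict to each irreducible factor of $f$ by general properties of Newton polytopes under products (each face polynomial factors, and nondegeneracy of the product gives nondegeneracy of each factor on the relevant faces). The assumptions (3)--(5) are on $\Delta'$, $Z$, and the domination condition, which are unchanged when we restrict attention to a particular factor. Once this is checked, Proposition~\ref{prop: neo irreducible computation}(f) applies to each $f_i$ and yields, for the appropriate strata $\{E^{(i,j)}_{\sigma}\}_{1\le j\le r^{(i)}_\sigma}$ on the mock toric variety $W'$ associated with $f_i$,
\[
    \VOL\bigl(\{H^\circ_{Y_{\Kt}, f_i}\}_{\mathrm{sb}}\bigr) = \sum_{\substack{\sigma\in\Delta''_{\bdd}\\ \sigma\in{}_{f_i}\Delta''}}(-1)^{\dim(\sigma)-1}\sum_{1\le j\le r^{(i)}_\sigma}\bigl\{E^{(i,j)}_{\sigma}\bigr\}_{\mathrm{sb}}.
\]

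Summing over $i$ and matching strata, I would use that on the right-hand side $\sigma \in {}_f\Delta''$ if and only if $\sigma \in {}_{f_i}\Delta''$ for some $i$, and that on the $W'_\sigma$-torus, the intersection $H_{W',f} \cap W'_\sigma$ decomposes as the disjoint union over $i$ of the $H_{W',f_i} \cap W'_\sigma$ (because nondegeneracy of $f$ for $\Delta''$ forces these hypersurfaces to be pairwise disjoint on each orbit $W'_\sigma$ with $\sigma \in {}_f\Delta''$, equivalently smoothness along orbits precludes them sharing components). Consequently, the connected components $\{E^{(j)}_\sigma\}_{1\le j\le r_\sigma}$ of $H_{W',f}\cap W'_\sigma$ are the disjoint union over $i$ of the $\{E^{(i,j)}_\sigma\}$, so the right-hand sides add up to the claimed formula.

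The main obstacle I anticipate is the bookkeeping in the last step: carefully justifying that the connected-component decomposition of $H_{W',f} \cap W'_\sigma$ is compatible with the factorization $f = \prod f_i$ on each torus orbit of $W'$. This needs the nondegeneracy hypothesis in a crucial way — without it, two factors $f_i, f_{i'}$ could meet along an orbit and the pieces could recombine into strata different from those supplied by the individual factors. Once this compatibility is established, the rest of the argument is a straightforward reindexing of the sums produced by Proposition~\ref{prop: neo irreducible computation}(f).
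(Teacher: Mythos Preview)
Your proposal is correct and follows essentially the same approach as the paper: factor $f$ into irreducibles, verify that each factor inherits fineness for $\Delta''$ and nondegeneracy for $\Delta'$, apply Proposition~\ref{prop: neo irreducible computation}(f) to each factor, and then reassemble using the fact that $H_{W',f}\cap W'_\sigma$ splits as the disjoint union of the $H_{W',f_i}\cap W'_\sigma$. One small correction: the factorization should be taken in $k[W^\circ]$ (over $k$), not ``up to units in $\Kt$''; the paper factors $f=h_1\cdots h_n$ in $k[W^\circ]$ and then passes to $\Kt$ componentwise. The inheritance of nondegeneracy and fineness by the factors, and the disjointness of the pieces on each $W'_\sigma$, are in the paper handled by citing \cite[Proposition~6.12(a),(b),(c)]{Y23} rather than appealing to ``general properties of Newton polytopes,'' but this is exactly the content you anticipated as the main bookkeeping step.
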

        \begin{proof}
            From \cite[Proposition 6.14(e)]{Y23}, $f$ is non-degenerate for $\Delta''$. 
            Let $f = h_1\cdots h_n$ be an irreducible decomposition of $f\in k[W^\circ]$. 
            Because $f$ is non-degenerate for $\Delta'$, $h_b$ is non-degenerate for $\Delta'$ too from \cite[Proposition 6.12(b)]{Y23} for any $1\leq b\leq n$. 
            Moreover, from \cite[Proposition 6.12(c)]{Y23}, there exist a following decomposition of $H_{W, f}$:
            \[
                H_{W, f} = \coprod_{1\leq b\leq n} H_{W, h_b}
            \]
            In particular, $\HH^\circ$ can be decomposed as follows:
            \[
                \coprod_{1\leq b\leq n} (H^\circ_{W, h_b}\times_{\Gm}\Spec(\Kt))
            \]
            We remark that $H^\circ_{W, f}$ is smooth over $k$. 
            In particular, it is reduced. 
            Thus, for any $1\leq b\leq n$, $H^\circ_{W, h_b}$ is reduced. 
            Moreover, from \cite[Proposition 6.12(a)]{Y23}, each $h_b$ is fine for $\Delta''$. 
            Therefore, for each $h_b$, we can compute the summation of the stable birational volume of irreducible components of $H^\circ_{W, h_b}\times_{\Gm^1}\Spec(\Kt)$ for the refinement $\Delta'$ of $\Delta''$ like as Proposition \ref{prop: neo irreducible computation}(f). 
           Moreover, for any $\sigma\in\Delta''_{\spe}$, the following equation holds from the proof of \cite[Proposition 6.12(b)]{Y23}:
            \begin{align*}
                H_{W', f}\cap W'_{\sigma} &= \coprod_{1\leq b\leq n} (H_{W', h_b}\cap W'_{\sigma})
            \end{align*}
            Thus, from the above disjoint decomposition, the statement holds. 
        \end{proof}
\section{Mock polytopes of the mock toric variety}
        Let $Z$ be a scheme over $k$ and ($N$, $\Delta$, $\iota$, $\Phi$, $\{N_\varphi\}_{\varphi\in\Phi}$, $\{\Delta_\varphi\}_{\varphi\in\Phi}$) be a mock toric structure of $Z$. 
        In the previous section, we considered the stable birational volume of hypersurfaces of a mock toric variety. 
        However, it isn't easy to apply Theorem\ref{thm: general computation} because of the following three reasons. 
        \begin{itemize}
            \item To find a strongly convex rational polyhedral fan $\Delta''$ that $f$ is fine for $\Delta''$.
            \item To check $f$ is nondegenerate for $\Delta'$.
            \item To find a refinement $\Delta'$ of $\Delta''$ that $\Delta'$ is compactly arranged, generically unimodular, and specifically reduced. 
        \end{itemize}
        For the first point, Collory\cite[Corollary 6.9]{Y23} might be helpful, but calculating the fan associated with $f$ is extremely difficult. 
        Furthermore, for the second point, if an intersection of a hypersurface and a single stratum is not smooth, then the calculation becomes impossible at that stage. 
        Similarly, for the third point, no fan satisfies "specifically reduced" in general. 

        For the first and second points, we adopt the approach already taken in \cite{NO22}. 
        In more detail, consider the finite set consisting of units in $k[Z^\circ]$ and focus on their linear system $D$. 
        These linear systems correspond to what is referred to as the Newton polytope in the argument of toric varieties. 
        In \cite{NO22}, they constructed $\Delta''$ that a general $f\in D$ is fine for $\Delta''$ from Neton polytope associated with $D$. 
        Moreover, they showed that a general $f\in D$ is non-degenerate for $\Delta''$ from Bertini's theorem. 
        In this section, we consider the applicability of the method outlined above to the context of mock toric varieties. 
        In the latter part of the section, we explain ways of avoiding the third issue, which geometrically corresponds to semistable reduction.
        \vskip\baselineskip
        In this section, we use the notation in \cite[Sections 6 \& 7]{Y23} and the following definition. 
        \begin{definition}\label{def: finite unit set}
            We itemize the notation as follows:
            \begin{itemize}
                \item Let $k$ be an infinite algebraically closed field with $\charac(k) = 0$. 
                \item Let $S$ be a finite set.
                \item Let $\mathcal{U}_{Z, S}$ denote a set which consists of all map from $S$ to $k[Z^\circ]^*$. 
                \item We define a relation $\sim$ of $\mathcal{U}_{Z, S}$ as follows:  
                For $u_1, u_2\in\mathcal{U}_{Z, S}$, if there exists $\chi\in k[Z^\circ]^*$ such that $\chi u_1(i) = u_2(i)$ for any $i\in S$, then we define $u_1\sim u_2$. 
                We can check that this relation $\sim$ is an equivalence relation of $\mathcal{U}_{Z, S}$.
                \item For $u\in\mathcal{U}_{Z, S}$, let $[u]$ denote the equivalence class of $u$ along the relation $\sim$. 
                \item For $u\in\mathcal{U}_{Z, S}$, let $V_u$ denote a $k$-lenear subspace of $k[Z^\circ]$ generated by $\{u(i)\}_{i\in S}$, and $d_u$ denote $\dim_k(V_u)$. 
            \end{itemize}
        \end{definition}
        The following proposition is immediately evident from Definition\ref{def: finite unit set}
        \begin{proposition}\label{prop: dimension of polytope}
            We use the notation in Definition \ref{def: finite unit set}.  
            Let $u_1$ and $u_2\in \mathcal{U}_{Z, S}$ be maps. 
            If $u_1\sim u_2$, then $d_{u_1} = d_{u_2}$. 
        \end{proposition}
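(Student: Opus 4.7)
The plan is to exhibit an explicit $k$-linear isomorphism $V_{u_1} \xrightarrow{\sim} V_{u_2}$ induced by multiplication by the unit $\chi$ witnessing $u_1 \sim u_2$. Since $k[Z^\circ]$ is a $k$-algebra and $\chi \in k[Z^\circ]^*$, the multiplication map $m_\chi \colon k[Z^\circ] \to k[Z^\circ]$, $f \mapsto \chi f$, is $k$-linear with two-sided inverse $m_{\chi^{-1}}$, hence a $k$-linear automorphism of $k[Z^\circ]$.

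First I would observe that by definition $V_{u_1}$ is the $k$-span of $\{u_1(i)\}_{i \in S}$ and $V_{u_2}$ is the $k$-span of $\{u_2(i)\}_{i \in S}$. By the relation $\sim$, we have $m_\chi(u_1(i)) = \chi \cdot u_1(i) = u_2(i)$ for every $i \in S$. Consequently, $m_\chi$ sends a generating set of $V_{u_1}$ onto a generating set of $V_{u_2}$, so $m_\chi(V_{u_1}) = V_{u_2}$. Symmetrically, $m_{\chi^{-1}}(V_{u_2}) = V_{u_1}$, so $m_\chi$ restricts to a $k$-linear isomorphism $V_{u_1} \xrightarrow{\sim} V_{u_2}$. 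Taking dimensions yields $d_{u_1} = \dim_k(V_{u_1}) = \dim_k(V_{u_2}) = d_{u_2}$.

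There is no real obstacle here: the content of the statement is simply that a linear automorphism of an ambient $k$-vector space preserves the dimension of any subspace it maps isomorphically onto another, and this is formal from the definitions. The only thing one has to check is that the chosen $\chi$ is genuinely a \emph{unit} (so that $m_\chi$ is invertible rather than merely injective, though injectivity would already suffice in this setting since $k[Z^\circ]$ is a domain); this is guaranteed because $\chi \in k[Z^\circ]^*$ by the very definition of the relation $\sim$.
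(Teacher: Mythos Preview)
Your proof is correct and follows essentially the same approach as the paper: both argue that multiplication by the unit $\chi$ is a $k$-linear automorphism of $k[Z^\circ]$ carrying $V_{u_1}$ onto $V_{u_2}$, hence preserving dimension. Your version simply spells out more of the routine details.
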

        \begin{proof}
            For any $\chi$, the $k$-linear endmorphism of $k[Z^\circ]$ defined by multiplication of $\chi$ is an isomorphism. 
            Thus, $\dim(V_{u_1}) = \dim(V_{u_2})$. 
        \end{proof}
        In the following definition, we define a mock polytope, an analog of a Newton polytope. 
        \begin{definition}\label{def:mock polytope}
            We use the notation in Definition \ref{def: finite unit set}.  
            Let $u\in\mathscr{U}_{Z, S}$ be a map. 
            Then we call the tuple $(Z, S, u)$ a mock polytope of $Z$. 
        \end{definition}
        In Section 6, we construct the map $\val_f\colon \supp(\Delta)\rightarrow \RR$ for $f\in k[Z^\circ]$. 
        From now on, we construct the map $\val_u\colon \supp(\Delta)\rightarrow \RR$ for $u\in \mathcal{U}_{Z, S}$.
        \begin{definition}\label{def:val function of mock polytope}
            Let $(Z, S, u)$ be a mock polytope of $Z$. 
            Let $\val_u$ denote a map from $\supp(\Delta)$ to $\RR$ defined as $\val_u(v) = \min_{i\in S} \{\val_{u(i)}(v)\}$ for $v\in\supp(\Delta)$. 
            For a subset $A\subset\supp(\Delta)$, let $\val^A_u$ denote the restriction $\val_u|_A$. 
        \end{definition}
        We consider the basic properties of $\val_u$ in the following proposition. 
        \begin{proposition}\label{prop: VoPP1}
           We use the notation in Definition \ref{def:val function of mock polytope}. 
           Then the following statement holds:
           \begin{enumerate}
               \item[(a)] The map $\val_u$ is continuous and is preserved by the scalar product of non-negative real numbers. 
               \item[(b)] Let $\sigma\in\Delta$ be a cone and $\varphi\in\Phi$ be an element such that $\sigma\in\Delta_\varphi$. 
               Let $\sigma_\varphi$ denote $(q_\varphi)_\RR(\sigma)$. 
               Then there exists $g_\varphi\in k[M_\varphi]$ such that the following equation holds:
               \[
                    \val^{\sigma}_u = \val^{\sigma_\varphi}_{g_\varphi}\circ(q_\varphi)_\RR|_{\sigma}
               \]
           \end{enumerate}
        \end{proposition}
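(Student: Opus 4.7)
The plan is to reduce both parts to the analogous facts for a single element $f\in k[Z^\circ]$ already established in [Y23, Section~6], leveraging the defining identity $\val_u=\min_{i\in S}\val_{u(i)}$.

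For part (a), I would invoke the continuity and positive homogeneity of each $\val_{u(i)}$ on $\supp(\Delta)$ from [Y23, Section~6]. Since $S$ is finite, both properties pass to the pointwise minimum: continuity is the standard fact that a finite minimum of continuous real-valued functions is continuous, while $\val_u(\lambda v)=\lambda\val_u(v)$ for $\lambda\in\Rp$ follows because multiplication by a non-negative scalar commutes with $\min$. This step is essentially immediate.

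For part (b), I would first apply the single-function factorisation from [Y23, Section~6] to each $u(i)\in k[Z^\circ]^*$ to obtain $g_{\varphi,i}\in k[M_\varphi]$ with $\val^\sigma_{u(i)}=\val^{\sigma_\varphi}_{g_{\varphi,i}}\circ(q_\varphi)_\RR|_\sigma$. I would then set $g_\varphi:=\sum_{i\in S}a_i\,g_{\varphi,i}$ for generic coefficients $(a_i)\in(k^*)^S$. Expanding in the monomial basis of $k[M_\varphi]$, the coefficient of each $\chi^m$ in $g_\varphi$ is a linear form in the $a_i$, so for $(a_i)$ outside a finite union of hyperplanes no cancellation occurs and $\supp(g_\varphi)=\bigcup_i\supp(g_{\varphi,i})$; consequently $\val^{\sigma_\varphi}_{g_\varphi}(w)=\min_i\val^{\sigma_\varphi}_{g_{\varphi,i}}(w)$ on $\sigma_\varphi$, and precomposing with $(q_\varphi)_\RR|_\sigma$ yields the claimed identity $\val^\sigma_u=\val^{\sigma_\varphi}_{g_\varphi}\circ(q_\varphi)_\RR|_\sigma$.

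The main subtlety I expect is this genericity argument: one must verify that the hyperplanes to avoid in $k^{|S|}$ are indexed by the finite set $\bigcup_i\supp(g_{\varphi,i})$ and then invoke infinitude of $k$ (part of the standing hypothesis on $k$) to choose an admissible $(a_i)$. An alternative route, available if [Y23, Section~6] shows that $g_{\varphi,i}$ can be taken to be a single monomial $c_i\chi^{\tilde m_i}$ when $u(i)$ is a unit, would be to set $g_\varphi:=\sum_i\chi^{\tilde m_i}$ indexed by the distinct exponents; there $\charac(k)=0$ rules out accidental vanishing of coefficients and the Newton-polytope description of $\val^{\sigma_\varphi}_{g_\varphi}$ is immediate.
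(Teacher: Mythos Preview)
Your proposal is correct. Part (a) matches the paper's argument exactly. For part (b), the paper takes precisely your ``alternative route'': since each $u(i)\in k[Z^\circ]^*$ is a unit, \cite[Proposition~6.3]{Y23} produces a single character $\omega_i\in M_\varphi$ with $\val^\sigma_{u(i)}=\val^\sigma_{\iota_\varphi^*(\chi^{\omega_i})}$, and the paper then sets $g_\varphi=\sum_{\omega\in T}\chi^\omega$ where $T=\{\omega_i:i\in S\}$ is the set of distinct exponents. Because the monomials $\chi^\omega$ for $\omega\in T$ are linearly independent, no cancellation can occur and the torus-invariant valuation of $g_\varphi$ is immediately the minimum of the monomial valuations; the identity is first checked on $\sigma\cap N$ and then extended to all of $\sigma$ via part (a) and \cite[Lemma~8.13]{Y23}.

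Your primary approach via generic coefficients $(a_i)\in(k^*)^S$ also works and is a valid proof, but it is more elaborate than necessary here: the key simplification the paper exploits is that units pull back to monomials, so one never needs a genericity argument to control supports. (Your remark that $\charac(k)=0$ is needed in the alternative route is superfluous once you index by \emph{distinct} exponents: each coefficient in $g_\varphi$ is then exactly $1$.) The generic-coefficient method would become genuinely useful if the $u(i)$ were arbitrary nonzero elements rather than units.
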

        \begin{proof}
            We prove the statements from (a) to (b). 
            \begin{enumerate}
                \item[(a)] For each $i\in S$, $\val_{u(i)}$ is a continuous map and is preserved by the scalar product of non-negative real numbers from \cite[Proposition 6.7(a)]{Y23}. 
                Thus, $\val_u$ also has these properties from the definition of $\val_u$. 
                \item[(b)] 
                From \cite[Proposition 6.3]{Y23}, for each $i\in S$, there exists $\omega_i\in M_\varphi$ such that $\val^\sigma_{u(i)} = \val^\sigma_{\iota^*_\varphi(\chi^{\omega_i})}$. 
                Let $T$ denote a subset $\{\omega_i\mid i\in S\}\subset M_\varphi$. 
                Let $g_\varphi$ denote $\sum_{\omega\in T} \chi^{\omega}\in k[M_\varphi]$. 
                Then for any $v\in \sigma\cap N$, we have $q_\varphi(v)(g_\varphi) = \min_{\omega\in T}\{q_\varphi(v)(\chi^\omega)$\} because the definition of the torus invariant valuation on toric varieties. 
                This shows that for any $v\in\sigma\cap N$, there exists the following equation:
                \begin{align*}
                    \val^{\sigma_\varphi}_{g_\varphi}\circ(q_\varphi)_\RR(v)&= q_\varphi(v)(g_\varphi)\\
                    &= \min_{\omega\in T}\{q_\varphi(v)(\chi^\omega)\}\\
                    &= \min_{i\in S}\{q_\varphi(v)(\chi^{\omega_i})\}\\
                    &= \min_{i\in S}\{\val_Z(v)(\iota^*_\varphi(\chi^{\omega_i}))\}\\
                    &= \min_{i\in S}\{\val^\sigma_{\iota^*_\varphi(\chi^{\omega_i})}(v)\}\\
                    &= \min_{i\in S}\{\val^\sigma_{u(i)}(v)\}\\
                    &= \val^\sigma_u(v)
                \end{align*}
                Thus, from (a) and \cite[Lemma 8.13]{Y23}, the statement holds.
            \end{enumerate}
        \end{proof}
        Similar to \cite[Proposition 6.8]{Y23}, $\val_u$ provides a refinement of $\Delta$.
        We define the notation in Proposition \ref{def: refinement by mock polytope} and prove that the set of cones is a refinement of $\Delta$ in Proposition \ref{prop: VoPP2}.  
        \begin{definition}\label{def: refinement by mock polytope}
            Let $(Z, S, u)$ be a mock polytope and $\sigma\in\Delta$ be a cone. 
            \begin{itemize}
                \item Let $\Omega^\sigma_u$ be a set defined as follows:
                \[
                    \Omega^\sigma_u = \{(\chi, b)\in k[Z^\circ]^*\times \ZZ_{>0}\mid\val_u(v)\leq\frac{1}{b}\val_\chi(v)\quad(\forall v\in \sigma)\}
                \]
                \item For $(\chi, b)\in \Omega^\sigma_u$, let $C^\sigma_u(\chi, b)$ be a set defined as follows:
                \[
                    C^\sigma_u(\chi, b) = \{v\in\sigma\mid\val_u(v)=\frac{1}{b}\val_\chi(v)\}
                \]
            \end{itemize}
        \end{definition}
        \begin{proposition}\label{prop: VoPP2}
            We keep the notation in Definition \ref{def: refinement by mock polytope}.  
            Then the following statements follow:
            \begin{enumerate}
                \item[(a)] The set $\{C^\sigma_u(\chi, b)\}_{(\chi, b)\in\Omega^\sigma_u}$ is a strongly convex rational polyhedral fan and a refinement of $\sigma$. 
                \item[(b)] Let $\sigma$ and $\tau\in\Delta$ be cones  such that $\tau\preceq\sigma$. 
                Then $\Omega^\sigma_u\subset\Omega^\tau_u$. 
                Moreover, $C^\sigma_u(\chi, b)\cap\tau = C^\tau_u(\chi, b)$ for any $(\chi, b)\in\Omega^\sigma_u$. 
                In particular, $C^\tau_u(\chi, b)\preceq C^\sigma_u(\chi, b)$.  
                \item[(c)] The following set $\Delta_u$ is a strongly convex rational polyhedral fan in $N_\RR$ and a refinement of $\Delta$:
                \[
                    \Delta_u = \bigcup_{\sigma\in\Delta}\{C^\sigma_u(\chi, b)\}_{(\chi, b)\in\Omega^\sigma_u}
                \]
                \item[(d)] For any $(\chi, b)\in\Omega^\sigma_u$, there exists $\chi'\in k[Z^\circ]^*$ such that $\val_u(v) = \val_{\chi'}(v)$ for any $v\in C^\sigma_u(\chi, b)$. 
                \item[(e)] Let $u_1$ and $u_2\in \mathscr{U}_{Z, S}$ be maps. 
                If $u_1\sim u_2$, then $\Delta_{u_1} = \Delta_{u_2}$.
            \end{enumerate}
        \end{proposition}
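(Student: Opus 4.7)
The plan is to reduce each part to the toric case, exploiting Proposition \ref{prop: VoPP1}(b): on each $\sigma \in \Delta_\varphi$, the map $\val_u$ factors as $\val^{\sigma_\varphi}_{g_\varphi} \circ (q_\varphi)_\RR$ for some Laurent polynomial $g_\varphi \in k[M_\varphi]$. Thus the local structure of $\val_u$ on $\sigma$ is governed by the Newton polytope of $g_\varphi$, and the refinement $\{C^\sigma_u(\chi, b)\}$ should morally be the pullback of the cones of the normal fan of $\mathrm{Newt}(g_\varphi) \cap \sigma_\varphi$ along $(q_\varphi)_\RR$.

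For (a), I fix $\sigma \in \Delta_\varphi$ and observe that by \cite[Proposition 6.3]{Y23}, each $\val_\chi$ with $\chi \in k[Z^\circ]^*$ restricts on $\sigma$ to the pullback of a linear form $\omega \in M_\varphi$. Consequently, the condition $(\chi, b) \in \Omega^\sigma_u$ translates into the condition that the linear form $\tfrac{1}{b}\omega$ (where $\omega$ corresponds to $\chi$) lies on or above the lower envelope of $g_\varphi$ over $\sigma_\varphi$, and $C^\sigma_u(\chi, b)$ is the pullback of the face of $\sigma_\varphi$ where this lower envelope equals $\tfrac{1}{b}\omega$. Allowing $b$ to range over $\ZZ_{>0}$ captures all rational slopes of the piecewise-linear function $\val^{\sigma_\varphi}_{g_\varphi}$, so the collection $\{C^\sigma_u(\chi, b)\}_{(\chi, b)\in\Omega^\sigma_u}$ recovers precisely the refinement of $\sigma_\varphi$ induced by the domains of linearity of $\val^{\sigma_\varphi}_{g_\varphi}$, pulled back to $\sigma$. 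This is a strongly convex rational polyhedral fan refining $\sigma$.

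Parts (b) and (d) are essentially immediate. For (b), the inequality $\val_u(v) \leq \tfrac{1}{b}\val_\chi(v)$ on $\sigma$ restricts verbatim to $\tau$, giving $\Omega^\sigma_u \subset \Omega^\tau_u$, and $C^\sigma_u(\chi, b) \cap \tau = C^\tau_u(\chi, b)$ follows by the same restriction; the face relation $C^\tau_u(\chi, b) \preceq C^\sigma_u(\chi, b)$ is then standard for polyhedral intersections. For (d), on $C^\sigma_u(\chi, b)$ the piecewise-linear function $\val_u = \min_{i\in S} \val_{u(i)}$ is linear, so its minimum is attained by a single $i_0 \in S$ on all of the cone (by closedness of the loci where each $u(i)$ achieves the minimum, together with the cone being a maximal cell of linearity); one then takes $\chi' = u(i_0)$. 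For (c), given (a) and (b), checking that $\Delta_u$ is a fan reduces to verifying that for $\sigma_1, \sigma_2 \in \Delta$ with common face $\tau$, any two cones $C^{\sigma_j}_u(\chi_j, b_j)$ meet in a cone of the refinement of $\tau$, which is a face of each by (b). Since each $C^\sigma_u(\chi, b)$ is contained in $\sigma$ and $\Delta$ is a fan, the whole collection $\Delta_u$ fits together as a strongly convex rational polyhedral fan refining $\Delta$.

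For (e), if $u_1 \sim u_2$ with $u_2(i) = \chi \, u_1(i)$ for some $\chi \in k[Z^\circ]^*$, then $\val_{u_2}(v) = \val_\chi(v) + \val_{u_1}(v)$ for all $v$. A direct check then shows that $(\chi', b) \in \Omega^\sigma_{u_1}$ if and only if $(\chi' \chi^b, b) \in \Omega^\sigma_{u_2}$, and that $C^\sigma_{u_1}(\chi', b) = C^\sigma_{u_2}(\chi' \chi^b, b)$, so $\Delta_{u_1} = \Delta_{u_2}$. The main obstacle is (a): one must rigorously match the combinatorial definition of $\Omega^\sigma_u$ and $C^\sigma_u(\chi, b)$ with the normal-fan decomposition of the Newton polytope of $g_\varphi$, in particular handling the role of the denominator $b$, which is precisely what allows rational-slope hyperplanes to appear as walls of the refinement. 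Once this identification is made, (b)--(e) are formal consequences.
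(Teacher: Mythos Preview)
Your proposal is correct and follows essentially the same strategy as the paper. The only difference is one of packaging: where you pass to $g_\varphi \in k[M_\varphi]$ via Proposition~\ref{prop: VoPP1}(b) and then redo the Newton-polytope/normal-fan analysis on $\sigma_\varphi$, the paper instead observes that Proposition~\ref{prop: VoPP1}(b) yields an $f \in k[Z^\circ]$ with $\val^\sigma_u = \val^\sigma_f$, whence $\Omega^\sigma_u = \Omega^\sigma_f$ and $C^\sigma_u(\chi,b) = C^\sigma_f(\chi,b)$ identically, so that (a), (c), (d) are literally the already-proven \cite[Proposition~6.8(c),(e),(f)]{Y23} applied to $f$, and (b) is checked exactly as in \cite[Proposition~6.8(d)]{Y23}. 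This shortcut spares you the ``main obstacle'' you flag in (a), since all of that polyhedral matching is already encoded in \cite[Proposition~6.8]{Y23}. Your argument for (e) is identical to the paper's, and your direct argument for (d) (taking $\chi' = u(i_0)$ for an index achieving the minimum on the cone) is a valid alternative to the paper's reduction to \cite[Proposition~6.8(f)]{Y23}.
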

        \begin{proof}
            We prove the statements from (a) to (e) in order. 
            \begin{enumerate}
                \item[(a)] From Proposition \ref{prop: VoPP1}(b), there exists $f\in k[Z^\circ]$ such that $\val^\sigma_u = \val^\sigma_f$. 
                This shows that $\Omega^\sigma_u = \Omega^\sigma_f$ and for any $(\chi, b)\in \Omega^\sigma_u$, we have $C^\sigma_u(\chi, b) = C^\sigma_f(\chi, b)$. 
                Thus, from \cite[Proposition 6.8(c)]{Y23}, the statement holds. 
                \item[(b)] We can check the statement likewise \cite[Proposition 6.8(d)]{Y23}. 
                \item[(c)] We can check the statement likewise \cite[Proposition 6.8(e)]{Y23}. 
                \item[(d)] We use the notation in the proof of (a).  
                Then $(\chi, b)\in \Omega^\sigma_f$ and $C^\sigma_u(\chi, b) = C^\sigma_f(\chi, b)$. 
                Thus, from \cite[Proposition 6.8(f)]{Y23}, there exists $\chi'\in k[Z^\circ]^*$ such that $\val_f(v) = \val_{\chi'}(v)$ for any $v\in C^\sigma_f(\chi, b)$. 
                Therefore, the statement holds.
                \item[(e)] Let $\eta\in k[Z^\circ]^*$ be a unit such that $u'(i) = \eta u(i)$ for any $i\in S$. 
                Then for any $i\in S$, $\val_{u'(i)} = \val_{\eta} + \val_{u(i)}$ from \cite[Proposition 6.7(b)]{Y23}. 
                This shows that $\val_{u'} = \val_\eta + \val_u$. 
                Let $\sigma\in\Delta$ be a cone. 
                Thus, for any $(\chi, b)\in k[Z^\circ]^*\times \ZZ_{>0}$, we can check that $(\chi, b)\in\Omega^\sigma_u$ if and only if $(\chi\eta^b, b)\in\Omega^\sigma_{u'}$. 
                Moreover, in this case, $C^\sigma_{u'}(\chi\eta^b, b) = C^\sigma_u(\chi, b)$. 
                Therefore, the set $\{C^\sigma_u(\chi, b)\}_{(\chi, b)\in\Omega^\sigma_u}$ is equal to the set $\{C^\sigma_{u'}(\chi', b')\}_{(\chi', b')\in\Omega^\sigma_{u'}}$. 
            \end{enumerate}
        \end{proof}
        From $u\in \mathscr{U}_{Z, S}$, we can define the linear system $D\subset \Gamma(Z^\circ, \OO_Z)$. 
        In Proposition\ref{prop: Bertini for fine}, we show that $\Delta_f = \Delta_u$ for a general $f\in D$. 
        The following lemma is needed to show Proposition\ref{prop: Bertini for fine}. 
        \begin{lemma}\label{lem: preparation for Fine}
            Let $(Z, S, u)$ be a mock polytope. 
            Let $(a_i)_{i\in S}\in k^{|S|}$ be a vector. 
            Let $f\in k[Z^\circ]$ be a function defined as $f = \sum_{i\in S}a_iu(i)$. 
            Then the following statements follow:
            \begin{enumerate}
                \item[(a)] Let $v\in \supp(\Delta)\cap N$ be an element.  
                Then for general $(a_i)_{i\in S}\in k^{|S|}$, we have $\val_f(v) = \val_u(v)$. 
                \item[(b)] Let $\tau\in\Delta_u$ be a cone and $v_0\in\tau^\circ$ be an element. 
                For any $(a_i)_{i\in S}\in k^{|S|}$, if $\val_u(v_0) = \val_f(v_0)$, then $\val^\tau_u = \val^\tau_f$. 
            \end{enumerate}
        \end{lemma}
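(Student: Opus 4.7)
The plan is to reduce both parts to a Laurent-polynomial computation by transporting the valuations to a single toric chart via Proposition~\ref{prop: VoPP1}(b). Fix a cone $\sigma \in \Delta$ containing the relevant point ($v$ for~(a), $v_0$ for~(b)) in its relative interior and pick $\varphi \in \Phi$ with $\sigma \in \Delta_\varphi$. By the argument of that proposition, there exist $\omega_i \in M_\varphi$ with $\val^\sigma_{u(i)}(v') = \omega_i((q_\varphi)_\RR(v'))$ for all $i \in S$ and all $v' \in \sigma$. Writing $T = \{\omega_i\}_{i \in S}$ and $a^*_\omega = \sum_{i : \omega_i = \omega} a_i$, the same reasoning applied to $f = \sum_i a_i u(i)$ should produce the twin formulas
\[
    \val^\sigma_u(v') = \min_{\omega \in T} \omega((q_\varphi)_\RR(v')), \qquad \val^\sigma_f(v') = \min_{\omega \in T,\, a^*_\omega \neq 0} \omega((q_\varphi)_\RR(v')).
\]

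For part~(a), with $v$ fixed, equality $\val_f(v) = \val_u(v)$ holds iff some $\omega \in T$ achieving the minimum for $\val_u(v)$ has $a^*_\omega \neq 0$. Its failure is a system of independent linear equations in $(a_i)_{i \in S}$ (one per minimizing $\omega$, each involving a disjoint set of coordinates), hence a proper linear subspace of $k^{|S|}$; on its Zariski-dense complement (a) holds.

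For part~(b), choose $\sigma$ so that $\tau^\circ \subseteq \sigma^\circ$. By construction of $\Delta_u$ (Proposition~\ref{prop: VoPP2}), the set $J_\tau := \{\omega \in T : \omega((q_\varphi)_\RR(v')) = \val_u(v')\}$ is independent of $v' \in \tau^\circ$. The hypothesis $\val_f(v_0) = \val_u(v_0)$ combined with the formulas above forces some $\omega_0 \in J_\tau$ with $a^*_{\omega_0} \neq 0$; for any $v' \in \tau^\circ$ this same $\omega_0$ witnesses $\val_f(v') = \val_u(v')$. Continuity (Proposition~\ref{prop: VoPP1}(a) for $\val_u$, together with its analog for $\val_f$ from \cite[Proposition~6.7(a)]{Y23}) then extends the equality from $\tau^\circ$ to all of $\tau$.

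The main obstacle is rigorously justifying the toric formula for $\val_f$ itself: since $f$ is a sum of units rather than a monomial, one must argue that the initial forms of the pullbacks $\iota_\varphi^*(\chi^\omega)$ for distinct $\omega \in T$ are ``independent enough'' at each $v$, so that cancellation in $\sum_\omega a^*_\omega \iota_\varphi^*(\chi^\omega)$ modulo higher-valuation terms is governed precisely by the scalars $a^*_\omega$. Via \cite[Proposition~6.3]{Y23} this reduces to the fact that distinct Laurent monomials have distinct weights under any rational linear functional; once this bookkeeping is in place, the short combinatorial arguments for~(a) and~(b) above become immediate.
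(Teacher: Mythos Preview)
Your proposed formula
\[
\val^\sigma_f(v') = \min_{\omega \in T,\; a^*_\omega \neq 0}\ \omega\bigl((q_\varphi)_\RR(v')\bigr)
\]
is not justified, and the ``obstacle'' you flag is a real gap rather than routine bookkeeping. Proposition~\ref{prop: VoPP1}(b) (via \cite[Proposition 6.3]{Y23}) only tells you that $\val^\sigma_{u(i)} = \val^\sigma_{\iota^*_\varphi(\chi^{\omega_i})}$ as functions on $\sigma$; it does \emph{not} say that $u(i)$ is a scalar multiple of $\iota^*_\varphi(\chi^{\omega_i})$. The ratio $u(i)/\iota^*_\varphi(\chi^{\omega_i})$ is a unit of $k[Z^\circ]$ whose valuation vanishes on $\sigma$, but in the mock toric setting such a unit need not be a constant. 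Hence $f=\sum_i a_i u(i)$ is not, in any evident way, the pullback of a Laurent polynomial in $k[M_\varphi]$, and the quantity $a^*_\omega=\sum_{i:\omega_i=\omega}a_i$ does not control the cancellation. (Compare Lemma~\ref{lem: supplement-of-valuation3}, where exactly the hypothesis $u(i)=c_i\,{\iota'}^*(\chi^{\omega_i})$ is imposed as an \emph{extra} assumption to make this kind of computation go through.) Your reduction ``distinct Laurent monomials have distinct weights'' addresses linear independence of the $\chi^\omega$, but that is not the issue: the issue is that $f$ does not live in the span of the $\iota^*_\varphi(\chi^\omega)$ in the first place.

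The paper's proof avoids this entirely. For~(a) it works directly with the valuation $\val_Z(v)$: the set $E(v,n)=\{(a_i):\val_Z(v)(\sum_i a_i u(i))\ge n\}$ is a $k$-linear subspace because $\val_Z(v)$ is a valuation trivial on $k$; taking $n=\val_u(v)+1$ and observing that the standard basis vector $e_j$ (for any $j$ realizing the minimum $\val_{u(j)}(v)=\val_u(v)$) lies outside it shows properness. No chart and no formula for $\val_f$ are needed. For~(b) the paper uses a convexity argument: on $\tau$ the function $\val^\tau_u$ is linear (it equals $\val^\tau_\eta$ for some unit $\eta$ by Proposition~\ref{prop: VoPP2}(d) and \cite[Proposition 6.7(d)]{Y23}), while $\val^\tau_f$ is upper convex by \cite[Proposition 6.7(c)]{Y23}. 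Since $\val^\tau_f-\val^\tau_u$ is non-negative, upper convex, and vanishes at the interior point $v_0$, it vanishes identically on $\tau$. This argument never needs to identify which ``monomials'' survive in $f$, so it works uniformly for mock toric varieties.
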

        \begin{proof}
            We prove the statements from (a) to (b).
            \begin{enumerate}
                \item[(a)] For $n\in\ZZ$, let $E(v,n)$ denote a subset of $k^{|S|}$ defined as follows:
                \[
                    E(v, n) = \{(a_i)_{i\in S}\in k^{|S|}\mid\val_Z(v)(\sum_{i\in S}a_iu(i))\geq n\}
                \]
                Because $\val_Z(v)$ is a valuation which is trivial on $k$, $E(v, n)$ is a linear subspace of $k^{|S|}$. 
                In particular, $E(v, \val_u(v)) = k^{|S|}$ because of the property of a valuation and the definition of $\val_u$. 

                On the other hand, from the definition of $\val_u$, there exists $j\in S$ such that $\val_u(v) = \val_{u(j)}(v)$. 
                Let $e_j$ denotes $(\delta_{i, j})_{i\in S}\in k^{|S|}$, where $\delta_{\cdot, \cdot}$ is Kronecker's delta. 
                Thus, $e_j\notin E(v, \val_u(v) + 1)$. 
                
                Therefore, $E(v, \val_u(v)+1)$ is a strict linear sub space of $k^{|S|}$. 
                In particular, for general $(a_i)_{i\in S}\in k^{|S|}$, $\val_f(v) = \val_u(v)$.
                \item[(b)] From the definition of $\Delta_u$, there exists $\sigma\in\Delta$ and $(\chi, b)\in \Omega^\sigma_u$ such that $\tau = C^\sigma_u(\chi, b)$.
                From Proposition \ref{prop: VoPP2}(d), there exists $\eta\in k[Z^\circ]^*$ such that $\val^\tau_u = \val^\tau_\eta$. 
                In particular, $\val^\tau_u$ is a linear map over $\tau$ from \cite[Proposition 6.7(d)]{Y23}. 
                For any $v\in\supp(\Delta)\cap N$, we have $\val_u(v)\leq \val_f(v)$ because of the definition of $\val_u$ and the property of valuations. 
                Both functions $\val_u$ and $\val_f$ are continuous and are preserved by the scalar product of non-negative real numbers from \cite[Proposition 6.7(a)]{Y23} and Proposition \ref{prop: VoPP1}(a). 
                Thus $\val^\tau_u\leq \val^\tau_f$ because $\tau$ is a rational polyhedral cone. 
                Moreover, from \cite[Proposition 6.7(c)]{Y23}, $\val^\sigma_f$ is an upper convex function. 
                In particular, $\val^\tau_f$ is an upper convex function because $\tau\subset\sigma$. 
                Thus, $\val^\tau_f - \val^\tau_u$ is a non-negative valued upper convex function. 
                On the other hand, from the assumption, $\val^\tau_u(v_0) = \val^\tau_f(v_0)$. 
                Therefore, $\val^\tau_u = \val^\tau_f$ because $v_0\in\tau^\circ$.  
            \end{enumerate}
        \end{proof}
        \begin{proposition}\label{prop: Bertini for fine}
             We keep the notation in Lemma \ref{lem: preparation for Fine}.  
             Then $\val_u = \val_f$ and $\Delta_f = \Delta_u$ for general $(a_i)_{i\in S}\in k^{|S|}$.
        \end{proposition}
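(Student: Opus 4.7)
The plan is to use Lemma \ref{lem: preparation for Fine}(a) to pin down $\val_f$ at one lattice point in the relative interior of each cone of $\Delta_u$, and then to appeal to Lemma \ref{lem: preparation for Fine}(b) to propagate the equality $\val_u = \val_f$ across the entire cone. Since $\Delta_u$ is a strongly convex rational polyhedral fan by Proposition \ref{prop: VoPP2}(c) and hence has only finitely many cones, this reduces the claim to imposing finitely many Zariski open conditions on $(a_i)_{i \in S}$.

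Concretely, I would enumerate the cones $\tau_1, \ldots, \tau_m$ of $\Delta_u$ and for each $j$ choose a lattice point $v_j \in \tau_j^\circ \cap N$, which exists because $\tau_j$ is rational polyhedral. By Lemma \ref{lem: preparation for Fine}(a), for each $j$ there is a Zariski open dense subset $U_j \subset k^{|S|}$ such that $(a_i) \in U_j$ implies $\val_f(v_j) = \val_u(v_j)$. Setting $U = \bigcap_{j=1}^m U_j$, which remains Zariski open and dense since $k$ is infinite, I fix an arbitrary $(a_i) \in U$. Then Lemma \ref{lem: preparation for Fine}(b) applied to each $\tau_j$ with $v_0 := v_j$ gives $\val^{\tau_j}_u = \val^{\tau_j}_f$, and since the cones of $\Delta_u$ cover $\supp(\Delta_u) = \supp(\Delta)$, these local identities glue to the global identity $\val_u = \val_f$ on $\supp(\Delta)$. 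Unwinding Definition \ref{def: refinement by mock polytope}, this immediately yields $\Omega^\sigma_u = \Omega^\sigma_f$ and $C^\sigma_u(\chi, b) = C^\sigma_f(\chi, b)$ for every $\sigma \in \Delta$ and $(\chi, b) \in \Omega^\sigma_u$, hence $\Delta_f = \Delta_u$.

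The only subtle point is the finiteness of $\Delta_u$, which is needed so that $U$ is Zariski open rather than merely a countable intersection; this is guaranteed implicitly by the proof of Proposition \ref{prop: VoPP2}(a), where the refinement is obtained by reducing to a single function via Proposition \ref{prop: VoPP1}(b) and then invoking \cite[Proposition 6.8(c)]{Y23}. Everything else is a direct application of the two parts of Lemma \ref{lem: preparation for Fine}.
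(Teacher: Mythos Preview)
Your proof is correct and follows essentially the same approach as the paper's: pick one lattice point $v_j\in\tau_j^\circ\cap N$ in each of the finitely many cones of $\Delta_u$, use Lemma~\ref{lem: preparation for Fine}(a) to get $\val_f(v_j)=\val_u(v_j)$ on a dense open, then apply Lemma~\ref{lem: preparation for Fine}(b) to spread this equality over each cone. Your write-up is in fact slightly more explicit than the paper's about the finiteness of $\Delta_u$ and about why $\Delta_f=\Delta_u$ follows from $\val_f=\val_u$.
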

        \begin{proof}
            If $\val_u = \val_f$, then $\Delta_u = \Delta_f$ from the definition of $\Delta_u$ and $\Delta_f$. 
            Thus, it is enough to show that $\val_u = \val_f$ for general $(a_i)_{i\in S}\in k^{|S|}$. 
            Because $|\Delta|$ is finite, $|\Delta_u|$ is also finite from the proof of Proposition \ref{prop: VoPP2}(a). 
            Thus, it is enough to show that for any $\tau\in\Delta_u$, we have $\val^\tau_u = \val^\tau_f$ for general $(a_i)_{i\in S}\in k^{|S|}$. 
            Let $\tau\in\Delta_u$ be a cone, and $v_0\in \tau^\circ\cap N$ be an element. 
            Then from Lemma \ref{lem: preparation for Fine}(a), $\val_f(v_0) = \val_u(v_0)$ for general $(a_i)_{i\in S}\in k^{|S|}$. 
            Thus, from Lemma \ref{lem: preparation for Fine}(b), $\val^\tau_u = \val^\tau_f$ for such $(a_i)_{i\in S}\in k^{|S|}$. 
        \end{proof}
        From $u\in \mathscr{U}_{Z, S}$, we construct a refinement $\Delta_u$ of $\Delta$. 
        Let $Z_u$ denote the mock toric variety induced by $Z$ along $X(\Delta_u)\rightarrow X(\Delta)$. 
        In Definition\ref{def: polytope for orbit}, we define new mock polytopes for each $\tau\in\Delta_u$. 
        They are important to check the structure of the intersection of a hypersurface and a stratum of $Z_u$. 
        \begin{definition}\label{def: polytope for orbit}
            Let $(Z, S, u)$ be a mock polytope, $Z_u$ be a mock toric variety induced by $Z$ along $X(\Delta_u)\rightarrow X(\Delta)$, $\tau\in\Delta_u$ be a cone, $Z^\tau_u$ denote a mock toric variety $\overline{(Z_u)_\tau}$, $p^\tau$ denote a ring morphism $\Gamma(Z_u(\tau), \OO_{Z_u})\rightarrow \Gamma((Z_u)_{\tau}, \OO_{Z^\tau_u})$ associated with the closed immersion $(Z_u)_\tau\hookrightarrow Z_u(\tau)$, and $S^\tau$ denote a subset $\{i\in S\mid val^\tau_u = \val^\tau_{u(i)}\}$ of $S$. 
            
            From Proposition \ref{prop: VoPP2} (d), there exists $\chi\in k[Z^\circ]^*$ such that $\val^\tau_u = \val^\tau_\chi$. 
            Let $u^\tau_\chi$ denote a map from $S^\tau$ to $k[(Z^\tau_u)^\circ]$ defined as $u^\tau_\chi(i) = p^\tau(\chi^{-1}u(i))$ for $i\in S^\tau$. 
        \end{definition}
        The following proposition is immediately evident from Definition\ref{def: polytope for orbit}. 
        \begin{proposition}\label{prop: well-def of polytope}
            We keep the notation in Definition \ref{def: polytope for orbit}. Then the following statements follow:
            \begin{enumerate}
                \item[(a)] The tuple $(Z^\tau_u, S^\tau, u^\tau_\chi)$ is a mock polytope of $Z^\tau_u$. 
                \item[(b)] For any $\chi'\in k[Z^\circ]^*$ such that $\val^\tau_u = \val^\tau_{\chi'}$, we have $u^\tau_\chi\sim u^\tau_{\chi'}$.
                \item[(c)] Let $u'\in\mathscr{U}_{Z, S}$ be a map such that $u\sim u'$. 
                From Proposition \ref{prop: VoPP2}(e), $\Delta_u = \Delta_{u'}$, in particular, $\tau\in\Delta_{u'}$. 
                Then $\{i\in S\mid \val^\tau_{u'} = \val^\tau_{u'(i)}\}$ is equal to 
                $S^\tau$. 
                \item[(d)] We use the notation in (c). 
                Let $\chi'\in k[Z_0]^*$ be a unit such that $\val^\tau_{u'} = \val^\tau_{\chi'}$. 
                Then $u^\tau_\chi\sim u'^\tau_{\chi'}$. 
            \end{enumerate}
        \end{proposition}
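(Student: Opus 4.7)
The plan is to verify the four statements essentially by unwinding the definitions and exploiting the additivity of $\val$ established in \cite[Proposition 6.7]{Y23} together with the fact that units of $k[Z^\circ]$ whose associated valuation vanishes on $\tau$ descend to units on the orbit via $p^\tau$.

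For (a), I must show that $u^\tau_\chi(i) = p^\tau(\chi^{-1}u(i))$ lies in $k[(Z^\tau_u)^\circ]^*$ for every $i\in S^\tau$. By the definition of $S^\tau$ and the choice of $\chi$, we have $\val^\tau_{u(i)} = \val^\tau_u = \val^\tau_\chi$, hence $\val^\tau_{\chi^{-1}u(i)} \equiv 0$ on $\tau$ by \cite[Proposition 6.7(b)]{Y23}. This is exactly the hypothesis invoked at the start of the proof of Proposition \ref{prop: neo irreducible computation}(a), where \cite[Proposition 6.4]{Y23} is used to guarantee that such a function restricts through $p^\tau$ to a unit on $Z^\tau_u$. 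Applying that same reference yields $u^\tau_\chi(i)\in k[(Z^\tau_u)^\circ]^*$, so $(Z^\tau_u, S^\tau, u^\tau_\chi)$ is indeed a mock polytope.

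For (b), if $\val^\tau_\chi = \val^\tau_{\chi'}$, then $\val^\tau_{\chi/\chi'} \equiv 0$ on $\tau$, so by the same reference as in (a), $\eta := p^\tau(\chi/\chi')$ is a unit in $k[(Z^\tau_u)^\circ]$. A direct computation gives
\[
    u^\tau_{\chi'}(i) = p^\tau({\chi'}^{-1}u(i)) = p^\tau(\chi/\chi')\cdot p^\tau(\chi^{-1}u(i)) = \eta\cdot u^\tau_\chi(i),
\]
so $u^\tau_\chi \sim u^\tau_{\chi'}$. For (c), choose $\eta\in k[Z^\circ]^*$ with $u'(i) = \eta u(i)$ for all $i\in S$. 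By \cite[Proposition 6.7(b)]{Y23}, $\val_{u'(i)} = \val_\eta + \val_{u(i)}$, and the argument already used in the proof of Proposition \ref{prop: VoPP2}(e) shows $\val_{u'} = \val_\eta + \val_u$. Restricting to $\tau$ and subtracting $\val^\tau_\eta$ from both sides shows $\val^\tau_{u'(i)} = \val^\tau_{u'}$ if and only if $\val^\tau_{u(i)} = \val^\tau_u$, proving the equality of the two index sets.

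Finally for (d), using the $\eta$ of (c) and $\chi'$ as in the statement, we compute on $\tau$
\[
    \val^\tau_{\chi'} = \val^\tau_{u'} = \val^\tau_\eta + \val^\tau_u = \val^\tau_\eta + \val^\tau_\chi = \val^\tau_{\eta\chi},
\]
so $\val^\tau_{{\chi'}^{-1}\eta\chi}\equiv 0$ on $\tau$, whence $\xi := p^\tau({\chi'}^{-1}\eta\chi)$ is a unit in $k[(Z^\tau_u)^\circ]$ by the same appeal to \cite[Proposition 6.4]{Y23}. Then
\[
    {u'}^\tau_{\chi'}(i) = p^\tau({\chi'}^{-1}\eta u(i)) = \xi\cdot p^\tau(\chi^{-1}u(i)) = \xi\cdot u^\tau_\chi(i),
\]
which proves $u^\tau_\chi\sim {u'}^\tau_{\chi'}$. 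The only non-formal step is (a); everything else is bookkeeping with the additive relation $\val_{ab} = \val_a + \val_b$. The main obstacle is thus ensuring one has the right citation from \cite{Y23} to convert "$\val^\tau_g\equiv 0$" into "$p^\tau(g)$ is a unit on $Z^\tau_u$"; once this input is available, the remaining parts follow immediately from direct manipulation of the equivalence relation $\sim$.
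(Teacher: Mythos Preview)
Your argument is correct and follows the paper's proof essentially verbatim: each part reduces to the additivity $\val_{ab}=\val_a+\val_b$ plus the fact that a unit $g\in k[Z^\circ]^*$ with $\val^\tau_g\equiv 0$ lies in $\Gamma(Z_u(\tau),\OO_{Z_u})^*$ and hence has $p^\tau(g)\in k[(Z^\tau_u)^\circ]^*$. The only correction is the citation for that last fact: the paper invokes \cite[Proposition~4.8]{Y23}, not \cite[Proposition~6.4]{Y23}; in the proof of Proposition~\ref{prop: neo irreducible computation}(a) the latter is used only to \emph{produce} a unit $\chi$ with $\val^\tau_f+\val^\tau_\chi\equiv 0$, not to convert vanishing valuation into unit-ness, so your appeal to that passage is a misreading.
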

        \begin{proof}
            we prove the statements from (a) to (d) in order. 
            \begin{enumerate}
                \item[(a)] 
                It is enough to show that for any $i\in S^\tau$, we have $u^\tau_\chi(i)\in k[(Z^\tau_u)^\circ]^*$. 
                Let $i\in S^\tau$ be an element. 
                From the definition of $S^\tau$, we have $\val^\tau_{\chi^{-1}u(i)} \equiv 0$. 
                Thus, from \cite[Proposition 4.8]{Y23}, $\chi^{-1}u(i)\in \Gamma(Z_{u}(\tau), \OO_{Z_u})^*$. 
                Hence, $p^\tau(\chi^{-1}u(i))\in k[(Z^\tau_u)^\circ]^*$ too. 
                \item[(b)] From the assumption of $\chi$ and $\chi'$, $\val^\tau_{\chi'^{-1}\chi} \equiv 0$. 
                As the argument in the proof of (a), $\chi'^{-1}\chi\in \Gamma(Z_{u}(\tau), \OO_{Z_u})^*$. 
                Let $\eta$ denote $p^\tau(\chi'^{-1}\chi)\in k[(Z^\tau_u)^\circ]^*$. 
                Then for any $i\in S^\tau$, we have $\eta u^\tau_\chi(i) = u^\tau_{\chi'}(i)$. 
                This shows that $u^\tau_\chi \sim u^\tau_{\chi'}$. 
                \item[(c)] Let $\chi_0 \in k[Z^\circ]^*$ be a unit such that $\chi_0u(i) = u'(i)$ for any $i\in S$. 
                Then $\val_{u(i)} + \val_{\chi_0} = \val_{u'(i)}$ from \cite[Proposition 6.7(b)]{Y23}.
                Thus, $\val_{u} + \val_{\chi_0} = \val_{u'}$ from the definition of $\val_u$ and $\val_{u'}$. 
                Therefore, for any $i\in S$, $\val^\tau_u = \val^\tau_{u(i)}$ if and only if $\val^\tau_{u'} = \val^\tau_{u'(i)}$. 
                This shows that $\{i\in S\mid \val^\tau_{u'} = \val^\tau_{u'(i)}\}$ is equal to $S^\tau$. 
                \item[(d)] Let $\chi_0 \in k[Z^\circ]^*$ be a unit such that $\chi_0u(i) = u'(i)$ for any $i\in S$. 
                In the proof of (c), we have already shown that $\val_{u} + \val_{\chi_0} = \val_{u'}$. 
                Thus, $\val^\tau_{\chi\chi_0} = \val^\tau_{u'}$ from \cite[Proposition 6.7(b)]{Y23}. 
                Therefore from (b), $u'^\tau_{\chi\chi_0}\sim u'^\tau_{\chi'}$. 
                On the other hand, for any $i\in S$, $(\chi\chi_0)^{-1}u'(i) = \chi^{-1}u(i)$. 
                Thus, $u^\tau_{\chi} = u'^\tau_{\chi\chi_0}$. 
                This shows that $u^\tau_{\chi}\sim u'^\tau_{\chi'}$. 
            \end{enumerate}
        \end{proof}
        In Definition\ref{def: polytope for orbit}, we choose $\chi$ to define $S^\tau$ for each $\tau\in\Delta_u$. 
        However, in Proposition\ref{prop: well-def of polytope}, we showed that the definition of $S^\tau$ is independent of the choice of $\chi$. 
        Moreover, we showed that $u^\tau_\chi$ is the same up to the equivalence class. 
        Now, we define a new notation for the previously established notation, which takes into account the equivalent classes. 
        In practice, the shifts by multiplication of units are irrelevant when considering the linear system. 
        \begin{definition}\label{def: decomposition of mock polytope}
            Let $Z$ be a mock toric variety, $S$ be a nonempty finite set, and $[u]$ be an equivalent class of $\mathscr{U}_{Z, S}$. 
            \begin{itemize}
                \item 
                We call the tuple $(Z, S, [u])$ an abstract mock polytope of $Z$. 
                \item Let $\Delta_{[u]}$ denote $\Delta_u$. 
                From Proposition \ref{prop: VoPP2}(e), it is well-defined. 
                Let $Z_{[u]}$ denote a mock toric variety $Z_u$. 
                \item  
                Let $d_{[u]}$ denote $d_u$. 
                From Proposition \ref{prop: dimension of polytope}, it is well-defined. 
                \item We use the notation in Definition \ref{def: polytope for orbit}. 
                Let $\tau\in\Delta_u$ be a cone. 
                We call $(Z^\tau_u, S^\tau, [u^\tau_\chi])$ an abstract mock toric polytope of $Z^\tau_u$ induced by $(Z, S, [u])$ associated with $\tau$. 
                The definition is independent of the choice of $u\in[u]$ and $\chi\in k[Z^\circ]^*$ from Proposition \ref{prop: well-def of polytope}(b), (c), and (d). 
                Let $[u^\tau]$ denote $[u^\tau_\chi]$.  
                \item For an abstract mock polytope $(Z, S, [u])$ of $Z$, we call the following set the decomposition of $(Z, S, [u])$:
                \[
                    \{(Z^\tau_u, S^\tau, [u^\tau])\}_{\tau\in\Delta_{[u]}}
                \]
            \end{itemize}
        \end{definition}
        In the following lemma, we can compute the intersection of a hypersurface and a stratum of $Z_u$ for a general section in the linear system defined by $[u]$.  
        \begin{lemma}\label{lemma: polytope and stratification}
            We keep the notation in Definition \ref{def: decomposition of mock polytope}. 
            Let $(Z, S, [u])$ be an abstract mock polytope of $Z$, $(a_i)_{i\in S}\in k^{|S|}$ be a vector, $\tau\in\Delta_{[u]}$ be a cone, $f$ denote $\sum_{i\in S}a_iu(i)\in k[Z^\circ]$, and $f^\tau$ denote $\sum_{i\in S^\tau}a_iu^\tau(i)\in k[(Z^\tau_u)^\circ]$. 
            We assume that $\val_f = \val_{u}$. 
            Then $H_{Z_u, f}\cap (Z_u)_\tau = H^\circ_{Z^\tau_u, f^\tau}$. 
        \end{lemma}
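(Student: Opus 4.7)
The approach is to reduce the statement to the single-function case \cite[Proposition 6.11(a)]{Y23}, via a unit change of coordinates. By Proposition \ref{prop: VoPP2}(d) applied to $\tau \in \Delta_{[u]}$, there exists $\chi \in k[Z^\circ]^*$ with $\val^\tau_u = \val^\tau_\chi$; combining this with the hypothesis $\val_f = \val_u$ yields $\val^\tau_{\chi^{-1}f} \equiv 0$, so the hypersurface cut out by $\chi^{-1} f$ on $Z_u$ falls squarely into the situation of \cite[Proposition 6.11(a)]{Y23}. The remaining task is to identify the restriction $p^\tau(\chi^{-1} f)$ with the function $f^\tau$ appearing in the statement.

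To carry this out, expand $\chi^{-1} f = \sum_{i\in S} a_i (\chi^{-1} u(i))$ and examine each summand. For $i\in S^\tau$, the identity $\val^\tau_{\chi^{-1}u(i)} \equiv 0$ gives $\chi^{-1} u(i) \in \Gamma(Z_u(\tau), \OO_{Z_u})^*$ via \cite[Proposition 4.8]{Y23}, and by Definition \ref{def: polytope for orbit} its image $p^\tau(\chi^{-1}u(i))$ is exactly $u^\tau(i)$. For $i\notin S^\tau$, one still has $\val^\tau_{\chi^{-1} u(i)} \geq 0$ (since $\val^\tau_u$ is the minimum of the $\val^\tau_{u(i)}$), but the inequality is strict somewhere on $\tau$. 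I would then invoke the mock-toric analog of the standard toric fact that a section of $\OO_{Z_u}$ over $Z_u(\tau)$ whose $\tau$-valuation is not identically zero restricts to zero on $(Z_u)_\tau$, to conclude $p^\tau(\chi^{-1} u(i)) = 0$. Summing over $i \in S$, $p^\tau(\chi^{-1} f) = \sum_{i\in S^\tau} a_i u^\tau(i) = f^\tau$.

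With this identification established, the plan is to apply \cite[Proposition 6.11(a)]{Y23} to $\chi^{-1} f$ on $Z_u$ with trivial correcting unit, obtaining $H_{Z_u, \chi^{-1} f} \cap (Z_u)_\tau = H^\circ_{Z^\tau_u, f^\tau}$. Since $\chi \in k[Z^\circ]^*$ is a unit on the dense open $Z^\circ$, it does not alter the scheme-theoretic vanishing locus, so $H_{Z_u, f} = H_{Z_u, \chi^{-1} f}$, and the lemma follows.

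The main obstacle is the vanishing claim $p^\tau(\chi^{-1} u(i)) = 0$ for $i \notin S^\tau$: this is elementary in the torus setting but needs to be routed through the local structure of a mock toric variety. Concretely I would use Proposition \ref{prop: VoPP1}(b) to realize $\val^\tau_{\chi^{-1}u(i)}$ as pulled back from a toric valuation along a chart $\iota_\varphi$ of \cite[Section 4]{Y23}, so that the required vanishing follows from the toric statement on $X(\Delta_\varphi)$ and then transfers back through the mock-toric identifications. Once this local step is confirmed, the rest of the argument is purely formal bookkeeping.
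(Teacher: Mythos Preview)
Your approach is essentially the same as the paper's: pick $\chi$ with $\val^\tau_\chi = \val^\tau_u$, apply \cite[Proposition 6.11(a)]{Y23}, and identify $p^\tau(\chi^{-1}f)$ with $f^\tau$ by killing the terms with $i\notin S^\tau$. Two small points are worth noting. First, the paper opens by observing that $\val_f = \val_u$ forces $\Delta_f = \Delta_{[u]}$, hence $f$ is fine for $\Delta_{[u]}$ by \cite[Corollary 6.9]{Y23}; this is the standing hypothesis for \cite[Proposition 6.11(a)]{Y23}, which you invoke without checking it. Second, for the vanishing step you flag as the main obstacle, the paper does not pass through a toric chart. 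Instead it argues directly in the mock toric setting: since $\val^\tau_u$ and $\val^\tau_{u(i)}$ are both linear on $\tau$ (Proposition \ref{prop: VoPP2}(d) and \cite[Proposition 6.7(d)]{Y23}) and the former is $\le$ the latter, the argument of Lemma \ref{lem: preparation for Fine}(b) gives $\val_u(v_0) < \val_{u(i)}(v_0)$ for every $v_0\in\tau^\circ$, so $\val_Z(v_0)(\chi^{-1}u(i))>0$ for $v_0\in\tau^\circ\cap N$. Then \cite[Proposition 4.7(b)]{Y23} says that $\val_Z(v_0)$ dominates the local ring at the generic point of $(Z_u)_\tau$, which places $\chi^{-1}u(i)$ in the ideal of $(Z_u)_\tau$ and hence $p^\tau(\chi^{-1}u(i))=0$. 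Your proposed chart reduction via Proposition \ref{prop: VoPP1}(b) (really \cite[Proposition 6.3]{Y23} for single units) would also work, but the valuation-theoretic route is shorter and avoids tracking the local isomorphisms.
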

        \begin{proof}
            From the assumption, $\Delta_{[u]} = \Delta_f$.
            In particular, $f$ is fine for $\Delta_{[u]}$ from  \cite[Corollary 6.9]{Y23}. 
            There exists $\chi\in k[Z^\circ]^*$ such that $\val^\tau_\chi = \val^\tau_u$ from Proposition \ref{prop: VoPP2}(d), in particular, $\val^\tau_{\chi^{-1}f}\equiv 0$. 
            We can assume that $u^\tau(i) = p^\tau(\chi^{-1}u(i))$ for any $i\in S^\tau$ from Proposition \ref{prop: well-def of polytope}(b). 
            From \cite[Proposition 4.8]{Y23}, $\chi^{-1}f\in \Gamma(Z_{u}(\tau), \OO_{Z_{u}})$. 
            Let $g$ denote $p^\tau(\chi^{-1}f)$. 
            Then from \cite[Proposition 6.11(a)]{Y23}, $H_{Z_u, f}\cap (Z_u)_\tau = H^\circ_{Z^\tau_u, g}$. 
            Thus, it is enough to show that for any $i\notin S^\tau$, we have $p^{\tau}(\chi^{-1}u(i)) = 0$. 
            Let $i\in S$ be an element such that $i\notin S^\tau$. 
            Then from Lemma \ref{lem: preparation for Fine} (b), for any $v_0\in\tau^\circ$, we have $\val_{u}(v_0)<\val_{u(i)}(v_0)$. 
            In particular, $\val_{Z}(v_0)(\chi^{-1}u(i))>0$ for any $v_0\in\tau^\circ\cap N$. 
            From \cite[Proposition 4.7(b)]{Y23}, such $\val_Z(v_0)$ is a valuation of a valuation ring over $Z_{[u]}$ which dominates the local ring at the generic point of $(Z_u)_\tau$. 
            This shows that the ideal of $\Gamma(Z_{u}(\tau), \OO_{Z_{u}})$ associated with the closed subscheme $(Z_u)_\tau$ of $Z_{u}(\tau)$ contains 
            $\chi^{-1}u(i)$. 
            In particular, $p^\tau(\chi^{-1}u(i)) = 0$. 
        \end{proof}
        As mentioned at the beginning of this section, the following proposition indicates that a general section in the linear system defined by $[u]$ is non-degenerate for $\Delta_u$.
        \begin{proposition}\label{prop:non-empty and Bertini}
            Let $(Z, S, [u])$ be an abstract mock polytope of $Z$, $(a_i)_{i\in S}\in k^{|S|}$ be a vector, and $f$ denote $\sum_{i\in S}a_iu(i)\in k[Z^\circ]$. 
            Then the following statements follow: 
            \begin{enumerate}
                \item[(a)] If $d_{[u]} = 1$, then $H^\circ_{Z, f} = \emptyset$ for general $(a_i)_{i\in S}\in k^{|S|}$.
                \item[(b)] We assume that $k$ is an uncountable field and $d_{[u]}\geq 2$. 
                Then for general $(a_i)_{i\in S}\in k^{|S|}$, we have $H^\circ_{Z, f} \neq \emptyset$. 
                \item[(c)] Let $\tau\in\Delta_{[u]}$ be a cone. 
                We assume that $k$ is an uncountable field and $d_{[u^\tau]}\geq 2$. 
                Then for general $(a_i)_{i\in S}\in k^{|S|}$, we have $\Delta_f = \Delta_{[u]}$ and $H_{Z_{[u]}, f}\cap Z_{[u], \tau} \neq \emptyset$. 
                \item[(d)] Let $\Delta^{\geq 2}_{[u]}$ denote the following subset of $\Delta_{[u]}$:
                \[
                    \Delta^{\geq 2}_{[u]} = \{\tau\in\Delta_{[u]}\mid d_{[u^\tau]}\geq 2\}
                \]
                Then $\Delta^{\geq 2}_{[u]}$ is a sub fan of $\Delta_{[u]}$.
                \item[(e)] For general $(a_i)_{i\in S}\in k^{|S|}$, $H^\circ_{Z, f}$ is smooth over $k$. 
                \item[(f)] For general $(a_i)_{i\in S}\in k^{|S|}$, $f$ is non-degenerate for $\Delta_{[u]}$. 
            \end{enumerate}
        \end{proposition}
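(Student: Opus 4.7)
The plan is to prove items (a) through (f) in order, reducing the later parts to (a), (b), and (d), together with a Bertini-type smoothness argument. Item (a) is immediate: if $d_{[u]} = 1$ then every $u(i)$ is a scalar multiple of a single unit $u_0 \in k[Z^\circ]^*$, so $f = c u_0$ with $c \in k$ nonzero for general $(a_i)$, and hence $f$ is itself a unit with empty zero locus in $Z^\circ$. For (b), since the $u(i)$ are units on $Z^\circ$, the assignment $x \mapsto [u(i)(x)]_{i \in S}$ defines a morphism $\varphi \colon Z^\circ \to \PP^{|S|-1}$ whose image lies in a linear subspace of dimension $d_{[u]} - 1$. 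Were the image zero-dimensional, all ratios $u(i)/u(1)$ would be constant on $Z^\circ$, forcing $d_{[u]} = 1$; hence the image has positive dimension. A general hyperplane in $\PP^{|S|-1}$ meets this image, and uncountability of $k$ ensures that a general coefficient vector in $k^{|S|}$ defines such a hyperplane, so $H^\circ_{Z, f} \neq \emptyset$ for general $(a_i)$.

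Item (c) combines Proposition \ref{prop: Bertini for fine} (which gives $\Delta_f = \Delta_{[u]}$ generically) with Lemma \ref{lemma: polytope and stratification}, which identifies $H_{Z_{[u]}, f} \cap Z_{[u], \tau}$ with $H^\circ_{Z^\tau_u, f^\tau}$; applying (b) to the abstract mock polytope $(Z^\tau_u, S^\tau, [u^\tau])$, whose $d$-value is $\geq 2$ by hypothesis, then yields nonemptiness. For (d), fix $\gamma \preceq \tau$ with $\tau \in \Delta^{\geq 2}_{[u]}$. Restricting each equality $\val^\tau_u = \val^\tau_{u(i)}$ to $\gamma$ yields $S^\tau \subseteq S^\gamma$, and any $\chi \in k[Z^\circ]^*$ with $\val^\tau_\chi = \val^\tau_u$ automatically satisfies $\val^\gamma_\chi = \val^\gamma_u$, so the same $\chi$ can be used to define both $u^\tau$ and $u^\gamma$. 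Under the closed immersion $Z^\tau_u \hookrightarrow Z^\gamma_u$ coming from the face relation, restriction of functions sends $u^\gamma(i) \mapsto u^\tau(i)$ for each $i \in S^\tau$, so the induced linear map $V_{u^\gamma} \twoheadrightarrow V_{u^\tau}$ is surjective, giving $d_{[u^\gamma]} \geq d_{[u^\tau]} \geq 2$.

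For (e), apply a characteristic-zero Bertini smoothness theorem to the morphism $\varphi$ from (b): a general hyperplane pulls back to a smooth divisor on $Z^\circ$, so $H^\circ_{Z, f}$ is smooth for general $(a_i) \in k^{|S|}$. For (f), non-degeneracy of $f$ for $\Delta_{[u]}$ is checked stratum by stratum: for each of the finitely many $\tau \in \Delta_{[u]}$, Lemma \ref{lemma: polytope and stratification} reduces smoothness of $H_{Z_{[u]}, f} \cap Z_{[u], \tau}$ to that of $H^\circ_{Z^\tau_u, f^\tau}$, which holds generically by (e) applied to $(Z^\tau_u, S^\tau, [u^\tau])$; intersecting the finitely many generic conditions gives the claim. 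I expect the main obstacle to lie in (d), namely the careful verification that the canonical restriction $k[(Z^\gamma_u)(\tau)] \to k[(Z^\tau_u)^\circ]$ really carries $u^\gamma(i)$ to $u^\tau(i)$ for $i \in S^\tau$; this requires unpacking the functoriality of the mock toric orbit-closure construction along the face inclusion $\gamma \preceq \tau$, for which the results of Sections 3--6 of \cite{Y23} should suffice.
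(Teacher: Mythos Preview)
Your proofs of (a), (c), (e), and (f) match the paper's essentially verbatim, so there is nothing to add there.

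For (b) and (d) you take genuinely different routes from the paper, and both are worth comparing. In (b) the paper argues by contradiction: if the projection $\theta\colon H\to\A^{|S|}_k$ from the universal hypersurface were not dominant, then on a nonempty open $V$ every $f$ would be a unit, so $V(k)$ would be covered by the countably many proper linear subspaces $h^{-1}(k\cdot\chi)$ indexed by $k[Z^\circ]^*/k^*$, contradicting uncountability of $k$. Your argument via the morphism $\varphi\colon Z^\circ\to\PP^{|S|-1}$ is more direct and in fact does not use uncountability at all: once the image has positive dimension, every hyperplane meets its closure, and a general hyperplane avoids the lower-dimensional boundary. You should state explicitly that $Z^\circ$ is irreducible (this is part of the mock toric setup) so that a zero-dimensional image is a single point.

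For (d) the paper takes an indirect route: it shows that for a general $f$ one has ${}_f\Delta_{[u]}=\Delta^{\ge2}_{[u]}$ (using (a) and (c), hence implicitly uncountability), and then quotes \cite[Proposition~6.11(d)]{Y23} that ${}_f\Delta$ is a subfan. Your direct argument is cleaner and avoids the appeal to a generic $f$, but your description of the map $V_{u^\gamma}\twoheadrightarrow V_{u^\tau}$ as ``restriction of functions under $Z^\tau_u\hookrightarrow Z^\gamma_u$'' is not quite right: the functions $u^\gamma(i)$ live on the open stratum $(Z_u)_\gamma$, which does not contain $(Z_u)_\tau$. The correct construction is to observe that for every $i\in S^\gamma$ one has $\chi^{-1}u(i)\in\Gamma(Z_u(\tau),\OO)$ (since $\val^\tau_{\chi^{-1}u(i)}\ge0$), and that $p^\tau(\chi^{-1}u(i))$ equals $u^\tau(i)$ for $i\in S^\tau$ and vanishes for $i\in S^\gamma\setminus S^\tau$ (by the same valuation argument as in Lemma~\ref{lemma: polytope and stratification}). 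If $g=\sum_{i\in S^\gamma}b_i\chi^{-1}u(i)$ restricts to zero on $(Z_u)_\gamma$, then $g$ vanishes on the closure of $(Z_u)_\gamma$ in $Z_u(\tau)$, which contains $(Z_u)_\tau$; hence $p^\tau(g)=0$. This gives the kernel inclusion needed to define the surjection $V_{u^\gamma}\twoheadrightarrow V_{u^\tau}$, and your conclusion $d_{[u^\gamma]}\ge d_{[u^\tau]}\ge2$ follows. So the obstacle you flagged is real but easily overcome without deep input from \cite{Y23}; only the orbit-closure relation and \cite[Proposition~4.8]{Y23} are needed.
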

        \begin{proof}
            We prove the statements from (a) to (f) in order. 
            \begin{enumerate}
                \item[(a)] From the assumption, there exists $\chi\in k[Z^\circ]^*$ and $(c_i)_{i\in S}\in (k^*)^{|S|}$ such that $\sum_{i\in S}a_iu(i) = (\sum_{i\in S}a_ic_i)\chi$. 
                Because $\chi$ is a unit of $k[Z^\circ]$, for general $(a_i)_{i\in S}\in k^{|S|}$, we have $H^\circ_{Z_{[u]}, f} = \emptyset$. 
                \item[(b)] 
                Let $h$ denote a linear morphism  from $k^{|S|}$ to $k[Z^\circ]^*$ defined as $h((a_i)_{i\in S}) = \sum_{i\in S}a_iu(i)$ for $(a_i)_{i\in S}\in k^{|S|}$. 
                We remark that for any $\chi\in k[Z^\circ]^*$, $h^{-1}(k\cdot\chi)$ is a strict linear subspace of $k^{|S|}$ from the assumption of $d_{[u]}$. 
                Let $H$ be a closed subscheme of $\A^{|S|}_k\times Z^\circ_{u}$ defined by  $\sum_{i\in S}x_iu(i)\in k[Z^\circ][x_i]$, where $\{x_i\}_{i\in S}$ are the coordinate functions of $\A^{|S|}_k$.  
                Let $\theta\colon H\rightarrow\A^{|S|}_k$ denote the first projection. 
                From Chevalley's theorem, $\theta(H)$ is a constractible set of $\A^{|S|}_k$. 
                We show that $\theta$ is a dominant map by a contradiction. 
                We assume that $\theta$ is not dominant. 
                Then there exists a non-empty open subset $V$ of $\A^{|S|}_k$ such that $V\cap\theta(H) = \emptyset$. 
                This shows that for any $(a_i)_{i\in S}\in V(k)$, we have $\sum_{i\in S}a_iu(i)\in k[Z^\circ]^*$. 
                Thus, from the remark, $V(k)$ is covered by strict linear subspaces $\{h^{-1}(k\cdot\chi)\}_{\chi\in k[Z^\circ]^*}$ of $k^{|S|}$. 
                On the other hand, a group $k[Z^\circ]^*/k^*$ is a free abelian group of finite rank, so $|\{h^{-1}(k\cdot\chi)\}_{\chi\in k[Z^\circ]^*}|$ is countable. 
                However, $V(k)$ can not be covered by countable strict closed subsets of $\A^{|S|}_k$ because $k$ is an uncountable field. 
                It is a contradiction. 
                \item[(c)] 
                Let $f^\tau$ denote $\sum_{i\in S^\tau} a_iu^\tau(i)\in k[(Z^\tau_u)^\circ]$. 
                From Proposition \ref{prop: Bertini for fine} and Lemma\ref{lemma: polytope and stratification}, $H_{Z_u, f}\cap (Z_u)_\tau = H^\circ_{Z^\tau_u, f^\tau}$ for general $(a_i)_{i\in S}\in k^{|S|}$. 
                Let $g$ denote $\sum_{i\in S^\tau}b_iu^\tau(i)\in k[(Z^\tau_u)^\circ]$ for $(b_i)_{i\in S^\tau}\in k^{|S^\tau|}$.  
                Thus, from (b) and the assumption of $d_{[u^\tau]}$, we have $H^\circ_{Z^\tau_u, g} \neq \emptyset$ for general $(b_i)_{i\in S^\tau}\in k^{|S^\tau|}$. 
                Therefore, $H_{Z_u, f}\cap (Z_u)_\tau \neq \emptyset$ for general $(a_i)_{i\in S}\in k^{|S|}$. 
                \item[(d)] Because $|\Delta_u|$ is finite, for general $(a_i)_{i\in S}\in k^{|S|}$, $f$ is fine for $\Delta_u$ and $H_{Z_u, f}\cap (Z_u)_\tau \neq \emptyset$ for all  $\tau\in\Delta_u$ such that $d_{[u^\tau]} \geq 2$ from Proposition \ref{prop: Bertini for fine} and (c). 
                We remark that for any $\tau\in\Delta_u$ such that $d_{[u^\tau]} = 1$, we have $H_{Z_u, f}\cap (Z_u)_\tau = \emptyset$ from the fact that $f$ is fine for $\Delta_u$, the argument in the proof of (a), and Lemma\ref{lemma: polytope and stratification}. 
                Thus, $\Delta^{\geq 2}_{[u]}$ is equal to ${}_f(\Delta_{[u]})$ for general $(a_i)_{i\in S}\in k^{|S|}$. 
                Therefore, from \cite[Proposition 6.11(d)]{Y23}, $\Delta^{\geq 2}_{[u]}$ is a sub fan of $\Delta_{[u]}$. 
                \item[(e)] For any $i\in S$, $u(i)$ is a unit in $k[Z^\circ]$. 
                In particular, $\{u(i)\}_{i\in S}\subset\Gamma(Z^\circ, \OO_{Z})$ has no base locus in $Z^\circ$. 
                Thus, from Bertini's theorem, $H^\circ_{Z, f}$ is smooth over $k$ for general $(a_i)_{i\in S}\in k^{|S|}$. 
                \item[(f)] Let $f^\tau$ denote $\sum_{i\in S^\tau}a_iu^\tau(i)\in k[(Z^\tau_u)^\circ]$. 
                From Proposition \ref{prop: Bertini for fine} and Lemma\ref{lemma: polytope and stratification}, for general $(a_i)_{i\in S}\in k^{|S|}$, we have $H_{Z_u, f}\cap (Z_u)_\tau = H^\circ_{Z^\tau_u, f^\tau}$. 
                Let $g$ denote $\sum_{i\in S^\tau}b_iu^\tau(i)\in k[(Z^\tau_u)^\circ]^*$ for $(b_i)_{i\in S^\tau}\in k^{|S^\tau|}$.  
                Thus, from (e), for general $(b_i)_{i\in S^\tau}\in k^{|S^\tau|}$, $H^\circ_{Z^\tau_u, g}$ is smooth over $k$. 
                Therefore, for general $(a_i)_{i\in S}\in k^{|S|}$, $H_{Z_u, f}\cap (Z_u)_\tau$ is smooth over $k$. 
                Therefore, because $|\Delta_u|$ is finite, for general $(a_i)_{i\in S}\in k^{|S|}$, $f$ is non-degenerate for $\Delta_{[u]}$. 
            \end{enumerate}
        \end{proof}
        Until now, we have been considering mock toric varieties over $\A^1_k$ and their hypersurfaces, and the same holds in this case. 
        In particular, we show that for a general section, its hypersurface dominates $\A^1_k$ in Proposition\ref{prop: polytope and dominant}.
        \begin{definition}\label{def: model polytope}
        We use the notation in \cite[Proposition 4.14]{Y23}, that in \cite[Section 7]{Y23}, and the following ones:
        \begin{itemize}
            \item Let $(Z_\pi, S, u)$ be a mock polytope of $Z_\pi$. 
            \item Let $\kappa\colon S\rightarrow \ZZ$ be a map. 
            \item Let $u\oplus\kappa\in \mathscr{U}_{Z^1_{\pi^1}, S}$ denote the map defined as $u\oplus\kappa(i) = u(i)t^{\kappa(i)}$ for $i\in S$. 
            We recall that $u\oplus\kappa(i)\in k[Z^1_{\pi^1}]^*$ for any $i\in S$. 
            We call a mock polytope $(Z^1_{\pi^1}, S, u\oplus\kappa)$ of $Z^1_{\pi^1}$ a model of mock polytope of $(Z^\pi, S, u)$ along $\kappa$. 
        \end{itemize}
        \end{definition}
        \begin{proposition}\label{prop: abstract model}
            We keep the notation in  Definition \ref{def: model polytope}.    
            Let $u'\in\mathscr{U}_{Z_\pi, S}$ be a map. 
            If $u\sim u'$, then $u\oplus\kappa \sim u'\oplus\kappa$. 
        \end{proposition}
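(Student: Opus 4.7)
The plan is to unwind the definitions and observe that the required unit in $k[Z^1_{\pi^1}]^*$ can be taken to be precisely the unit witnessing $u\sim u'$ in $k[Z_\pi^\circ]^*$, transported via the natural ring map to $k[Z^1_{\pi^1}]^*$. Concretely, from $u\sim u'$ I get $\chi\in k[Z_\pi^\circ]^*$ with $u'(i)=\chi\, u(i)$ for all $i\in S$, and I want to exhibit $\widetilde\chi\in k[Z^1_{\pi^1}]^*$ with $(u'\oplus\kappa)(i)=\widetilde\chi\cdot(u\oplus\kappa)(i)$ for all $i\in S$.

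First I would recall from the construction in \cite[Section 7]{Y23} and Definition \ref{def: model polytope} that the mock toric variety $Z^1_{\pi^1}$ carries a natural structure morphism under which elements of $k[Z_\pi^\circ]^*$ pull back to units of $k[Z^1_{\pi^1}]^*$; this is exactly what makes the very definition $u\oplus\kappa(i)=u(i)t^{\kappa(i)}$ land in $k[Z^1_{\pi^1}]^*$, as already noted in Definition \ref{def: model polytope}. Thus $\chi$ may be viewed as an element $\widetilde\chi\in k[Z^1_{\pi^1}]^*$.

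Then the verification is a one-line computation: for every $i\in S$,
\[
    \widetilde\chi\cdot(u\oplus\kappa)(i)
    \;=\;\chi\cdot u(i)\cdot t^{\kappa(i)}
    \;=\;u'(i)\cdot t^{\kappa(i)}
    \;=\;(u'\oplus\kappa)(i),
\]
so $u\oplus\kappa\sim u'\oplus\kappa$ by definition of the equivalence relation.

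There is essentially no obstacle here; the only point that deserves any care is the implicit use of the ring homomorphism $k[Z_\pi^\circ]\to k[Z^1_{\pi^1}]$ sending units to units, but this is baked into the construction of the model and is already invoked tacitly in Definition \ref{def: model polytope}. Accordingly I expect the proof to be quite short, essentially just the display above together with an explicit pointer to where the pullback of $\chi$ is shown to remain a unit.
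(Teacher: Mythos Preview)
Your proof is correct and follows essentially the same approach as the paper: take the unit $\chi$ witnessing $u\sim u'$, observe it remains a unit in $k[Z^1_{\pi^1}]^*$, and multiply through by $t^{\kappa(i)}$. The paper's argument is identical in substance, just slightly more terse.
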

        \begin{proof}
            Let $\chi\in k[Z_\pi]^*$ be a unit such that $\chi u(i) = u'(i)$ for any $i\in S$. 
            We remark that $\chi\in k[Z^1_{\pi^1}]^*$ too. 
            Thus, $\chi u(i)t^{\kappa(i)} = u'(i)t^{\kappa(i)}$ in $k[Z^1_{\pi^1}]$ for any $i\in S$. 
            This equation shows that $u\oplus\kappa \sim u'\oplus\kappa$. 
        \end{proof}
        In Definition \ref{def: model polytope}, we can define the equivalent class $[u\oplus\kappa]$ of $\mathscr{U}_{Z^1_{\pi^1}, S}$ from an abstract mock polytope $(Z_\pi, S, [u])$ by Proposition \ref{prop: abstract model}. 
        We call that $(Z^1_{\pi^1}, S, [u\oplus\kappa])$  an abstract model mock polytope of $(Z_\pi, S, [u])$ along $\kappa$. 
        \begin{proposition}\label{prop: polytope and dominant}
            We keep the notation in Definition \ref{def: model polytope}.  
            Let $(Z^1_{\pi^1}, S, [u\oplus\kappa])$ be an abstract model mock polytope of $(Z_\pi, S, [u])$ along $\kappa$. 
            Let $(a_i)_{i\in S}\in k^{|S|}$ be a vector and $f$ denote $\sum_{i\in S}a_i(u\oplus\kappa)(i)\in k[Z^1_{\pi^1}]$. 
            If $d_{[u]} \geq 2$, then every irreducible component of $H^\circ_{Z^1_{\pi^1}, f}$ dominates ${\Gm^1}_{,k}$ for general $(a_i)_{i\in S}\in k^{|S|}$. 
        \end{proposition}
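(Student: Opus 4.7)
The plan is to apply Krull's principal ideal theorem to cut down the dimension of components of $H^\circ_{Z^1_{\pi^1}, f}$, and then argue that for general $(a_i)$ no component can sit inside a single fiber of the projection to ${\Gm^1}_{,k}$. The hypothesis $d_{[u]}\geq 2$ enters only through a linear-algebra dimension count on an incidence variety in the second half of the argument.

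First I would observe that the smooth locus $(Z^1_{\pi^1})^\circ$ is a smooth integral variety of dimension $\dim Z_\pi+1$ admitting a smooth projection onto ${\Gm^1}_{,k}$ whose fiber over each $t_0\in k^*$ is isomorphic to $Z_\pi^\circ$. Since $d_{[u]}\geq 2$ the units $u(i)$ are not all proportional, so a short check shows that for general $(a_i)_{i\in S}$ the function $f$ is a nonzero non-unit in $k[(Z^1_{\pi^1})^\circ]$. Krull's principal ideal theorem then forces every irreducible component $V$ of $H^\circ_{Z^1_{\pi^1}, f}$ to have dimension exactly $\dim Z_\pi$.

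Such a component $V$ fails to dominate ${\Gm^1}_{,k}$ precisely when its image is a single closed point $t_0\in{\Gm^1}_{,k}(k)$; in that case $V\subseteq (\pi^1)^{-1}(t_0)$, and since both sides have dimension $\dim Z_\pi$ and the fiber is irreducible, $V$ coincides with the whole fiber. Equivalently, $f$ vanishes identically on that fiber, i.e., the specialization
\[
    f_{t_0}:=\sum_{i\in S}a_i t_0^{\kappa(i)} u(i)\in k[Z_\pi^\circ]
\]
is the zero function. It therefore suffices to show that for general $(a_i)\in k^{|S|}$ there is no $t_0\in k^*$ with $f_{t_0}=0$. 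For this I would introduce the incidence set
\[
    X=\{(t_0,(a_i))\in {\Gm^1}_{,k}\times \A^{|S|}_k : f_{t_0}=0\}.
\]
Over each fixed $t_0$, the equation $f_{t_0}=0$ is a system of exactly $d_{[u]}$ linearly independent linear conditions on $(a_i)$, because multiplying each $u(i)$ by the nonzero scalar $t_0^{\kappa(i)}$ does not change the rank of the family $\{u(i)\}_{i\in S}$. Hence $X$ has pure dimension $1+|S|-d_{[u]}$, and by Chevalley's theorem its image in $\A^{|S|}_k$ under the second projection is a constructible set of dimension at most $1+|S|-d_{[u]}\leq |S|-1 < |S|$, using $d_{[u]}\geq 2$. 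Its Zariski closure is therefore a proper closed subset of $\A^{|S|}_k$, and for $(a_i)$ in the complement every irreducible component of $H^\circ_{Z^1_{\pi^1}, f}$ dominates ${\Gm^1}_{,k}$.

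The main obstacle I anticipate is the bookkeeping in the first step: verifying that the smooth open part $(Z^1_{\pi^1})^\circ$ really projects onto ${\Gm^1}_{,k}$ with fibers identified with $Z_\pi^\circ$ in a way compatible with the literal specialization $f\mapsto f_{t_0}$ requires unpacking the base-change construction and the definition of $\pi^1$ from \cite[Section 7]{Y23}. Once this geometric setup is in place, the Krull codimension argument and the linear-algebra dimension count on $X$ are both routine.
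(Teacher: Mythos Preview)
Your proposal is correct and follows essentially the same approach as the paper: both arguments reduce ``some component fails to dominate $\Gm^1$'' to the vanishing $f_{t_0}=\sum_i a_i t_0^{\kappa(i)}u(i)=0$ for some $t_0\in k^*$, and then bound the bad locus in $\A^{|S|}_k$ by a dimension count using $d_{[u]}\geq 2$. The only cosmetic difference is that the paper phrases the first reduction algebraically (a non-dominating component forces a factor $t-\alpha$ of $f$ in $k[t,t^{-1}][Z_\pi]$) and the second via a $\Gm$-action on $\A^{|S|}$, whereas you phrase them geometrically via Krull plus fiber dimension and an incidence variety; these are the same computations in different language.
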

        \begin{proof}
            Let $(a_i)_{i\in S}\in k^{|S|}$ be a vector. 
            We assume that one irreducible component of $H^\circ_{Z^1_{\pi^1}, f}$ does not dominate ${\Gm^1}_{,k}$. 
            Because $k[Z^1_{\pi^1}]\cong k[t, t^{-1}][Z_\pi]$, there exists a non-unit polynomial $g\in k[t, t^1]$ such that $f$ can be divide by $g$. 
            Because $k$ is algebraically closed, there exists $\alpha\in k^*$ such that $t-\alpha | g$. 
            In particular, $\sum_{i\in S}\alpha^{\kappa(i)}a_iu(i) = 0\in k[Z_\pi]$.  

            Let $\rho_\kappa\colon{\Gm^1}_{,k}\times\A^{|S|}_{k}\rightarrow \A^{|S|}_{k}$ be an algebraic action of ${\Gm^1}_{,k}$ for $\A^{|S|}_k$ defined as $\rho_\kappa(t_0)(b_i) = (t_0^{-\kappa(i)}b_i)_{i\in S}$ for $t_0\in k^*$ and $(b_i)_{i\in S}\in k^{|S|}$. 
            Let $h$ denote a linear morphism from $k^{|S|}$ to $k[Z^1_{\pi^1}]^*$ defined as $h((c_i)_{i\in S}) = \sum_{i\in S}c_iu(i)$ for $(c_i)_{i\in S}\in k^{|S|}$. 
            From the property of $g$, we have $(\alpha^{\kappa(i)}a_i)_{i\in S}\in\ker(h)$. 
            We can identify with $k^{|S|}$ and $\A^{|S|}_k(k)$. 
            Let $X$ denote a linear subspace of $\A^{|S|}_k$ associated with $\ker(h)\subset k^{|S|}$. 
            On this identification, we can also check that $(a_i)_{i\in S}\in \rho_\kappa({\Gm^1}_{,k}(k))(X(k))$. 
             
            Because $d_{u}\geq 2$, we have $\dim(X)\leq |S| - 2$. 
            Thus, the restriction $\rho_{\kappa}|_{{\Gm^1}_{,k}\times X}\colon {\Gm^1}_{,k}\times X\rightarrow \A^{|S|}_k$ is not dominant. 
            Therefore, for general $(a_i)_{i\in S}\in k^{|S|}$, every irreducible component of $H^\circ_{Z^1_{\pi^1}, f}$ dominates ${\Gm^1}_{,k}$ from Chevalley's theorem.
        \end{proof}
        We use the notation in \cite[Definition 4.11]{Y23}. 
        At the end of this article, we compute the stable birational volume of hypersurfaces in $\Gr_\C(2, n)$. 
        In fact, we haven't found a mock toric structure of $\Gr_\C(2, n)$ yet. 
        Alternatively, it is necessary to construct a mock toric variety of dimension $n-3$. 
        Unfortunately, we were unable to find directly a proper mock variety of dimension $2(n-2) = \dim(\Gr_\C(2, n))$. 
        Instead, we consider a $(n-1)$-dimensional algebraic torus fibration over a $(n-3)$-dimensional proper mock toric variety. 
        Next, by compactifying this $2(n-2)$-fold, we obtain a proper mock toric variety of dimension $2(n-2)$. 
        The two crucial components for the compactification are $\val_{u, \pi}$ and $\Delta_{u, \pi}$, which are constructed from $\{\val_{u(i), \pi}\}_{i\in S}$ defined in \cite[Proposition 6.16]{Y23}.
        \begin{proposition}\label{prop: extend refinement along function}
            We keep the notation in \cite[Definition 4.11]{Y23}. 
            Let $(Z_\pi, S, u)$ be a mock polytope of $Z_\pi$. 
            We use the following notation:
            \begin{itemize}
                \item Let $\val_{u, \pi}$ denote the function from $\pi^{-1}_\RR(\supp(\Delta))$ to $\RR$ defined as follows for $w\in \pi^{-1}_\RR(\supp(\Delta))$: 
                \[
                    \val_{u, \pi}(w) = \min_{i\in S}\{\val_{u(i), \pi}(w)\}
                \]

                We remark that $\val_{u, \pi}$ is continuous and is preserved by the scalar product of non-negative real numbers. 
                Indeed, from \cite[Proposition 6.16]{Y23}, $\val_{u(i), \pi}$ is continuous and is preserved by the scalar product of non-negative real numbers for any $i\in S$. 
                Thus, $\val_{u, \pi}$ is so from the definition of $\val_{u, \pi}$. 
                \item Let $\sigma\in\Delta$ be a cone, $\sigma^\pi$ denote $\pi^{-1}_\RR(\sigma)$, and $\Omega^{\sigma^\pi}_{u, \pi}$ denote the following set:
                \[
                    \Omega^{\sigma^\pi}_{u, \pi} = \{(\chi, b)\in k[Z_\pi]^*\times\ZZ_{>0}\mid val_{u, \pi}(v') \leq \frac{1}{b}\val_{\chi, \pi}(v')\quad(\forall v'\in\sigma^\pi)\}
                \]
                \item For $(\chi, b)\in \Omega^{\sigma^\pi}_u$, let $C^{\sigma^\pi}_{u, \pi}(\chi, b)$ denote the following set:
                \[
                    C^{\sigma^\pi}_{u, \pi}(\chi, b) = \{v'\in \sigma^\pi\mid val_{u, \pi}(v') = \frac{1}{b}\val_{\chi, \pi}(v')\}
                \]
            \end{itemize}
            Then the following statements follow:
            \begin{enumerate}
            	\item[(a)] Let $\varphi\in\Phi$ be an element such that $\sigma\in\Delta_\varphi$, $\sigma_\varphi$ denote $(q_\varphi)_\RR(\sigma)$, and $\sigma^\pi_\varphi$ denotes a rational polyhedral convex cone $(q'_\varphi)_\RR(\sigma^\pi)$ in $(N'/s(N_\varphi))_\RR$ .  
            	Then 
            		\begin{itemize}
            			\item[(I)] $\sigma^\pi_\varphi = (\pi_\varphi)^{-1}_\RR(\sigma_\varphi)$
            			\item[(II)] $(q'_\varphi)_\RR|_{\sigma^\pi}\colon \sigma^\pi\rightarrow \sigma^\pi_\varphi$ is bijective. 
            			\item[(III)] $q'_\varphi|_{\langle\sigma^\pi\rangle\cap N'}\colon \langle\sigma^\pi\rangle\cap N'\rightarrow \langle\sigma^\pi_\varphi\rangle\cap N'/s(N_\varphi)$ is isomorphic. 
            		\end{itemize} 
                \item[(b)] Let $\sigma\in\Delta$ be a cone and $\varphi\in\Phi$ be an element such that $\sigma\in\Delta_\varphi$. 
                Let $\sigma^\pi_\varphi$ denotes $(q'_\varphi)_\RR(\sigma^\pi)$. 
                Then there exists $g_\varphi\in k[M'_\varphi]$ such that $\val^{\sigma^\pi}_{u, \pi} = \val^{\sigma^\pi_\varphi}_{g_\varphi}\circ(q'_\varphi)_\RR|_{\sigma^\pi}$.  
                \item[(c)] Let $g_\varphi\in k[M'_\varphi]$ be a function that satisfies the condition in (b). 
                Then for any $(\chi, b)\in\Omega^{\sigma^\pi}_{u, \pi}$, there exists $\omega\in\Omega^{\sigma^\pi_\varphi}_{g_\varphi}$ such that the following equation holds:
                \[
                    (q'_\varphi)_\RR(C^{\sigma^\pi}_{u, \pi}(\chi, b)) = C^{\sigma^\pi_\varphi}_{g_\varphi}(\omega)
                \]
                Conversely, for any  $\omega\in\Omega^{\sigma^\pi_\varphi}_{g_\varphi}$, there exists $(\chi, b)\in\Omega^{\sigma^\pi}_{u, \pi}$ such that the following equation holds:
                \[
                    C^{\sigma^\pi_\varphi}_{g_\varphi}(\omega) = (q'_\varphi)_\RR(C^{\sigma^\pi}_{u, \pi}(\chi, b))
                \]
                \item[(d)] The set $\{C^\sigma_{u, \pi}(\chi, b)\}_{(\chi, b)\in\Omega^{\sigma^\pi}_{u, \pi}}$ is a rational polyhedral convex fan and a refinement of $\sigma^\pi$.
                \item[(e)] Let $\tau$ be a face of $\sigma$ and $\tau^\pi$ denote $\pi^{-1}_\RR(\tau)$. 
                Then $\Omega^{\sigma^\pi}_{u, \pi}\subset\Omega^{\tau^\pi}_{u, \pi}$. 
                Moreover, for any $(\chi, b)\in\Omega^{\sigma^\pi}_{u, \pi}$, we have $C^{\sigma^\pi}_{u, \pi}(\chi, b)\cap\tau^\pi = C^{\tau^\pi}_{u, \pi}(\chi, b)$. 
                In particular, $C^{\tau^\pi}_{u, \pi}(\chi, b)\preceq C^{\sigma^\pi}_{u, \pi}(\chi, b)$.  
                \item[(f)] The following set $\Delta_{u, \pi}$ is a rational polyhedral convex fan in $N_\RR$ and $\supp(\Delta_{u, \pi}) = \pi^{-1}_\RR(\supp(\Delta))$:
                \[
                    \Delta_{u, \pi} = \bigcup_{\sigma\in\Delta}\{C^{\sigma^\pi}_{u, \pi}(\chi, b)\}_{(\chi, b)\in\Omega^{\sigma^\pi}_{u, \pi}}
                \]
                \item[(g)] For any $(\chi, b)\in\Omega^{\sigma^\pi}_{u, \pi}$, there exists $\chi'\in k[Z_\pi]^*$ such that $\val_{u, \pi}(v') = \val_{\chi', \pi}(v')$  for any $v'\in C^{\sigma^\pi}_{u, \pi}(\chi, b)$. 
                In particular, $\val_{u, \pi}$ is a linear function on $C^{\sigma^\pi}_{u, \pi}$ from \cite[Proposition 6.17]{Y23}. 
                \item[(h)] We use the notation in (f).
                We assume that $\Delta_{u, \pi}$ is a strongly convex rational polyhedral fan in $N'_\RR$. 
                Let $\Delta'$ denote $\Delta_{u, \pi}$ and $W$ denote a mock toric variety induced by $Z$ along $\pi_*\colon X(\Delta')\rightarrow X(\Delta)$ and $s$. 
                Then for the mock polytope $(W, S, u)$, we have $\Delta'_u = \Delta'$. 
            \end{enumerate}
        \end{proposition}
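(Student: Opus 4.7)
The plan is to handle parts (a)--(g) in parallel with the earlier Propositions~\ref{prop: VoPP1} and~\ref{prop: VoPP2}, reducing each assertion about $\val_{u,\pi}$ on $\sigma^\pi$ to a classical toric statement about a single Laurent polynomial $g_\varphi$ on $\sigma^\pi_\varphi$; part (h) is then handled by combining (g) with the construction of $W$ along $\pi_*$.

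Part (a) is pure lattice geometry. I would unfold the definitions of $\pi$, $q_\varphi$, $q'_\varphi$, and $\pi_\varphi$ from \cite[Proposition 4.14]{Y23} into a commutative square in which $\ker(q'_\varphi) = s(N_\varphi)$ and $\pi$ sends $s(N_\varphi)$ isomorphically onto $N_\varphi = \ker(q_\varphi)$. Statement (I) is then a diagram chase translating $v' \in \sigma^\pi_\varphi$ into $\pi_\varphi(v') \in \sigma_\varphi$, and (II)--(III) follow because each fiber of $q'_\varphi$ meets $\sigma^\pi$ in a single point, which holds because $\sigma^\pi$ is compatible with a splitting of $N'$ and $\sigma$ is strongly convex.

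For (b), I would apply \cite[Proposition 6.16]{Y23} to each $u(i)$ individually to produce $\omega_i \in M'_\varphi$ with $\val^{\sigma^\pi}_{u(i),\pi} = \val^{\sigma^\pi_\varphi}_{\chi^{\omega_i}} \circ (q'_\varphi)_\RR|_{\sigma^\pi}$, and then set $g_\varphi = \sum_{i \in S} \chi^{\omega_i} \in k[M'_\varphi]$; the torus-invariant valuation on $X(M'_\varphi)$ evaluates $g_\varphi$ as the minimum of the $\val_{\chi^{\omega_i}}$, which matches the definition of $\val_{u,\pi}$ as the minimum of the $\val_{u(i),\pi}$. With (b) in hand, statements (c)--(g) transfer through the bijection $(q'_\varphi)_\RR|_{\sigma^\pi}$: pairs $(\chi,b)$ in $\Omega^{\sigma^\pi}_{u,\pi}$ correspond to exponents $\omega$ in $\Omega^{\sigma^\pi_\varphi}_{g_\varphi}$ with matching cone decompositions, and (d), (e), (g) become direct consequences of \cite[Proposition 6.8]{Y23} via this correspondence. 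For (f), one glues the local refinements over the faces of $\Delta$; compatibility on intersections is given by (e), and the total support is $\pi^{-1}_\RR(\supp(\Delta))$ by (a)(I).

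The main obstacle is (h). Given $\tau \in \Delta'$, part (g) yields a unit $\chi \in k[Z_\pi]^*$ with $\val_{u,\pi}|_\tau = \val_{\chi,\pi}|_\tau$. The goal is to show that, after passing to $W$, the analogous function $\val_u$ of Definition~\ref{def:val function of mock polytope} applied to the mock polytope $(W,S,u)$ is already linear on each cone of $\Delta'$, so $\Delta'_u = \Delta'$ and no further refinement is needed. The construction of $W$ along $\pi_* \colon X(\Delta') \rightarrow X(\Delta)$ from \cite[Proposition 4.14]{Y23} identifies the valuation-theoretic data on $W$ with $\val_{u,\pi}$ under the lattice identification coming from $\pi$, so (g) immediately forces each cone of $\Delta'$ into a single piece $C^{\tau}_u(\chi,b)$; the reverse containment is automatic from the definition of $\Delta' = \Delta_{u,\pi}$. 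The delicate step is checking that the strong-convexity hypothesis really makes $W$ a well-defined mock toric variety with structure fan $\Delta'$, and that the identification of $\val_u$ on $W$ with $\val_{u,\pi}$ on $\pi^{-1}_\RR(\supp(\Delta))$ is an equality rather than merely an inequality on each cone.
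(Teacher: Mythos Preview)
Your proposal is correct and follows essentially the same route as the paper: parts (a)--(g) are handled exactly as you describe (direct lattice geometry for (a), then reducing to a single Laurent polynomial $g_\varphi$ and invoking the analogues of \cite[Proposition 6.8]{Y23}), and part (h) is settled by the valuation identification you mention. Two minor remarks: the linearity of $\val_{u(i),\pi}$ on $\sigma^\pi$ comes from \cite[Proposition 6.17]{Y23} rather than 6.16, and your ``delicate step'' in (h) is not actually delicate---the paper cites \cite[Proposition 4.12(b)]{Y23} for the identity $\val_W = \val_{Z,\pi}$, which gives $\val^\tau_u = \val^\tau_{u,\pi}$ on the nose and immediately forces $\tau = C^\tau_u(\chi,b)$, whence the refinement $\{C^\tau_u(\chi',b')\}$ of $\tau$ from Proposition~\ref{prop: VoPP2}(c) must coincide with the faces of $\tau$.
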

        \begin{proof}
            We prove the statements from (a) to (h) in order. 
            \begin{enumerate}
            	\item[(a)] We prove the statements from (I) to (III) in order. 
            		\begin{itemize}
            			\item[(I)] First, we prove $\sigma^\pi_\varphi \subset (\pi_\varphi)^{-1}_\RR(\sigma_\varphi)$. 
            			Let $v'\in\sigma^\pi$ be an element. 
            			Then $(\pi_\varphi)_\RR\circ(q'_\varphi)_\RR(v') = (q_\varphi)_\RR\circ(\pi_\varphi)_\RR(v')\in\sigma_\varphi$. 
            			Thus, $\sigma^\pi_\varphi = (q'_\varphi)_\RR(\sigma^\pi) \subset (\pi_\varphi)^{-1}_\RR(\sigma_\varphi)$. 
            			Next, we prove $\sigma^\pi_\varphi \supset (\pi_\varphi)^{-1}_\RR(\sigma_\varphi)$. 
            			Let $v''\in(\pi_\varphi)^{-1}_\RR(\sigma_\varphi)$ be an element. 
            			Then $(\pi_\varphi)_\RR(v'')\in (q_\varphi)_\RR(\sigma)$ and there exists $v\in\sigma$ such that $(q_\varphi)_\RR(v) = (\pi_\varphi)_\RR(v'')$. 
            			Let $w\in N'_\RR$ denote $s_\RR(v)$. 
            			Then $(\pi_\varphi)_\RR(v''-(q'_\varphi)_\RR(w)) = (q_\varphi)_\RR(v - \pi_\RR(w)) = 0$ and $v'' - (q'_\varphi)_\RR\in \ker((\pi_\varphi)_\RR)$. 
            			We remark that $\ker(q_\varphi\circ\pi) = \ker(\pi)\oplus s(N_\varphi)$, in particular, $\ker(\pi_\varphi) = q'_\varphi(\ker(\pi))$. 
            			Similarly, $\ker((\pi_\varphi)_\RR) = (q'_\varphi)_\RR(\ker(\pi_\RR))$. 
            			Hence, there exists $u\in\ker(\pi_\RR)$ such that $v'' - (q'_\varphi)_\RR(w) = (q'_\varphi)_\RR(u)$, and hence $v'' = (q'_\varphi)_\RR(u + w)$. 
            			Moreover, $\pi_\RR(u + w) = \pi_\RR(w) = v\in\sigma$, and hence $u + w\in \sigma^\pi$. 
            			Therefore, $v''\in \sigma^\pi_\varphi$. 
            			\item[(II)] It is enough to show that $(q'_\varphi)_\RR|_{\sigma^\pi}$ is injective. 
            			Let $v'_1, v'_2\in\sigma^\pi$ such that $(q'_\varphi)_\RR(v'_1) = (q'_\varphi)_\RR(v'_2)$. 
            			Then $v'_1 - v'_2\in s_\RR((N_\varphi)_\RR)$, and hence there exists $v\in (N_\varphi)_\RR$ such that $v'_1 - v'_2 = s_\RR(v)$ and  $v\in\langle\sigma\rangle\cap (N_\varphi)_\RR$. 
            			Because $\sigma$ is a rational polyhedral convex cone, $\langle\sigma\rangle\cap (N_\varphi)_\RR = 0$ from \cite[Proposition 3.2(b)]{Y23}. 
            			Thus, $v'_1 = v'_2$. 
            			\item[(III)] From the definition of $\sigma^\pi_\varphi$, $(q'_\varphi)_\RR(\langle\sigma^\pi\rangle) = \langle\sigma^\pi_\varphi\rangle$. 
            			Thus, $(q'_\varphi)(\langle\sigma^\pi\rangle\cap N')\subset \langle\sigma^\pi_\varphi\rangle\cap N'/s(N_\varphi)$. 
            			Moreover, from the proof of (II), $q'_\varphi|_{\langle\sigma^\pi\rangle\cap N'}$ is injective. 
            			
            			Hence, we show that $q'_\varphi|_{\langle\sigma^\pi\rangle\cap N'}$ is surjective. 
            			Let $v''\in \langle\sigma^\pi_\varphi\rangle\cap N'/s(N_\varphi)$ be an element. 
            			From (I), $(\pi_\varphi)_\RR(\langle\sigma^\pi_\varphi\rangle)\subset \langle\sigma_\varphi\rangle$, and hence $\pi_\varphi(v'')\in\langle\sigma_\varphi\rangle\cap N/N_\varphi$. 
            			From \cite[Proposition 3.2(d)]{Y23},  there exist $v\in \langle\sigma\rangle\cap N$ such that $\pi_\varphi(v'') = q_\varphi(v)$. 
            			Because $\pi_\varphi(v'' - q'_\varphi(s(v))) = \pi_\varphi(v'') - q_\varphi(\pi(s(v))) = \pi_\varphi(v'') - q_\varphi(v) = 0$, $v'' - q'_\varphi(s(v))\in\ker(\pi_\varphi)$. 
            			From the remark in the proof of (I), there exists $v'\in \ker(\pi)$ such that $q'_\varphi(v') = v'' - q'_\varphi(s(v))$. 
            			Because $v\in \langle\sigma\rangle\cap N$ and $v'\in \ker(\pi)$, $s(v) + v'\in \langle\sigma^\pi\rangle$. 
            			Therefore, $v'' = q'_\varphi(s(v) + v') \in q'_\varphi(\langle\sigma^\pi\rangle\cap N')$. 
            		\end{itemize}
                \item[(b)] From \cite[Proposition 6.17]{Y23}, for each $i\in S$, there exists $\omega_i\in M'_\varphi$ such that $\val^{\sigma^\pi}_{u(i)} = \val^{\sigma^\pi}_{\iota^*_\varphi(\chi^{\omega_i})}$. 
                Let $T$ denote a subset $\{\omega_i\mid i\in S\}$ of $M'_\varphi$ and $g_\varphi$ denote $\sum_{\omega\in T} \chi^{\omega}\in k[M'_\varphi]$. 
                Then for any $v'\in \sigma^\pi\cap N'$, we have $q'_\varphi(v)(g_\varphi) = \min_{\omega\in T}\{q'_\varphi(v)(\chi^\omega)\}$ because the definition of the torus invariant valuation on toric varieties. 
                This shows that for any $v'\in\sigma^\pi\cap N'$,  there exists the following equation:
                \begin{align*}
                    \val_{g_\varphi}\circ(q'_\varphi)_\RR(v')&= q'_\varphi(v')(g_\varphi)\\
                    &= \min_{\omega\in T}\{q'_\varphi(v')(\chi^\omega)\}\\
                    &= \min_{i\in S}\{q'_\varphi(v')(\chi^{\omega_i})\}\\
                    &= \min_{i\in S}\{\val_{Z,\pi}(v')({\iota'}^*_\varphi(\chi^{\omega_i}))\}\\
                    &= \min_{i\in S}\{\val_{{\iota'}^*_\varphi(\chi^{\omega_i}), \pi}(v')\}\\
                    &= \min_{i\in S}\{\val_{u(i), \pi}(v')\}\\
                    &= \val_{u, \pi}(v')
                \end{align*}
                Thus, from \cite[Lemma 8.13]{Y23}, the statement holds. 
                \item[(c)] We can check the statement likewise \cite[Proposition 6.8(b)]{Y23}. 
                In the proof of \cite[Proposition 6.8(b)]{Y23}, we used \cite[Proposition 6.3]{Y23}.
                Instead, we use \cite[Proposition 6.17]{Y23} and (a)-(III). 
                \item[(d)]  We can check the statement likewise \cite[Proposition 6.8(c)]{Y23}.
                \item[(e)]  Let $\omega\in \sigma^\vee\cap M$ be an element such that $\tau = \sigma\cap \omega^\perp$. 
                We remark that $\pi^*(\omega)\in (\sigma^\pi)^\vee\cap M'$ and $\tau^\pi = \sigma^\pi\cap \pi^*(\omega)^\perp$. In particular, $\tau^\pi\preceq\sigma^\pi$. 
                Now, we can check the statement likewise \cite[Proposition 6.8(d)]{Y23}.
                \item[(f)]  We can check the statement likewise \cite[Proposition 6.8(e)]{Y23}.
                \item[(g)]  We can check the statement likewise \cite[Proposition 6.8(f)]{Y23}. 
                \item[(h)] We remark that $\pi$ is compatible with the fans $\Delta'$ and $\Delta$ from (d) and (f). 
                Let $\tau\in \Delta'$ be a cone. 
                Then there exists $\sigma\in\Delta$ and $(\chi, b)\in\Omega^{\sigma^\pi}_{u, \pi}$ such that $\tau = C^{\sigma^\pi}_{u, \pi}(\chi, b)$. 
                From \cite[Proposition 4.12(b)]{Y23}, $\val_W = \val_{Z, \pi}$, in particular,  $\val^\tau_{u(i)} = \val^\tau_{u(i), \pi}$ for any $i\in S$ and $\val^\tau_{\chi} = \val^\tau_{\chi, \pi}$. 
                Hence, $\val^\tau_{u} = \val^\tau_{u, \pi}$ and $(\chi, b)\in \Omega^{\tau}_{u}$. 
                Moreover, from the definition of $C^{\tau}_{u}(\chi, b)$, we have $\tau = C^{\tau}_{u}(\chi, b)$. 
                From \ref{prop: VoPP2}(c), $\{C^\tau_u(\chi', b')\}_{(\chi', b')\in \Omega^\tau_u}$ is a refinment of $\tau$, and hence $\{C^\tau_u(\chi', b')\}_{(\chi', b')\in \Omega^\tau_u}$ is equal to the set of all faces of $\tau$. 
                This shows that $\Delta'_u = \Delta'$. 
            \end{enumerate}
        \end{proof}
        We construct $\Delta_{u, \pi}$ in Proposition \ref{prop: extend refinement along function}. 
        However, the above construction method is not practical in the sense that it is not suitable for computations using a programming code.
        The following proposition indicates that the construction of $\Delta_{u, \pi}$ itself can be achieved through straightforward calculations. 
        \begin{proposition}\label{prop: actual-configuration}
            We keep the notation in Proposition \ref{prop: extend refinement along function}. 
            Let $\sigma\in\Delta$ be a cone. 
            From \cite[Proposition 6.17]{Y23}, there exists $\omega_i\in M'$ for any $i\in S$ such that $\val_{u(i), \pi}(v') = \langle v', \omega_i\rangle$ for any $v'\in\sigma^\pi$. 
            We fix such $\omega_i\in M'$ for any $i\in S$ and let $D^\sigma_{u, \pi}$ be a rational polyhedral convex cone in  $(M'\oplus\ZZ)_\RR$ generated by the following set:
            \[
                \{(\omega', 0)\in(M'\oplus\ZZ)_\RR\mid\omega'\in(\sigma^\pi)^\vee\} \cup \{(\omega_i, 1)\in(M'\oplus\ZZ)_\RR\mid i\in S\}
            \]
            Let $C^\sigma_{u, \pi}$ denote a dual cone of $D^\sigma_{u, \pi}$ in $(N'\oplus\ZZ)_\RR$. 
            We remark that $C^\sigma_{u, \pi}$ is also a rational polyhedral convex cone in $(N\oplus\ZZ)_\RR$. 

            Then the following statements hold:
            \begin{enumerate}
                \item[(a)] The definition of $D^\sigma_{u, \pi}$ is independent of the choice of $\{\omega_i\}_{i\in S}$. 
                \item[(b)] For any $(v', a)\in C^\sigma_{u, \pi}$, we have that $v'\in\sigma^\pi$. 
                \item[(c)] Let $v'\in\sigma^\pi$ be an element and $a\in\RR$ be a real number. 
                Then $\val_{u, \pi}(v') + a \geq 0$ if and only if $(v', a)\in C^\sigma_{u, \pi}$. 
                \item[(d)] Let $\tau\preceq C^\sigma_{u, \pi}$ be a face. 
                We assume that $(0, 1)\notin \tau$. 
                Then for any $(v', a)\in \tau$, we have $a = -\val_{u, \pi}(v')$. 
                \item[(e)] For any $v'\in\sigma^\pi$, there exists a face $\tau\preceq C^\sigma_{u, \pi}$ such that $(v', -\val_{u, \pi}(v'))\in\tau$ and $(0, 1)\notin\tau$. 
                \item[(f)] Let $\Delta^{\sigma, u, \pi}$ denote the following set:
                \[
                    \Delta^{\sigma, u, \pi} = \{\tau\preceq C^\sigma_{u, \pi}\mid (0, 1) \notin\tau\}
                \]
                We remark that $\Delta^{\sigma, u, \pi}$ is a rational polyhedral fan in $(N'\oplus\ZZ)_\RR$. 
                Let $\pr_1$ denote a first projection from $N'\oplus\ZZ$ to $N'$. 
                Then the following subset $\Delta^\sigma_{u, \pi}$ is a rational polyhedral fan in $N'_\RR$ with $\supp(\Delta^\sigma_{u, \pi}) = \sigma^\pi$:
                \[
                    \Delta^\sigma_{u, \pi} = \{(\pr_1)_\RR(\tau)\mid \tau\in\Delta^{\sigma, u, \pi}\}
                \]
                \item[(g)] Let $E^\sigma_{u, \pi}$ denote a convex cone in $(N'\oplus\ZZ)_\RR$ generated by the following set:
                \[
                    \{(v', -\val_{u, \pi}(v'))\mid v'\in\sigma^\pi\}\cup\{(0, 1)\}
                \]
                Then $E^\sigma_{u, \pi}$ is $C^\sigma_{u, \pi}$. 
                \item[(h)] The cone $C^\sigma_{u, \pi}$ is strongly convex if and only if $\Delta^\sigma_{u, \pi}$ is a strongly convex fan in $N'_\RR$. 
                \item[(i)] For any $(\chi, b)\in\Omega^{\sigma^\pi}_{u, \pi}$, there exists $\tau\in\Delta^{\sigma, u, \pi}$ such that $C^{\sigma^\pi}_{u, \pi}(\chi, b) = (\pr_1)_\RR(\tau)$. 
                \item[(j)] Conversely, for any $\tau\in\Delta^{\sigma, u, \pi}$, there exists $(\chi, b)\in\Omega^{\sigma^\pi}_{u, \pi}$ such that $(\pr_1)_\RR(\tau) = C^{\sigma^\pi}_{u, \pi}(\chi, b)$. 
                Besides the statement (i), we have that $\{C^{\sigma^\pi}_{u, \pi}(\chi, b)\}_{(\chi, b)\in\Omega^{\sigma^\pi}_{u, \pi}} = \Delta^\sigma_{u, \pi}$. 
                \item[(k)] Let $\varsigma\in\Delta^\sigma_{u, \pi}$ be a cone. 
                Then there exists $\sigma\in\Delta$ and $\tau\in\Delta^{\sigma, u, \pi}$ such that $\varsigma = (\pr_1)_\RR(\tau)$ from (j). 
                Let $(v', a)\in \tau^\circ$ be an element. 
                The following equation holds. 
                \[
                    \{i\in S\mid \val^{\varsigma}_{u, \pi} = \val^{\varsigma}_{u(i), \pi} \} = \{i\in S\mid \langle (v', a), (\omega_i, 1)\rangle = 0\}
                \]
            \end{enumerate}
        \end{proposition}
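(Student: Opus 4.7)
The plan is to view $C^\sigma_{u,\pi}$ as the epigraph (in the last coordinate) of the concave, positively homogeneous function $-\val_{u,\pi}$ on $\sigma^\pi$, so that $\Delta^\sigma_{u,\pi}$ will be obtained by projecting the ``lower boundary'' faces. I would prove the items in order, using (c) and (d) as the geometric backbone from which the later items follow by short arguments or by duality.

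Items (a)--(c) are direct. For (a), if $\omega_i$ and $\omega'_i$ both realize $\val_{u(i),\pi}$ on $\sigma^\pi$, then $\omega_i - \omega'_i \in (\sigma^\pi)^\perp$, which lies in the cone generated by $(\sigma^\pi)^\vee$; this absorbs the ambiguity. Item (b) follows because pairing with generators $(\omega',0)$ for $\omega' \in (\sigma^\pi)^\vee$ forces $v' \in ((\sigma^\pi)^\vee)^\vee = \sigma^\pi$. For (c) I expand $\langle(v',a),(\omega_i,1)\rangle = \val_{u(i),\pi}(v') + a$ and take the minimum over $i$. For (d), the key trick is the decomposition $(v',a) = (v', -\val_{u,\pi}(v')) + (a + \val_{u,\pi}(v'))(0,1)$, whose two summands lie in $C^\sigma_{u,\pi}$ by (c); the face property of $\tau$ then forces $(0,1) \in \tau$ unless $a = -\val_{u,\pi}(v')$. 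For (e) I pick $i_0$ achieving the minimum $\val_{u,\pi}(v') = \langle v', \omega_{i_0}\rangle$; then $C^\sigma_{u,\pi} \cap (\omega_{i_0},1)^\perp$ contains $(v',-\val_{u,\pi}(v'))$ and excludes $(0,1)$ since $\langle(0,1),(\omega_{i_0},1)\rangle = 1$. Parts (f) and (g) then follow quickly: by (d), $\pr_1$ restricts to a bijection on each $\tau \in \Delta^{\sigma,u,\pi}$ with inverse $v' \mapsto (v',-\val_{u,\pi}(v'))$, which transports the fan structure and matches supports via (e); (g) is immediate from the same decomposition.

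For (h) I would use a lineality-space argument: $C^\sigma_{u,\pi}$ fails to be strongly convex exactly when its lineality space $L$ is nontrivial, and since $(0,-1) \notin C^\sigma_{u,\pi}$ we have $L \cap \RR(0,1) = 0$; hence $L \in \Delta^{\sigma,u,\pi}$ and $\pr_1(L) \in \Delta^\sigma_{u,\pi}$ is a nontrivial line, with the converse going through the bijection of (d). For (i), given $(\chi,b) \in \Omega^{\sigma^\pi}_{u,\pi}$ I use \cite[Proposition 6.17]{Y23} to write $\val_{\chi,\pi} = \langle\cdot,\omega_\chi\rangle$ on $\sigma^\pi$, so that $(\omega_\chi,b) \in D^\sigma_{u,\pi}$; then $\pr_1(C^\sigma_{u,\pi} \cap (\omega_\chi,b)^\perp) = C^{\sigma^\pi}_{u,\pi}(\chi,b)$ by a direct computation using (d). For the converse (j), given $\tau \in \Delta^{\sigma,u,\pi}$ I would pick $\eta = (w,c) \in D^\sigma_{u,\pi}$ in the relative interior of the dual face so that $\tau = C^\sigma_{u,\pi} \cap \eta^\perp$, obtain $c > 0$ from $(0,1) \notin \tau$, clear denominators to assume $c \in \ZZ_{>0}$, and then lift $w \in M'$ to a unit $\chi \in k[Z_\pi]^*$ with $\val_{\chi,\pi} = \langle\cdot,w\rangle$ on $\sigma^\pi$. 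The hard part will be this lifting: producing a global unit with a prescribed piecewise-linear valuation is stronger than the local information in \cite[Proposition 6.17]{Y23}, so I plan to work inside a single chart $\varphi \in \Phi$ with $\sigma \in \Delta_\varphi$, pull back a character from $k[M'_\varphi]$ via $\iota^*_\varphi$, and correct by a global unit of $k[Z_\pi]^*$ if needed. Finally, (k) follows because $\val_{u,\pi}$ and $\val_{u(i),\pi}$ are both linear on $\varsigma$ with $\val_{u,\pi} \le \val_{u(i),\pi}$, so equality at an interior point $v'$ forces the nonnegative linear difference to vanish identically; the reformulation $\langle(v',a),(\omega_i,1)\rangle = 0$ then follows from $a = -\val_{u,\pi}(v')$ via (d).
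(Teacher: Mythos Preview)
Your proposal is correct and follows essentially the same route as the paper. The only minor variants are in (e), where the paper instead takes the face containing $(v',-\val_{u,\pi}(v'))$ in its relative interior and contradicts (c) if $(0,1)$ lies in it, and in (j), where rather than correcting after pullback the paper first replaces $\omega$ by its component $\omega_1\in(q'_\varphi)^*(M'_\varphi)$ modulo $(\sigma^\pi)^\perp$ (leaving the face unchanged) so that $\chi=\iota'^*(\chi^{\omega_1})$ already has the required valuation.
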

        \begin{proof}
            We prove the statement from (a) to (k) in order. 
            \begin{enumerate}
                \item[(a)] Let $i\in S$ be an integer and $\omega'_i\in M'$ be an element such that $\val_{u(i), \pi}(v') = \langle v', \omega'_i\rangle$ for any $v'\in \sigma^\pi$. 
                Then $\omega'_i - \omega_i\in (\sigma^\pi)^\vee$ because $\langle v', \omega'_i - \omega_i\rangle = 0$ for any $v'\in \sigma^\pi$. 
                Thus, $(\omega'_i, 1)\in D^\sigma_{u, \pi}$ from the definition of $D^\sigma_{u, \pi}$. 
                This shows that the definition of $D^\sigma_{u, \pi}$ is independent of the choice of $\{\omega_i\}_{i\in S}$. 
                \item[(b)] Let $(v', a)\in C^\sigma_{u, \pi}$ be an element. 
                For any $\omega'\in(\sigma^\pi)^\vee$, we have $(\omega', 0)\in D^\sigma_{u, \pi}$. 
                Thus, $\langle v', \omega'\rangle = \langle (v', a), (\omega', 0)\rangle\geq 0$ for any $\omega'\in(\sigma^\pi)^\vee$. 
                Therefore, $v'\in\sigma^\pi$. 
                \item[(c)] From the definition of $C^\sigma_{u, \pi}$, we can check that $(v', a)\in C^\sigma_{u, \pi}$ if and only if $\langle v', \omega_i\rangle \geq -a$ for any $i\in S$. 
                Because $v'\in \sigma^\pi$, $\val_{u(i), \pi}(v') = \langle v', \omega_i\rangle$ for any $i\in S$, we can check the statement follows from the definition of $\val_{u, \pi}$.  
                \item[(d)] Let $(v', a)\in\tau$ be an element. 
                Then $v'\in\sigma^\pi$ from (b). 
                From (c), $a\geq -\val_{u, \pi}(v')$. 
                Moreover, $(v', -\val_{u, \pi}(v'))$ and $(0, a +\val_{u, \pi}(v'))\in C^\sigma_{u, \pi}$. 
                Because $\tau\preceq C^\sigma_{u, \pi}$, we have that $(v', -\val_{u, \pi}(v'))$ and $(0, a + \val_{u, \pi}(v'))\in\tau$. 
                Thus, from the assumption of $\tau$, we have that $a = -\val_{u, \pi}(v')$. 
                \item[(e)] Let $\tau\preceq C^\sigma_f$ be a face such that $(v', -\val_{u, \pi}(v'))\in\tau^\circ$. 
                If $\tau$ contains $(0, 1)$, then there exists $\epsilon>0$ such that $(v', -\val_{u, \pi}(v') - \epsilon)\in\tau$ because $(v', -\val_{u, \pi}(v'))\in\tau^\circ$. 
                However, it is a contradiction to (c). 
                Thus, we have that $(0, 1)\notin\tau$. 
                \item[(f)] Because $(\pr_1)_\RR|_{\supp(\Delta^{\sigma, u, \pi})}$ is injective from (d). 
                 From \cite[Lemma 8.1(c)]{Y23}, $\Delta^{\sigma}_{u, \pi}$ is a polyhedral convex fan. 
                A morphism $\pr_1$ is a lattice morphism, and $\Sigma^{\sigma, u, \pi}$ is a rational polyhedral fan so that $\Delta^{\sigma}_{u, \pi}$ is a rational polyhedral convex fan.  
                Moreover, $\supp(\Delta^\sigma_{u, \pi}) = \sigma^\pi$ from (a) and (e). 
                \item[(g)] We can check easily that $E^\sigma_{u, \pi}\subset C^\sigma_{u, \pi}$ from (c). 
                Let $(v', a)\in C^\sigma_{u, \pi}$be an element.
                From (b), $v'\in\sigma^\pi$. 
                Moreover, from (c), $a +\val_{u, \pi}(v')\geq 0$. 
                Thus, $(v', a) = (v', -\val_{u, \pi}(v')) + (0, a +\val_{u, \pi}(v'))\in E^\sigma_{u, \pi}$. 
                \item[(h)]
                Because $(\pr_1)_\RR|_{\supp(\Delta^{\sigma, u, \pi})}$ is injective, the strong convexity of $\Delta^{\sigma, u, \pi}$ is equivalent to that of $\Delta^\sigma_{u, \pi}$ from \cite[Lemma 8.1(b)]{Y23}. 
                Let $\tau$ denote the minimal face of $C^\sigma_{u, \pi}$. 
                We remark that $\tau$ is a linear subspace of $(N'\oplus\ZZ)_\RR$. 
                Because $C^\sigma_{u, \pi}$ does not contain $(0, -1)$, $\tau$ does not contain $(0, 1)$. 
                Thus, $\tau\in \Delta^{\sigma, u, \pi}$ and 
                the statement follows. 
                \item[(i)] 
                Let $(\chi, b)\in\Omega^{\sigma^\pi}_{u, \pi}$ be an element. 
                From \cite[Proposition 6.17]{Y23}, there exists $\omega\in M'$  such that $\val_{\chi, \pi}(v') = \langle v', \omega\rangle$ for any $v'\in\sigma^\pi$. 
                Because $(\chi, b)\in\Omega^{\sigma^\pi}_{u, \pi}$, we have $\langle v', \omega\rangle - b\val_{u, \pi}(v') \geq 0$ for any $v'\in \sigma^\pi$. 
                This indicates that $(\omega, b)\in D^\sigma_{u, \pi}$ from (g). 
                Let $\tau$ denote $C^\sigma_{u, \pi}\cap (\omega, b)^{\perp}\preceq C^\sigma_{u, \pi}$. 
                Then $\tau\in\Delta^{\sigma, u, \pi}$ because $b > 0$. 
                Moreover, we can check that $(\pr_1)_\RR(\tau) = C^{\sigma^\pi}_{u, \pi}(\chi, b)$. 
                \item[(j)] Let $\tau\in\Delta^{\sigma, u, \pi}$ be a cone. 
                Then there exists $(\omega, b)\in D^\sigma_{u, \pi}$ such that $\tau = C^\sigma_{u, \pi}\cap (\omega, b)^{\perp}$. 
                Because $D^\sigma_{u, \pi}$ is rational polyhedral cone and $(0, 1)\notin\tau$, we may assume that $\omega\in M'$ and $b\in\ZZ_{>0}$. 
                Let $\varphi\in\Phi$ be an element such that $\sigma\in\Delta_\varphi$. 
                Then there exists a section $s'$ of $q'_\varphi\colon N'\rightarrow N'/s(N_\varphi)$ such that $\sigma^\pi\subset s'_\RR((N'/s(N_\varphi))_\RR)$ from \cite[Lemma 8.2]{Y23} and the condition (1) and (3) in \cite[Definition 3.1]{Y23}. 
                In particular, $\ker(s'^*)\subset M'\cap (\sigma^\pi)^\perp$. 
                Let $M'_\varphi$ denote the dual lattice of $N'/s(N_\varphi)$. 
                Then there exists the direct summation $M = \ker(s'^*)\oplus (q'_\varphi)^*(M'_\varphi)$. 
                Thus, there exists $\omega_1\in (q'_\varphi)^*(M'_\varphi)$ and $\omega_2\in \ker(s'^*)$ such that $\omega = \omega_1 + \omega_2$. 
                Because $\omega_2\in\ker(s'^*)\subset (\sigma^\pi)^\perp$, we have $(\omega_1, b)\in D^\sigma_{u, \pi}$ and $\tau = C^\sigma_{u, \pi}\cap (\omega_1, b)^{\perp}$ from (a). 
                Let $\chi$ denote $\iota'^*(\chi^{\omega_1})\in k[Z^\pi]^*$. 
                Then $\val_{\chi, \pi}(v') = \langle v', \omega_1\rangle$ for any $v'\in \sigma^\pi$ because $\omega_1\in (q'_\varphi)^*(M'_\varphi)$. 
                Thus, we can check that $(\chi, b)\in \Omega^{\sigma^\pi}_{u, \pi}$ and $(\pr_1)_\RR(\tau) = C^{\sigma^\pi}_{u, \pi}(\chi, b)$. 
                \item[(k)] We recall that $v'\in \sigma^\pi$ from (b) and $a = -\val_{u, \pi}(v')$ from (d). 
                Let $i\in S$ be an integer. 
                Thus, $\langle (v', a), (\omega_i, 1)\rangle = 0$ if and only if $\val_{u, \pi}(v') = \val_{u(i), \pi}(v')$. 
                Moreover, Because $(\pr_1)_\RR|_\tau$ is injective, $v'\in \varsigma^\circ$. 
                Hence, $\val_{u, \pi}(v') = \val_{u(i), \pi}(v')$ if and only if $\val^\varsigma_{u, \pi} = \val^\varsigma_{u(i), \pi}$ from \cite[Proposition 6.17]{Y23} and Proposition \ref{prop: extend refinement along function}(g) and the definition of $\val_{u, \pi}$. 
            \end{enumerate}
        \end{proof}
        In general, the commputation of $D^\sigma_{u, \pi}$ is very difficult because we have to find $\omega_i\in M'$ such that $\val_{u(i), \pi}(v') = \langle v', \omega_i\rangle$ for any $v'\in\sigma^\pi$ and $i\in S$. 
        In fact, the following lemma shows that we can compute $D^\sigma_{u, \pi}$ briefly if $\{u(i)\}_{i\in S}$ is specific. 
        \begin{lemma}\label{lem: supplement-of-valuation3}
        We keep the notation in Proposition \ref{prop: actual-configuration}.
        We assume that for any $i\in S$, there exists $\omega_i\in M'$ and $c_i\in k^*$ such that $u(i) = c_i{\iota'}^*(\chi^{\omega_i})$. 
        Then $D^\sigma_{u, \pi}$ is generated by the following elements: 
        \[
            \{(\omega', 0)\in(M'\oplus\ZZ)_\RR\mid\omega'\in(\sigma^\pi)^\vee\} \cup \{(\omega_i, 1)\in(M'\oplus\ZZ)_\RR\mid i\in S\}
        \]
    \end{lemma}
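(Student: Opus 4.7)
The plan is to reduce the lemma to Proposition \ref{prop: actual-configuration}(a), which asserts that $D^\sigma_{u, \pi}$ is independent of the choice of representatives $\omega_i \in M'$ satisfying $\val_{u(i), \pi}(v') = \langle v', \omega_i \rangle$ for all $v' \in \sigma^\pi$. Thus the entire content to be verified is that, under the stated hypothesis $u(i) = c_i {\iota'}^*(\chi^{\omega_i})$ with $c_i \in k^*$, the particular elements $\omega_i$ appearing in the factorization of $u(i)$ do serve as valid representatives.

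First, I would use the fact that $c_i \in k^*$ acts trivially on valuations: from \cite[Proposition 6.7(b)]{Y23} (or the analogous additivity for $\val_{-, \pi}$ coming from \cite[Proposition 6.16]{Y23}), one gets $\val_{u(i), \pi} = \val_{c_i, \pi} + \val_{{\iota'}^*(\chi^{\omega_i}), \pi} = \val_{{\iota'}^*(\chi^{\omega_i}), \pi}$, since $\val_{c_i, \pi} \equiv 0$.

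Next, I would invoke the explicit description of the torus-invariant valuation on $k[M'_\varphi]$: for $v' \in \sigma^\pi$, pick $\varphi \in \Phi$ with $\sigma \in \Delta_\varphi$ and apply \cite[Proposition 6.17]{Y23} together with the computation carried out inside the proof of Proposition \ref{prop: extend refinement along function}(b), giving $\val_{{\iota'}^*(\chi^{\omega_i}), \pi}(v') = \langle v', \omega_i\rangle$. Combining with the previous step yields $\val_{u(i), \pi}(v') = \langle v', \omega_i\rangle$ on $\sigma^\pi$, exactly the condition needed so that $\omega_i$ is an admissible choice in the construction of $D^\sigma_{u, \pi}$.

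Finally, applying Proposition \ref{prop: actual-configuration}(a) allows one to conclude that $D^\sigma_{u, \pi}$ is generated by the prescribed set, completing the proof. There is no serious obstacle here; the argument is essentially a bookkeeping verification that the factorization $u(i) = c_i {\iota'}^*(\chi^{\omega_i})$ gives a representative for the valuation directly, bypassing any search for a general $\omega_i$.
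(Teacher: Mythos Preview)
Your proposal is correct and follows essentially the same approach as the paper: verify that the given $\omega_i$ satisfy $\val_{u(i),\pi}(v')=\langle v',\omega_i\rangle$ on $\sigma^\pi$, then invoke Proposition~\ref{prop: actual-configuration}(a). The only cosmetic difference is that the paper compresses your two-step verification (triviality of $c_i$ plus the character computation) into a single appeal to \cite[Lemma~8.13]{Y23} and \cite[Corollary~4.13]{Y23}, whereas you route through \cite[Proposition~6.16, 6.17]{Y23} and the argument of Proposition~\ref{prop: extend refinement along function}(b); these are equivalent bookkeeping choices.
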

    \begin{proof}
        From \cite[Lemma 8.13]{Y23} and \cite[Corollary 4.13]{Y23}, $\val_{u(i), \pi}(v') = \langle v', \omega_i\rangle$ for any $v'\in \sigma^\pi$ and $i\in S$. 
        Thus, from Proposition \ref{prop: actual-configuration}(a), the statement holds. 
    \end{proof}
        
        As mentioned at the beginning of this section, the computation of stable birational volume of hypersurfaces of mock toric varieties assumes certain conditions on the fan. 
        The following proposition describes a method to overcome these obstacles, which is geometrically referred to as the semistable reduction.
        \begin{lemma}\label{lem: modified fan}
            We keep the notation in Definition \ref{def: model polytope}.  
            Let $(Z^1_{\pi^1}, S, u\oplus\kappa)$ be a model mock polytope of a mock polytope $(Z_\pi, S, u)$ along $\kappa$. 
            Let $n$ be a positive integer. 
            Let $r_n\colon N'\oplus\ZZ\rightarrow N'\oplus\ZZ$ be a linear morphism defined as $r_n(v, c) = (nv, c)$ for $(v, c)\in N'\oplus\ZZ$. 
            Then the following statements follow:
            \begin{enumerate}
                \item[(a)] $n\val_{u\oplus\kappa, \pi^1} = \val_{u\oplus{n\kappa}, \pi^1}\circ (r_n)_\RR|_{(\pi^1_\RR)^{-1}(\supp(\Delta\times\Delta_!))}$
                \item[(b)] Let $\sigma\in\Delta\times\Delta_!$ be a cone, $\chi\in k[Z_\pi]^*$ be a unit, $a\in \ZZ$ be an integer, and $b\in\ZZ_{>0}$ be a positive integer. 
                Let $\sigma^\pi$ denote $(\pi^1_\RR)^{-1}(\sigma)$. 
                Then $(\chi t^a, b)\in\Omega^{\sigma^\pi}_{u\oplus\kappa, \pi^1}$  if and only if $(\chi t^{an}, b)\in \Omega^{\sigma^\pi}_{u\oplus{n\kappa}, \pi^1}$. 
                In this case, $(r_n)_\RR(C^{\sigma^\pi}_{u\oplus\kappa, \pi^1}(\chi t^a, b))=C^{\sigma^\pi}_{u\oplus{n\kappa}, \pi^1}(\chi t^{an}, b)$. 
                \item[(c)] We use the notation in (b). 
                Then $(\chi t^a, b)\in \Omega^{\sigma^\pi}_{u\oplus{n\kappa}, \pi^1}$ if and only if $(\chi^n t^a, nb)\in \Omega^{\sigma^\pi}_{u\oplus\kappa, \pi^1}$. 
                In this case, $(r_n)_\RR(C^{\sigma^\pi}_{u\oplus\kappa, \pi^1}(\chi^n t^a, bn))=C^{\sigma^\pi}_{u\oplus{n\kappa}, \pi^1}(\chi t^a, b)$. 
                \item[(d)] 
                The map $(r_n)_\RR$ induces a one-to-one correspondence with cones in $(\Delta\times\Delta_!)_{u\oplus\kappa, \pi^1}$ and those in $(\Delta\times\Delta_!)_{u\oplus{n\kappa}, \pi^1}$. 
                Moreover, if $(\Delta\times\Delta_!)_{u\oplus\kappa}$ is strongly convex, then $(\Delta\times\Delta_!)_{u\oplus{l\kappa}}$ is strongly convex for any $l\in\ZZ_{>0}$. 
                \item[(e)] There exists $m\in\ZZ_{>0}$ satisfying the following condition (*):
                \begin{itemize}
                    \item[(*)] For any $l\in \ZZ_{>0}$, there exists a strongly convex rational polyhedral fan $\Delta'_l$ in $(N'\oplus\ZZ)_\RR$ such that $\Delta'_l$ is a refinement of $(\Delta\times\Delta_!)_{u\oplus{ml\kappa}, \pi^1}$, generically unimodular, specifically reduced, and compactly arranged. 
                \end{itemize}
            \end{enumerate}
        \end{lemma}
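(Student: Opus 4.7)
The plan is to handle (a)--(d) as essentially formal rescaling identities, and to obtain (e) by a toric semistable-reduction argument that exploits the scaling freedom already established in (a)--(d).

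For (a), I would unwind the definitions directly. Since $(u\oplus\kappa)(i)=u(i)t^{\kappa(i)}$, the additivity of $\val_{\cdot,\pi^1}$ on products (together with the fact that $\val_{u(i),\pi^1}$ extends $\val_{u(i),\pi}$ and is linear on each cone by \cite[Proposition 6.17]{Y23}) gives
\[
\val_{(u\oplus\kappa)(i),\pi^1}(v,c)=\val_{u(i),\pi}(v)+c\kappa(i).
\]
Using that each $\val_{u(i),\pi}$ is preserved by non-negative scaling (Proposition \ref{prop: VoPP1}(a) and its $\pi^1$-analogue), we compute
\[
\val_{u\oplus n\kappa,\pi^1}(nv,c)=\min_{i\in S}\bigl(\val_{u(i),\pi}(nv)+cn\kappa(i)\bigr)=n\val_{u\oplus\kappa,\pi^1}(v,c).
\]
Taking minima commutes with scaling by $n>0$, so the identity holds on $(\pi^1_\RR)^{-1}(\supp(\Delta\times\Delta_!))$ by continuity.

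Parts (b) and (c) are immediate consequences of (a) combined with the analogous scaling identity for $\val_{\chi t^a,\pi^1}$, namely $\val_{\chi t^{an},\pi^1}(nv,c)=n\val_{\chi t^a,\pi^1}(v,c)$ and $\val_{\chi^n t^a,\pi^1}(v,c)=n\val_{\chi t^{a/n},\pi^1}(v,c)$ (formally, by comparing linear functionals on cones). The inequality defining $\Omega^{\sigma^\pi}_{u\oplus\kappa,\pi^1}$ just gets multiplied by $n$ under $(r_n)_\RR$, and the equality locus defining $C^{\sigma^\pi}_{u\oplus\kappa,\pi^1}(\chi t^a,b)$ is transported by $(r_n)_\RR$ to the corresponding equality locus for the rescaled datum. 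Part (d) then follows: $(r_n)_\RR$ is a linear bijection on $(N'\oplus\ZZ)_\RR$ that restricts to a lattice inclusion, and the explicit bijection from (b)--(c) on generators extends to one on the whole fans $(\Delta\times\Delta_!)_{u\oplus\kappa,\pi^1}$ and $(\Delta\times\Delta_!)_{u\oplus n\kappa,\pi^1}$. Strong convexity is preserved since $(r_n)_\RR$ is an isomorphism of vector spaces and carries no line to zero.

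The main obstacle is (e), which is the semistable-reduction step. I would proceed as follows. By (d), fix an arbitrary positive integer $m_0$ and pass to $(\Delta\times\Delta_!)_{u\oplus m_0\kappa,\pi^1}$; take any simplicial refinement $\widetilde\Delta$ of it (standard for rational polyhedral fans). The fan $\widetilde\Delta$ is not yet generically unimodular or specifically reduced, but both defects are controlled by a finite list of lattice indices and a finite list of denominators appearing in the primitive generators and in the values $\val_{u\oplus m_0\kappa,\pi^1}$ on each ray. Choose $m$ to be a common multiple of all of these, so that after further rescaling by $l\in \ZZ_{>0}$, the image under $(r_{ml})_\RR$ of any simplicial refinement becomes unimodular on each maximal cone and the toroidal local equation $\chi^\omega-t^q$ acquires integer exponents with $q=1$ (which is the specifically-reduced condition); compact arrangement is arranged by further refining the bounded part (which is preserved under $(r_{ml})_\RR$ and does not affect unimodularity if the refinement is chosen simplicial and unimodular). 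The key point is that the scaling factor $m$ can be chosen once and for all, independent of $l$, because enlarging $l$ only makes the divisibility conditions \emph{easier} to satisfy; then $\Delta'_l$ is obtained by pulling back a fixed simplicial unimodular compactly-arranged refinement under $(r_{ml})^{-1}_\RR$, which by (d) is a refinement of $(\Delta\times\Delta_!)_{u\oplus ml\kappa,\pi^1}$ as required.
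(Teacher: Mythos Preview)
Your arguments for (a)--(d) are fine and match the paper's approach: direct unwinding for (a), and then the inequalities/equalities defining $\Omega$ and $C$ transform under $(r_n)_\RR$ exactly as you say.

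Part (e), however, has a genuine gap. You start from a merely \emph{simplicial} refinement $\widetilde\Delta$ and claim that after applying $(r_{ml})_\RR$ the image ``becomes unimodular on each maximal cone.'' This is false. The map $(r_n)_\RR$ sends $(v,0)$ to $(nv,0)$, so any cone contained in $N'_\RR\times\{0\}$ is mapped to itself as a subset of $(N'\oplus\ZZ)_\RR$; its primitive ray generators, hence its lattice index, are unchanged. Thus $(r_n)_\RR$ \emph{preserves} generic unimodularity but can never \emph{create} it. (For cones meeting $c>0$ the situation is even worse: if a $2$-cone is generated by $(v_1,1),(v_2,1)$, its image is generated by $(mv_1,1),(mv_2,1)$, and unimodularity would require $m(v_2-v_1)$ primitive, which forces $m=1$.) So your scaling argument cannot repair a non-unimodular simplicial refinement.

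The paper avoids this by starting instead with a \emph{unimodular} refinement $\Delta''$ of $(\Delta\times\Delta_!)_{u\oplus\kappa,\pi^1}$, which exists by standard toric resolution. Then $\Delta''_m:=\{(r_m)_\RR(\tau)\mid\tau\in\Delta''\}$ is automatically generically unimodular (since the generic cones are fixed), and is a refinement of $(\Delta\times\Delta_!)_{u\oplus m\kappa,\pi^1}$ by (d). Compact arrangement is not ``arranged by further refining the bounded part'' as you suggest, but follows from \cite[Proposition~7.4(b)]{Y23}. The only remaining condition is \emph{specifically reduced}: for each ray $\gamma\in\Delta''_{\spe}$ there is a unique $v_\gamma\in N'_\Q$ with $(v_\gamma,1)\in\gamma$, and one chooses $m_0$ clearing all denominators so that $m_0v_\gamma\in N'$; then $(m_0v_\gamma,1)$ is the primitive generator of $(r_{m_0})_\RR(\gamma)$. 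Having found one good pair $(m_0,\Delta''_{m_0})$, the push-forward $\Delta'_l:=\{(r_l)_\RR(\tau)\mid\tau\in\Delta''_{m_0}\}$ works for every $l$, since (as the paper checks) each of the three properties is preserved under $(r_l)_\RR$.
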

        \begin{proof}
            We prove the statements from (a) to (e) in order. 
            \begin{enumerate}
                \item[(a)] 
                Let $v' = (v, c)\in (\pi^1_\RR)^{-1}(\supp(\Delta\times\Delta_!))$ be an element. 
                For any $i\in S$, the following equation folds:
                \begin{align*}
                    \val_{u\oplus{n\kappa}(i), \pi^1}\circ r_n(v')&= \val_{u\oplus{n\kappa}(i), \pi^1}\circ r_n(v, c)\\
                    &= \val_{u\oplus{n\kappa}(i), \pi^1}(nv, c)\\
                    &= \val_{u(i)t^{n\kappa(i)}, \pi^1}(nv, c)\\
                    &= \val_{u(i), \pi}(nv) + nc\kappa(i)\\
                    &= n(\val_{u(i), \pi}(v) + c\kappa(i))\\
                    &= n(\val_{u(i)t^{\kappa(i)}, \pi^1}(v, c))\\
                    &= n\val_{u\oplus\kappa(i), \pi^1}(v')\\
                \end{align*}
                Thus, the statement holds from the definition of $\val_{u\oplus n\kappa, \pi^1}$ and  $\val_{u\oplus \kappa, \pi^1}$.
                \item[(b)] We remark that the restriction $(r_n)_\RR|_{\sigma^\pi}$ is an automorphism of $\sigma^\pi$ from the definition of $r_n$. 
                Moreover, for any $\chi\in k[Z_\pi]^*$ and  $a\in\ZZ$, we can check that $\val_{\chi t^{an}, \pi^1} \circ (r_n)_\RR|_{ (\pi^1_\RR)^{-1}(\supp(\Delta\times\Delta_!))} = n\val_{\chi t^{a}, \pi^1}$ from the similar argument in the proof of (a). 
                Thus, we can check that the statement holds. 
                \item[(c)] We remark that for any $\chi\in k[Z_\pi]^*$ and $a\in\ZZ$, we have $n(\frac{1}{n}\val_{\chi^nt^a, \pi^1}) = \val_{\chi t^a, \pi^1}\circ (r_n)_\RR|_{ (\pi^1_\RR)^{-1}(\supp(\Delta\times\Delta_!))}$. 
                Then, we can check that the statement holds from the similar argument in the proof of (b). 
                \item[(d)] The restriction $(r_n)_\RR|_{(\pi^1)^{-1}(\supp(\Delta\times\Delta_!))}$ is an automorphism of $(\pi^1)^{-1}(\supp(\Delta\times\Delta_!))$. 
                Thus, from (b) and (c), $r_n$ induces a one-to-one correspondence with cones in $(\Delta\times\Delta_!)_{u\oplus\kappa, \pi^1}$ and those in $(\Delta\times\Delta_!)_{u\oplus{n\kappa}, \pi^1}$. 
                
                For the latter part of the statement, we can check it because $(r_l)_\RR$ is a linear isomorphism for any $l\in\ZZ_{>0}$. 
                \item[(e)] Let $m\in\ZZ_{>0}$ be a positive integer and $\Delta'$ be a strongly convex rational polyhedral fan in $(N'\oplus\ZZ)_\RR$ which is a refinement of $(\Delta\times\Delta_!)_{u\oplus m\kappa, \pi^1}$. 
                We assume that $\Delta'$ is generically unimodular, specifically reduced, and compactly arranged. 
                For $l\in\ZZ_{>0}$, let $\Delta'_l$ denote the following set:
                \[
                    \Delta'_l = \{(r_l)_\RR(\tau)\mid\tau\in\Delta'\}
                \]
                From \cite[Lemma 8.1(d)]{Y23}, $\Delta'_l$ is a strongly convex rational polyhedral fan in $(N'\oplus\ZZ)_\RR$. 
                Moreover, from (d), $\Delta'_l$ is a refinement of $(\Delta\times\Delta_!)_{u\oplus{ml\kappa}, \pi^1}$. 
                Now, we show that $\Delta'_l$ is also generically unimodular, specifically reduced, and compactly arranged for any $l\in\ZZ_{>0}$. 
                
                From the definition of $r_l$, we can check that $\Delta'_l$ is generically unimodular. 
                
                Let $\tau\in\Delta'$ be a cone. 
                Then from the definition of $r_l$, we can check that $\tau\in\Delta'_{\spe}$ if and only if $(r_l)_\RR(\tau)\in(\Delta'_l)_{\spe}$. 
                Moreover, we can check that $\tau\in\Delta'_{\bdd}$ if and only if 
                $(r_l)_\RR(\tau)\in(\Delta'_l)_{\bdd}$. 
                Therefore, from (d), $\Delta'_l$ is compactly arranged. 

                We remark that $(r_l)_\RR$ induces a one-to-one correspondence with $\gamma\in\Delta'_{\spe}$ such that $\dim(\gamma) = 1$ and those in $\Delta'_l$ from the definition of $r_l$.
                Let $\gamma\in\Delta'_{\spe}$ be a cone such that $\dim(\gamma) = 1$ and 
                $(v, c)\in N'\oplus\ZZ$ be a minimal generator of $\gamma$. 
                From the assumption, $c = 1$. 
                Thus, $(lv, 1)$ is a generator of $(r_l)_\RR(\gamma)$. 
                Hence, $\Delta'_l$ is specifically reduced. 

                From the above argument, it is enough to show that the following statement (**) holds:
                \begin{enumerate}
                    \item[(**)] There exists $m\in\ZZ_{>0}$ and $\Delta'$ be a strongly convex rational polyhedral fan in $(N'\oplus\ZZ)_\RR$ such that $\Delta'$ is a refinement of $(\Delta\times\Delta_!)_{u\oplus m\kappa, \pi^1}$, generically unimodular, specifically reduced, and compactly arranged. 
                \end{enumerate}
                Let $\Delta''$ be an unimodular refinement of $(\Delta\times\Delta_!)_{u\oplus\kappa, \pi^1}$. 
                For a positive integer $m$, let $\Delta''_m$ denote the following set:
                \[
                    \Delta''_m = \{(r_m)_\RR(\tau)\mid\tau\in\Delta''\}
                \]
                From \cite[Proposition 7.4(b)]{Y23}, we can check that $\Delta''_m$ is a strongly convex rational polyhedral fan in $(N'\oplus\ZZ)_\RR$, a refinement of $(\Delta\times\Delta_!)_{u\oplus m\kappa, \pi^1}$, generically unimodular, and compactly arranged for any $m\in\ZZ_{>0}$. 
                Let $\Gamma$ denote the following subset of $\Delta''$:
                \[
                    \Gamma = \{\gamma\in\Delta''_{\spe}\mid \dim(\gamma) = 1\}
                \]
                For $\gamma\in\Gamma$, let $v_\gamma$ denote an element in $N'_\Q$ such that $(v_\gamma, 1)\in\gamma$. 
                Because $|\Gamma|$ is finite, there exists $m_0\in\ZZ_{>0}$ such that $m_0v_\gamma\in N'$ for any $\gamma \in\Gamma$. 
                Therefore, we can check that $\Delta''_{m_0}$ is specifically reduced. 
            \end{enumerate}
        \end{proof}
        From Lemma\ref{lemma: polytope and stratification}, we can compute the intersection of a hypersurface and a stratum of $Z_u$ for a general section in the linear system defined by $[u]$.  
        However, calculating this in practice is difficult in general. 
        Indeed, this is also a different point between toric varieties and mock toric varieties.
        The following proposition indicates that, under favorable conditions on the cone, the computation can be relatively straightforward.
        \begin{proposition}\label{prop: computation of reduction step}
            We use the notation in \cite[Definition 4.11]{Y23} and the following notation:
            \begin{itemize}
                \item Let $\sigma$ be a strongly convex rational polyhedral cone in $N'_\RR$. 
                We assume that $\pi_\RR(\sigma) = \{0\}$. 
                Let $\tau$ denote $p_\RR(\sigma)$. 
                \item Let $\Delta_1$ denote a fan whose cones are all faces of $\sigma$ and $W$ denote a mock toric variety induced by $Z$ along $\pi_*\colon X(\Delta_1)\rightarrow X(\Delta)$ and $s$. 
            \end{itemize}
            Then the following statements follow:
            \begin{enumerate}
                \item[(a)] The set $\tau$ is a strongly convex rational polyhedral cone in $(N_1)_\RR$. 
                \item[(b)] Let $\varphi\in\Phi$ be an element. 
                Then, all squares in the following diagram are Cartesian squares: 
                \begin{equation*}
                    \begin{tikzcd}
                        W_\sigma\ar[r]\ar[d]& O_\sigma\ar[r]\ar[d]& O_{\sigma_\varphi}\ar[r]\ar[d]& O_\tau\ar[d]\\
                        W(\sigma)\ar[r, "\iota'"]\ar[d]& X(\sigma)\ar[r, "(q'_\varphi)_*"]\ar[d, "\pi_*"]& X(\sigma_\varphi)\ar[r, "(p_\varphi)_*"]\ar[d, "(\pi_\varphi)_*"]& X(\tau)\ar[d]\\
                        Z^\circ\ar[r, "\iota"]&T_{N}\ar[r, "(q_\varphi)_*"]&T_{N/N_\varphi}\ar[r]&\Spec(k)
                    \end{tikzcd}
                \end{equation*}
                \item[(c)] Let $\epsilon^*$ denote the ring morphism $k[Z^\circ]\rightarrow k[Z_\pi]$ induced by $\epsilon\colon Z_\pi\rightarrow Z^\circ$ and $p^*_0$ denote  the ring morphism $k[M_1]\rightarrow k[Z_\pi]$ induced by $p_0\colon Z_\pi\rightarrow 
                T_{N_1}$. 
                Then $\epsilon^*\otimes \pi^*\colon k[Z^\circ]\otimes k[M_1]\rightarrow k[Z_\pi]$ is isomorphic, and all horizontal morphisms in the following diagram are isomorphic:
                \begin{equation*}
                    \begin{tikzcd}
                        k[Z^\circ]\otimes_k k[M_1]\ar[r]&k[Z_\pi]\\
                        k[Z^\circ]\otimes_k k[M_1\cap \tau^{\vee}]\ar[r]\ar[u, hook]\ar[d]&\Gamma(W(\sigma), \OO_W)\ar[u, hook]\ar[d]\\
                        k[Z^\circ]\otimes_k k[M_1\cap \tau^\perp]\ar[r]&\Gamma(W_\sigma, \OO_{\overline{W_\sigma}})\\
                    \end{tikzcd}
                \end{equation*}
                where downward morphisms are quotient morphisms and upward morphisms are localization morphisms. 
                \item[(d)] Let $(W, S, u)$ be a mock polytope of $W$.  
                From \cite[Proposition 4.12(c)]{Y23}, there exists $\eta_i\in k[Z^\circ]^*$ and $\omega_i\in M_1$ such that $u(i) = \epsilon^*(\eta_i)p^*_0(\chi^{\omega_i})$ for any $i\in S$. 
                We assume that $\sigma\in (\Delta_1)_u$.
                Then there exists $\omega\in M_1$ such that $\omega_i-\omega\in\tau^\perp$ for any $i\in S^\sigma$ and $\val^\sigma_{p^*(\chi^\omega)} = \val^\sigma_u$. 
                \item[(e)] We use the notation in (d).  
                Let $u'\colon S^\sigma\rightarrow k[Z^\circ]\otimes_k k[O_\tau]$ be a map defined as $u'(i) = \eta_i\otimes\chi^{\omega_i-\omega}$ for $i\in S^\sigma$. 
                Then $u^\sigma\sim u'$ on the identification with $k[Z^\circ]\otimes k[O_\tau]$ and $k[W_\sigma]$ from (c). 
            \end{enumerate}
        \end{proposition}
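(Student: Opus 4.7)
The plan is to prove (a)--(e) in order, exploiting the compatibility of the section $s$ with the projection $\pi$ and the fact that $\sigma$ lies in $\ker(\pi_\RR)$. For (a), since $\pi_\RR(\sigma) = \{0\}$, the restriction $p_\RR|_{\ker(\pi_\RR)} : \ker(\pi_\RR) \to (N_1)_\RR$ is an isomorphism, so $\tau = p_\RR(\sigma)$ inherits strong convexity, rationality, and polyhedrality from $\sigma$. For (b), I would verify each Cartesian square separately: the bottom squares follow from the standard theory of toric morphisms (the torus orbit of $\sigma_\varphi$ is precisely the preimage of the orbit of $\tau$ under the appropriate projection of tori), while the upper squares follow from the defining construction of the mock toric variety $W$ from \cite[Definition 4.11]{Y23} as a fibered-product-type scheme over $X(\Delta_1)$, so that $W_\sigma$ sits as the preimage of $O_\sigma$.

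For (c), the isomorphism $\epsilon^* \otimes \pi^* : k[Z^\circ] \otimes_k k[M_1] \xrightarrow{\sim} k[Z_\pi]$ is essentially the identification of $Z_\pi$ with the product $Z^\circ \times_k T_{N_1}$ coming from \cite[Definition 4.11]{Y23}. The middle row then corresponds to the open subscheme $W(\sigma) \subset W$, which under the tensor decomposition is localization only in the $k[M_1]$ factor, giving $k[Z^\circ] \otimes k[M_1 \cap \tau^\vee]$. The bottom row corresponds to restriction to the orbit $O_\tau \subset X(\sigma)$, which in toric terms means quotienting $k[M_1 \cap \tau^\vee]$ by the ideal generated by $\chi^\mu$ with $\mu \notin \tau^\perp$; the resulting coordinate ring is $k[M_1 \cap \tau^\perp]$. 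Compatibility of tensor products with these operations on the second factor gives the claimed commuting square of isomorphisms.

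Part (d) is the technical core. Since $\sigma \in (\Delta_1)_u$, Proposition \ref{prop: VoPP2}(d) produces $\chi' \in k[W^\circ]^*$ with $\val^\sigma_u = \val^\sigma_{\chi'}$. Using the isomorphism of (c), write $\chi' = \epsilon^*(\eta) \cdot p_0^*(\chi^\omega)$ for $\eta \in k[Z^\circ]^*$ and $\omega \in M_1$. For any $i \in S^\sigma$ we have $\val^\sigma_{u(i)} = \val^\sigma_u = \val^\sigma_{\chi'}$, and since $\pi_\RR(\sigma) = \{0\}$ implies that valuations pulled back from $k[Z^\circ]$ vanish on $\sigma$, we get $\val^\sigma_{p_0^*(\chi^{\omega_i})} = \val^\sigma_{p_0^*(\chi^{\omega})}$, i.e.\ $\langle v, \omega_i - \omega \rangle = 0$ for all $v \in \sigma$. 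Since $p_\RR|_\sigma$ is injective onto $\tau$ and $\omega_i, \omega \in M_1$, this is exactly $\omega_i - \omega \in \tau^\perp$. For (e), take $\chi = p_0^*(\chi^\omega)$; then $\chi^{-1} u(i) = \epsilon^*(\eta_i) \cdot p_0^*(\chi^{\omega_i - \omega})$, and the quotient map $p^\sigma$ of (c) sends this to $\eta_i \otimes \chi^{\omega_i - \omega} \in k[Z^\circ] \otimes k[M_1 \cap \tau^\perp] = k[Z^\circ] \otimes k[O_\tau]$, which is precisely $u'(i)$. The difference between $u^\sigma_\chi$ and $u^\sigma_{\chi'}$ is multiplication by the unit $\eta^{-1} \otimes 1$, giving the required equivalence.

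The main obstacle will be part (d): one must rigorously justify the vanishing of $\val^\sigma_{\epsilon^*(\eta_i)}$ for units $\eta_i \in k[Z^\circ]^*$ using $\pi_\RR(\sigma) = \{0\}$ (invoking \cite[Proposition 4.12(c)]{Y23} and the monomial description of valuations from \cite[Proposition 6.17]{Y23}), and then ensure that the lattice condition $\omega_i - \omega \in \tau^\perp$ holds uniformly for all $i \in S^\sigma$ with the single chosen $\omega$. Once (d) is established, part (e) is essentially a bookkeeping exercise using the well-definedness of $[u^\sigma]$ from Proposition \ref{prop: well-def of polytope}.
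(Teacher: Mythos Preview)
Your proposal is correct and follows essentially the same approach as the paper's proof: you use injectivity of $p_\RR|_\sigma$ for (a), the product decomposition $N' \cong N \oplus N_1$ (and its quotient by $s(N_\varphi)$) to identify the Cartesian squares in (b) and deduce (c), and then in (d) you invoke Proposition~\ref{prop: VoPP2}(d) to produce $\chi'$, decompose it via the unit description of $k[Z_\pi]^*$, and use $\pi_\RR(\sigma)=\{0\}$ to kill the $k[Z^\circ]^*$ contribution to the valuation. Your handling of (e), choosing $\chi = p_0^*(\chi^\omega)$ directly so that $u^\sigma_\chi = u'$ on the nose, is slightly cleaner than the paper's, which keeps the original $\chi'$ and records the correction unit $\beta = p^\sigma(\chi'^{-1}p_0^*(\chi^\omega))$ explicitly.
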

        \begin{proof}
            We prove the statements from (a) to (e) in order. 
            \begin{enumerate}
                \item[(a)] From the assumption of $\sigma$, $p_\RR|_\sigma$ is injective. 
                Thus, $\tau$ is a strongly convex cone from \cite[Lemma 8.1(a)]{Y23} and (d). 
                Because $p$ is a lattice morphism, $\tau$ is a rational polyhedral convex cone. 
                \item[(b)] From the definition of the mock toric structure of $W$, two squares on the left side of the above diagram are Cartesian squares. 
                We can identify with $\sigma$ and $\{0\}\times\tau$ by the isomorphism $\pi\times p\colon N'\rightarrow N\oplus N_1$. 
                Thus, all squares of the following diagram are Cartesian squares: 
                \begin{equation*}
                    \begin{tikzcd}
                        O_\sigma\ar[r]\ar[d]&O_{\tau}\ar[d]\\
                        X(\sigma)\ar[r, "p_*"]\ar[d, "\pi_*"]&X(\tau)\ar[d]\\
                        T_N\ar[r]&\Spec(k)
                    \end{tikzcd}
                \end{equation*}
                Moreover, We can identify with $\sigma_\varphi$ and $\{0\}\times\tau$ by the isomorphism $\pi_\varphi\times p_\varphi\colon N'/s(N_\varphi)\rightarrow N/N_\varphi\oplus N_1$. 
                Similarly, all squares of the following diagram are Cartesian squares: 
                \begin{equation*}
                    \begin{tikzcd}
                        O_{\sigma_\varphi}\ar[r]\ar[d]&O_{\tau}\ar[d]\\
                        X(\sigma_\varphi)\ar[r, "{p_\varphi}_*"]\ar[d, "{\pi_\varphi}_*"]&X(\tau)\ar[d]\\
                        T_{N/N_\varphi}\ar[r]&\Spec(k)
                    \end{tikzcd}
                \end{equation*}
                \item[(c)] The statement follows from (b). 
                \item[(d)] Because $\sigma\in (\Delta_1)_u$, there exists $\chi'\in k[Z_\pi]^*$ such that $\val^\sigma_{\chi'} = \val^\sigma_u$ from Proposition \ref{prop: VoPP2}(d). 
                From \cite[Proposition 4.12(c)]{Y23}, there exists $\eta\in k[Z^\circ]^*$ and $\omega\in M_1$ such that $\chi' = \epsilon^*(\eta)p^*_0(\chi^{\omega})$. 
                From the assumption of $\sigma$ and \cite[Proposition 4.12(b)]{Y23} and (d), $\val^\sigma_{\chi'} = \val^\sigma_{p^*_0(\chi^\omega)}$ and $\val^\tau_{\chi^\omega} = \val^\tau_{\chi^{\omega_i}}$.  
                Thus, $\val^\sigma_{p^*_0(\chi^\omega)} = \val^\sigma_u$ and $\omega_i-\omega\in \tau^\perp$ for any $i\in S^\sigma$. 
                \item[(e)] From (d) and \cite[Proposition 4.8]{Y23}, $\chi'^{-1}p^*_0(\chi^\omega) \in\Gamma(W(\sigma), \OO_W)^*$. 
                Let $p^\sigma$ denote a ring morphism $\Gamma(W(\sigma), \OO_W)\rightarrow\Gamma(W_\sigma, \OO_{\overline{W_\sigma}})$ associated with the closed immersion $W_\sigma\hookrightarrow W(\sigma)$. 
                Let $\beta$ denote $p^\sigma(\chi'^{-1}p^*_0(\chi^\omega))$. 
                Then $\beta\in\Gamma(W_\sigma, \OO_{\overline{W_\sigma}})^*$ and $u^\sigma(i) = \beta u'(i)$ for any $i\in S^\sigma$. 
                Therefore, $u^\sigma\sim u'$. 
            \end{enumerate}
        \end{proof}
        The following proposition shows that the stable birational volume of a general section in the linear system defined by the mock polytope can be computed, achieving the aim of this section. 
        \begin{proposition}\label{prop:Bertini computation}
            We use the notation in Definition \ref{def: decomposition of mock polytope}, that in Definition \ref{def: model polytope}, and that in Proposition \ref{prop: extend refinement along function}. 
            Let $(Z_\pi, S, u)$ be a mock polytope of $Z_\pi$ and $\kappa\colon S\rightarrow \ZZ$ be a map. 
            We assume that $Z$ is proper over $k$, $d_u\geq 2$, and  $(\Delta\times\Delta_!)_{u\oplus\kappa, \pi^1}$ is a strongly convex rational polyhedral fan. 
            Let $\Delta''$ denote $(\Delta\times\Delta_!)_{u\oplus\kappa, \pi^1}$. 
            We remark that $\Delta'' = \Delta''_{u\oplus\kappa}$ from Proposition \ref{prop: extend refinement along function}(h) and $\supp(\Delta'') = (\pi^1_\RR)^{-1}(\supp(\Delta\times\Delta_!))$ from Proposition \ref{prop: extend refinement along function}(f). 
            Moreover, we assume that there exists a refinement $\Delta'$ of $\Delta''$ such that $\Delta'$ is generically unimodular, specifically reduced, and compactly arranged. 
            Let $W'$ be a mock toric variety induced by $Z^1$ along $\pi^1_*\colon X(\Delta'')\rightarrow X(\Delta\times\Delta_!)$ and $s^1$. 
            Let $(a_i)_{i\in S}\in k^{|S|}$ be a family and $f$ denote the following polynomial in $k[Z^1_{\pi^1}]$: 
            \[
                f = \sum_{i\in S}a_iu(i)t^{\kappa(i)}
            \]
            Let $\sigma\in\Delta''$ be a cone and $f^\sigma$ denote the following polynomial in $k[W'_\sigma]$: 
            \[
                f^\sigma = \sum_{i\in S^\sigma}a_i(u\oplus\kappa)^\sigma(i)
            \]
            Let $\{E^{(j)}_\sigma\}_{1\leq j\leq r_\sigma}$ be all irreducible components of $H_{W'_\sigma, f^\sigma}$. 
            We remark that $r_\sigma$ is dependent on the choice of $f$. 
            Let $\{F_i\}_{1\leq i\leq r}$ be all irreducible components of $H^\circ_{W', f}\times_{{\Gm^1}_{,k}}\Spec(\Kt)$. 
            Then for general $(a_i)_{i\in S}\in k^{|S|}$, the following equation holds: 
            \[
                \sum_{1\leq i\leq r}\VOL\biggl(\bigl\{F_i\bigr\}_{\mathrm{sb}}\biggr) = \sum_{\substack{\sigma\in\Delta''_{\bdd}\\ 
                \sigma\in{\Delta''^{\geq 2}_{u\oplus\kappa}}}}(-1)^{\dim(\sigma)-1}\biggl(\sum_{1\leq j\leq r_\sigma} \bigl\{E^{(j)}_{\sigma}\bigr\}_{\mathrm{sb}}\biggr)
            \]
        \end{proposition}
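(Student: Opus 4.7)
The plan is to verify the five hypotheses of Theorem \ref{thm: general computation} for the polynomial $f$, the fan $\Delta''$, and its refinement $\Delta'$, and then to translate the resulting identity into the notation of the statement. Hypotheses (3) and (5) are given by assumption. For (1), I would apply Proposition \ref{prop: Bertini for fine} to the abstract mock polytope $(W', S, [u\oplus\kappa])$: since Proposition \ref{prop: extend refinement along function}(h) yields $\Delta''_{u\oplus\kappa} = \Delta''$, we obtain $\val_f = \val_{u\oplus\kappa}$ and $\Delta_f = \Delta''$ for general $(a_i)\in k^{|S|}$, and then \cite[Corollary 6.9]{Y23} gives fineness of $f$ for $\Delta''$. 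For (4), Proposition \ref{prop: polytope and dominant} together with the hypothesis $d_u\geq 2$ guarantees that every irreducible component of $H^\circ_{W', f}$ dominates $\Gm^1_{,k}$ for general $(a_i)$.

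The remaining hypothesis (2) --- nondegeneracy for the refinement $\Delta'$ --- is the main technical point. Proposition \ref{prop:non-empty and Bertini}(f) applied to $(W', S, [u\oplus\kappa])$ only delivers nondegeneracy for $\Delta''$, not automatically for the finer fan $\Delta'$. To bridge this gap I would invoke the observation (implicit in the proof of Proposition \ref{prop: neo irreducible computation}(f), via \cite[Proposition 6.14(e)]{Y23}) that for each $\tau\in\Delta'$ lying over $\sigma\in\Delta''$, the stratum intersection $H_{W,f}\cap W_\tau$ is a trivial algebraic torus fibration over $H_{W',f}\cap W'_\sigma$, so smoothness of the latter forces smoothness of the former. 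Propagating smoothness across this refinement is the step that I expect needs the most care to write out rigorously.

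Assuming the hypotheses are verified, Theorem \ref{thm: general computation} produces the desired formula with right-hand side indexed by $\sigma\in\Delta''_{\bdd}\cap{}_f\Delta''$. Two identifications then complete the proof. On the left, $H^\circ_{W',f}$ is smooth by Proposition \ref{prop:non-empty and Bertini}(e), so its connected components coincide with its irreducible components after base change to $\Kt$, matching the $\{F_i\}_{1\leq i\leq r}$ of the statement. On the right, the argument in the proof of Proposition \ref{prop:non-empty and Bertini}(d) shows ${}_f\Delta'' = \Delta''^{\geq 2}_{u\oplus\kappa}$ for general $(a_i)$, and Lemma \ref{lemma: polytope and stratification} identifies $H_{W',f}\cap W'_\sigma$ with $H^\circ_{W'_\sigma, f^\sigma}$, whose connected (equivalently, irreducible) components are exactly the $\{E^{(j)}_\sigma\}_{1\leq j\leq r_\sigma}$. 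Finiteness of $\Delta''$ makes the intersection of all these generic-position conditions a dense open subset of $k^{|S|}$, so the phrase ``for general $(a_i)$'' is well-posed.
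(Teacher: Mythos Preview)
Your proposal is correct and follows essentially the same approach as the paper: verify the hypotheses of Theorem~\ref{thm: general computation} via Propositions~\ref{prop: Bertini for fine}, \ref{prop:non-empty and Bertini}(f), and \ref{prop: polytope and dominant}, then identify ${}_f\Delta'' = \Delta''^{\geq 2}_{u\oplus\kappa}$ from the proof of Proposition~\ref{prop:non-empty and Bertini}(d) and the strata via Lemma~\ref{lemma: polytope and stratification}. Your caution about propagating nondegeneracy from $\Delta''$ to the refinement $\Delta'$ is well-placed but unnecessary: the paper simply cites \cite[Proposition 6.14(e)]{Y23} directly for this step, and the torus-fibration mechanism you describe is exactly what that proposition encodes.
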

        \begin{proof}
            We show that for general $(a_i)_{i\in S}\in k^{|S|}$, $f$ satisfies the conditions in Theorem \ref{thm: general computation}. 
            
            First, from Proposition \ref{prop: Bertini for fine}, for general $(a_i)_{i\in S}\in k^{|S|}$, we have $\val_{u\oplus\kappa} = \val_{f}$ and $\Delta''_{u\oplus\kappa} = \Delta''_f$. 
            In particular, we remark that $\Delta'' = \Delta''_f$. 

            Second, from Proposition \ref{prop:non-empty and Bertini}(f), for general $(a_i)_{i\in S}\in k^{|S|}$, $f$ is non-degenerate for $\Delta'' = \Delta''_{u\oplus\kappa}$. 
            In particular, from \cite[Proposition 6.14(e)]{Y23}, $f$ is non-degenerate for $\Delta'$ too. 

            Third, from Proposition \ref{prop: polytope and dominant}, for general $(a_i)_{i\in S}\in k^{|S|}$, every irreducible component of $H^\circ_{Z^1_{\pi^1}, f}$ dominates ${\Gm^1}_{,k}$.

            Thus, all conditions in Theorem \ref{thm: general computation} hold. 

            Moreover, We show that for general $(a_i)_{i\in S}\in k^{|S|}$, ${}_f\Delta'' = \Delta''^{\geq 2}_{u\oplus\kappa}$. 
            Indeed, we have already shown that for general $(a_i)_{i\in S}\in k^{|S|}$, $\Delta''_{u\oplus\kappa} = {}_f\Delta''$ in the proof of Proposition \ref{prop:non-empty and Bertini}(d). 

            We recall that $H_{W', f}\cap W'_\sigma = H^\circ_{W'_\sigma, f^\sigma}$ for any $\sigma\in\Delta''$ from \ref{lemma: polytope and stratification}. 
            Because $f$ is non-degenerate for $\Delta''$, any irreducible component of  $H_{W', f}\cap W'_\sigma$ is a connected component of $H_{W', f}\cap W'_\sigma$. 
            Therefore, the statement holds from Theorem \ref{thm: general computation}. 
        \end{proof}
        Now, while the formula has been obtained, grasping the stable rationality of each stratum in the strictly toroidal model is difficult. 
        However, when favorable conditions are established for the shape of the cone, rationality can be inferred. 
        We show it in Proposition\ref{prop:simple-slice-rational}. 
        This proposition is a generalization of \cite[Example 3.8]{NO22}. 
        The following lemma is needed in the proof of Proposition\ref{prop:simple-slice-rational}. 
        \begin{proposition}\label{prop:common-divisor}
            Let $Z$ be a mock toric variety and $(Z, S, [u])$ be an abstract mock polytope of $Z$. 
            Let $S_1$ and $S_2$ be a non-empty subset of $S$ such that $S_1\coprod S_2 = S$. 
            For $(a_i)_{i\in S}\in k^{|S|}$, $f_1$ and $f_2$ denote the following functions in $k[Z^\circ]$: 
            \begin{align*}
                f_1 &= \sum_{i\in S_1}a_iu(i)\\
                f_2 &= \sum_{i\in S_2}a_iu(i)\\
            \end{align*}
            Then for general $(a_i)_{i\in S}\in k^{|S|}$, $f_1$ and $f_2$ are co-prime. 
        \end{proposition}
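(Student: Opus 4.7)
The strategy is a direct dimension count on the incidence variety of simultaneous zeros of $f_1$ and $f_2$, whose only special input is that every $u(i)$ is a unit on $Z^\circ$. I form
\[
    W \;=\; \bigl\{\bigl((a_i)_{i\in S},\, x\bigr)\in \A^{|S|}_k\times Z^\circ \;\bigm|\; f_1(x) = f_2(x) = 0\bigr\}
\]
as a closed subscheme of $\A^{|S|}_k\times Z^\circ$ and study its two projections.

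For the projection $W\to Z^\circ$, fix $x\in Z^\circ$: since $u(i)\in k[Z^\circ]^*$ we have $u(i)(x)\in k^*$ for every $i$, so each equation $\sum_{i\in S_j} a_i\,u(i)(x) = 0$ is a nonzero linear form in the coordinates $(a_i)_{i\in S_j}$ and cuts out a hyperplane. Because $S_1\coprod S_2 = S$, these two linear forms involve disjoint blocks of variables and are therefore automatically linearly independent, so the fiber over $x$ is an irreducible linear subspace of $\A^{|S|}_k$ of codimension exactly $2$. Hence $W$ is a rank-$(|S|-2)$ linear subbundle of the trivial bundle $\A^{|S|}_k\times Z^\circ$ over $Z^\circ$; since $Z^\circ$ is irreducible (the dense open stratum of a mock toric variety), $W$ itself is irreducible of dimension $\dim Z^\circ + |S| - 2$.

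Next, the projection $W\to\A^{|S|}_k$ has fiber $H^\circ_{Z, f_1}\cap H^\circ_{Z, f_2}$ over $(a_i)$. By the theorem on dimension of fibers, if this projection is dominant then its generic fiber has dimension at most $\dim W - |S| = \dim Z^\circ - 2$; if it is not dominant the generic fiber is empty, which is even stronger. Upper semicontinuity of fiber dimension then produces a nonempty Zariski open $U\subset \A^{|S|}_k$ on which $\dim\bigl(H^\circ_{Z, f_1}\cap H^\circ_{Z, f_2}\bigr) \leq \dim Z^\circ - 2$, so the two hypersurfaces share no codimension-$1$ component and $f_1, f_2$ are coprime for $(a_i)\in U$.

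The point requiring real care is the irreducibility of $Z^\circ$ (and consequently of $W$), which I would extract from the mock toric framework of \cite{Y23}; without it the dimension count has to be applied component by component. The degenerate case $d_{u|_{S_j}} = 1$, in which a general $f_j$ is a unit times a nonzero scalar and coprimality is automatic, is absorbed uniformly into the same incidence-variety argument without modification.
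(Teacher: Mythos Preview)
Your argument is correct and uses the same incidence variety as the paper (there called $X$), but you reach the dimension bound by a cleaner route. The paper first proves an auxiliary statement: for any fixed nonzero $f_1$, the set of $(c_i)_{i\in S_2}$ for which $f_2$ lands in a given principal ideal $(g)$ is a proper linear subspace (because each $u(i)$ is a unit, hence not in $(g)$); it then invokes generic flatness on $X\to\A^{|S|}_k$ and derives a contradiction from the hypothesis that the generic fiber has dimension $\dim Z-1$. You bypass this detour entirely by reading off $\dim W$ from the \emph{other} projection $W\to Z^\circ$, whose fibers are visibly linear of codimension $2$ since the two linear forms involve disjoint blocks of coordinates; the bound on the generic fiber over $\A^{|S|}_k$ then falls out of the standard fiber-dimension theorem. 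Your approach is shorter and makes the role of the hypothesis $S_1\coprod S_2=S$ more transparent; the paper's preliminary step, while correct, is not actually needed once one computes $\dim W$ directly. Your caveat about the irreducibility of $Z^\circ$ is well placed: in the mock toric setting of \cite{Y23} the open stratum $Z^\circ$ is indeed integral (this is implicit throughout the paper, e.g.\ in the use of $k[Z^\circ]^*/k^*$ as a finitely generated free abelian group), so no component-by-component argument is required.
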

        \begin{proof}
            Let $(b_i)_{i\in S_1}\in k^{|S_1|}$ be a vector such that $\sum_{i\in S_1}b_iu(i)\neq 0$ and $\theta$ denote the following linear morphism from $k^{|S_2|}$ to $k[Z^\circ]$:
            \[
                k^{|S_2|} \ni (c_i)_{i\in S_2} \mapsto\sum_{i\in S_2}c_iu(i)\in k[Z^\circ]
            \]
            Let $g$ be an irreducible component of $\sum_{i\in S_1}b_iu(i)$. 
            Then the following subset of $k^{|S_2|}$ is a linear subspace of $k^{|S_2|}$:
            \[
                \{(c_i)_{i\in S_2}\in k^{|S_2|} \mid \sum_{i\in S_2}c_iu(i)\in (g)\}
            \]
            From the definition of mock polytope, $(1, 0, \ldots, 0)_{i\in S_2}$ is not contained in the above linear subspace.
            Hence, the above linear subspace is a strict linear subspace of $k^{|S_2|}$. 
            This shows that for any $(b_i)_{i\in S_1}$ such that $\sum_{i\in S_1}b_iu(i)\neq 0$, $\sum_{i\in S_1}b_iu(i)$ and $\sum_{i\in S_2}c_iu(i)$ are co-prime for general $(c_i)_{i\in S_2}\in k^{|S_2|}$. 

            Let $X$ be a closed subscheme of $\A^{|S|}_k\times Z^\circ$ defined by the ideal which is generated by $f_1$ and $f_2\in k[\{a_i\}_{i\in S}]$, where $\{a_i\}_{i\in S}$ is a family of coordinate functions of $\A^{|S|}_k$. 
            Let $\pi$ denote the first projection $X\rightarrow \A^{|S|}_k$. 
            Then from generic flatness, there exists non-empty open subset $V$ of $\A^{|S|}_k$ such that $\pi|_{\pi^{-1}(V)}\colon \pi^{-1}(V)\rightarrow V$ is flat. 
            Thus, there exists $n$ and anon-empty open subset $V_0$ of $V$ such that $\{x\in V\mid \dim_{\kappa(x)}(\pi^{-1}(x)) = n\} = V_0$. 
            We claim that $n\leq\dim(Z)-2$ by a contradiction. 
            Because, $S_1$ and $S_2$ are non-empty, we may assume $n = \dim(Z)-1$. 
            Let $(b_i)_{i\in S_1}\in k^{|S_1|}$ be a vector such that $((b_i)_{i\in S_1}\times\A^{|S_2|})\cap V_0\neq \emptyset$ and $\sum_{i\in S_1}b_iu(i)\neq 0$. 
            Then for any $((b_i)_{i\in S_1}, (c_i)_{i\in S_2})\in V_0$, $\sum_{i\in S_1}b_iu(i)$ and $\sum_{i\in S_2}c_iu(i)$ are not co-prime from the assumption of $n$. 
            However, it is a contradiction of the argument at first. 
            Thus, $n\leq\dim(Z)-2$. 
            This inequality shows that the statement holds. 
        \end{proof}
        \begin{proposition}\label{prop:simple-slice-rational}
            We use the notation in Proposition \ref{prop: computation of reduction step}and in the following notation:
            \begin{itemize}
                \item There exists an isomorphisms $\epsilon\times p_0\colon Z^\pi\rightarrow Z\times T_{N_1}$ from Proposition \ref{prop: computation of reduction step}(c). 
                Then $\epsilon^*\otimes p^*_0\colon k[Z^\circ]\otimes_k k[M_1]\rightarrow k[Z_\pi]$ is an isomorphism. 
                We regard $k[Z^\circ]$ and $k[M_1]$ as subrings of $k[Z_\pi]$ by $\epsilon^*$ and $p^*_0$, so we omit $\epsilon^*$ and $p^*_0$.
                \item Let $\omega$ denote an element of $M_1$ such that $M_1/\ZZ\omega$ is torsion-free. 
                We fix a sub lattice $M_2$ of $M_1$ such that $M_2\oplus \ZZ\omega = M_1$. 
                \item Let $(Z_\pi, S, [u])$ be an abstract mock polytope of $Z_\pi$.  
                We assume that there exists the non-empty subsets $S_1$ and 
                $S_2$ of $S$ which satisfy the following three conditions:
                \begin{itemize}
                    \item[(1)] $S = S_1\coprod S_2$ 
                    \item[(2)] For any $i\in S_1$, there exists $v(i)\in k[Z^\circ]^*$ and $w(i)\in k[M_2]^*$ such that $u(i) = \chi^{\omega} v(i)w(i)$. 
                    \item[(3)] For any $i\in S_2$, there exists $v(i)\in k[Z^\circ]^*$ and $w(i)\in k[M_2]^*$ such that $u(i) = v(i)w(i)$. 
                \end{itemize}
                \item For $(a_i)_{i\in S}\in k^{|S|}$, let $f$ denote the following function in $k[Z_\pi]$:
                \[
                    f = \sum_{i\in S}a_iu(i)
                \]
            \end{itemize}
            Then for general $(a_i)_{i\in S}\in k^{|S|}$, $H_{Z_\pi, f}$ is rational. 
        \end{proposition}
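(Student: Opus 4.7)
The plan is to exhibit an explicit birational map from $H_{Z_\pi, f}$ to a dense open subset of a torus, by viewing $f$ as a degree-one polynomial in the monomial $\chi^\omega$. Using conditions (2) and (3), split $f$ according to $S = S_1 \sqcup S_2$ and write
\[
    f = \chi^\omega g_1 + g_2, \qquad g_1 = \sum_{i\in S_1} a_i v(i) w(i), \qquad g_2 = \sum_{i\in S_2} a_i v(i) w(i),
\]
so that $g_1, g_2 \in k[Z^\circ] \otimes_k k[M_2]$, and $k[Z_\pi]$ is identified with the Laurent extension $(k[Z^\circ]\otimes_k k[M_2])[\chi^{\pm\omega}]$.

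Next, I would apply Proposition~\ref{prop:common-divisor} to the mock polytope $(Z_\pi, S, [u])$ with the decomposition $S = S_1 \sqcup S_2$, obtaining coprimality of $\sum_{i\in S_1} a_i u(i) = \chi^\omega g_1$ and $\sum_{i\in S_2} a_i u(i) = g_2$ in $k[Z_\pi]$ for general $(a_i)_{i\in S}$. Because $\chi^\omega$ is a unit in $k[Z_\pi]$, this is equivalent to coprimality of $g_1$ and $g_2$, and by the Laurent-polynomial structure of the extension this in turn is equivalent to coprimality of $g_1$ and $g_2$ inside the subring $k[Z^\circ]\otimes_k k[M_2]$.

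Now consider the projection $q\colon Z_\pi \cong Z\times T_{M_2}\times \Gm \to Z\times T_{M_2}$ coming from the direct sum $M_1 = M_2 \oplus \ZZ\omega$. Over the open subset $\{g_1 \neq 0\}\subset Z^\circ \times T_{M_2}$, the defining equation $\chi^\omega g_1 + g_2 = 0$ uniquely determines $\chi^\omega = -g_2/g_1$, producing a regular section of $q|_{H_{Z_\pi,f}}$ defined on this open set. Coprimality of $g_1, g_2$ ensures that $\{g_1 = g_2 = 0\}$ has codimension at least $2$ inside $Z^\circ \times T_{M_2}$, so no irreducible component of the hypersurface $H_{Z_\pi,f}$ can lie entirely in $\{g_1 = 0\}$; in particular the open subset $H_{Z_\pi,f}\cap \{g_1 \neq 0\}$ is dense in $H_{Z_\pi,f}$. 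Hence $q$ restricts to an isomorphism of this dense open subset with the open subset $\{g_1 \neq 0\}\subset Z^\circ \times T_{M_2}$, which is a dense open subscheme of a product of tori and is therefore rational.

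The main anticipated obstacle is the bookkeeping between the three rings $k[Z^\circ]\otimes_k k[M_2]\subset k[Z_\pi]$ and their units: one has to be sure that the coprimality statement delivered by Proposition~\ref{prop:common-divisor} (which is phrased in the big ring $k[Z_\pi]$) really descends to coprimality in the small ring used to form $g_1, g_2$, since units of the big ring such as $\chi^\omega$ and its powers could in principle absorb a common factor invisible in the small ring. Once this passage is made rigorous via the Laurent-polynomial presentation, the remaining birational argument is the standard ``solving the linear equation in one variable'' trick and causes no further difficulty.
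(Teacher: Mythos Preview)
Your proposal is correct and follows essentially the same route as the paper: write $f=\chi^{\omega}F_1+F_2$ with $F_1,F_2\in k[Z^\circ]\otimes_k k[M_2]$, apply Proposition~\ref{prop:common-divisor} for coprimality, and conclude that $H_{Z_\pi,f}$ is birational to the open locus $\{F_1\neq 0\}$ in $\Spec(k[Z^\circ]\otimes_k k[M_2])$, which is rational. Two minor slips that do not affect the argument: the image of your projection $q$ restricted to $H_{Z_\pi,f}\cap\{g_1\neq 0\}$ is $\{g_1g_2\neq 0\}$ rather than $\{g_1\neq 0\}$ (since $\chi^\omega\in\Gm$ forces $g_2\neq 0$), and $Z^\circ$ is rational but not in general a torus.
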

        \begin{proof}
            Let $F_1$ denote the following function in $k[Z_\pi]$: 
            \[
                F_1= \sum_{i\in S_1}a_iv(i)w(i)
            \]
            Let $F_2$ denote the following function in $k[Z_\pi]$: 
            \[
                F_2= \sum_{i\in S_2}a_iv(i)w(i)
            \]
            We remark that $f = \chi^\omega F_1 + F_2$. 
            Hence, from Proposition \ref{prop:common-divisor}, $F_1$ and $F_2$ are co-prime for general $(a_i)_{i\in S}\in k^{|S|}$. 
            Thus, $H_{Z_\pi, f}$ is irreducible for general $(a_i)_{i\in S}\in k^{|S|}$ because $k[Z^\circ][M_2]$ is a UFD. 
            For such $(a_i)_{i\in S}\in k^{|S|}$, let $U$ denote an open subscheme $\Spec(k[Z^\circ][M_2]_{F_1})$ of $\Spec(k[Z_0][M_2])$. 
            Then $U$ and $H_{Z_\pi, f}$ are birational. 
            In particular, $\Spec(k[Z^\circ][M_2])$ is rational, so that $H_{Z_\pi, f}$ is rational. 
        \end{proof}
\section{Application: Stable rationality of hypersurfaces in Grassmaniann varieties}
		As an application of the considerations thus far, we can mention the stable rationality of some hypersurfaces in $\Gr_\C(2, n)$. 
		In this section, we use the following notation. 
    \begin{itemize}
        \item Let $n$ be a positive integer greater than 3. 
        Let $k$ be an uncountable algebraically closed field with $\charac(k) = 0$. 
        \item Let $I$ denote the following set:
        $$I = \{(i, j)\in\ZZ^2\mid 0\leq i\leq j\leq n-3\}$$
        \item Let $\{e^{i,j}\}_{(i, j)\in I}$ denote a canonical basis of $\ZZ^{|I|}$, and $\mathbf{1}\in \ZZ^{|I|}$ be $\sum_{(i, j)\in I}e^{i,j}$. 
        \item Let $N$ denote $\ZZ^{|I|}/\ZZ\mathbf{1}$, $\Pi$ denote the quotient morphism $\ZZ^{|I|}\rightarrow N$, and $e_{i,j}\in N$ denote $\Pi(e^{i,j})$ for $(i, j)\in I$. 
        \item Let $(\ZZ^{|I|})^\vee$ denote a dual lattice of $\ZZ^{|I|}$, and $\{\omega_{i, j}\}_{(i, j)\in I}$ denote a dual basis of $\{e^{i,j}\}_{(i, j)\in I}$. 
        \item Let $M$ denote a sub lattice of $(\ZZ^{|I|})^\vee$ as follows: 
        $$M = \{\sum_{(i, j)\in I}a_{i, j}\omega_{i, j}\in(\ZZ^{|I|})^\vee\mid \sum_{(i, j)\in I} a_{i, j} = 0\}$$
        We remark that we can regard $M$ as a dual lattice of $N$. 
        \item Let $N^\dagger$ be a lattice of rank $n - 1$, $\{e^\dagger_j\}_{-1\leq j\leq n-3}$ be a basis of $N^\dagger$. 
        Let $M^\dagger$ be a dual lattice of $N^\dagger$, $\{\eta_j\}_{-1\leq j\leq n-3}$ be a dual basis of $\{e^\dagger_j\}_{-1\leq j\leq n-3}$. 
        \item Let $\{Y_{i}\}_{0\leq i\leq n-3}$ be a homogeneous coordinate function of $\PP^{n-3}_k$. 
        Let $L_{i, j}\in\Gamma(\PP^{n-3}_k, \OO_{\PP^{n-3}_k}(1))$ be homogeneous functions as follows for $(i, j)\in I$: 
        \[
            L_{i,j} = \left\{
                \begin{array}{ll}
                    Y_i & (i = j)\\
                    Y_i - Y_j & (i < j)
                \end{array}
            \right.
        \]
        \item Let $\iota_0$ denote a closed immersion from $\PP^{n-3}_k$ to $\PP^{|I| - 1}_k$ defined by $\{L_{i, j}\}_{(i, j)\in I}$. 
        \item Let $\{X_{i,j}\}_{(i, j)\in I}$ be a homogeneous coordinate function of $\PP^{|I| - 1}_k$.
        \item Let $\mathcal{B}$ denote $\{L_{i, j}\}_{(i, j)\in I}$. 
        We remark that $\mathcal{B}$ generates $\Gamma(\PP^{n-3}_k, \OO(1))$. 
        Let $\mathcal{V}$, $\mathcal{C}$, $\Delta(\mathcal{B})$, $Z$, $(N, \Delta(\mathcal{B}), \iota, \Phi, \{N_\varphi\}_{\phi\in\Phi}, \{\Delta(\mathcal{B})_\varphi\}_{\phi\in\Phi})$ be data defined as those in \cite[Section 5]{Y23} from the above notation. 
        \item Let $H\in k[y_1, \ldots, y_{n-3}]$ be a polynomial defined as follows: 
        $$H = \prod_{1\leq j\leq n-3}y_j \prod_{1\leq j\leq n-3}(1- y_j)\prod_{1\leq i<j\leq n-3}(y_j - y_i)$$
        \item Let $o$ denote a ringmorphism from $k[y_1, \ldots, y_{n-3}]_H$ to $k[Z^\circ]$ defined as $o(y_j) = \frac{Y_j}{Y_0}$ for any $0\leq j\leq n-3$. 
        We can check that $o$ is isomorphic. 
        From now, we regard $k[Z^\circ]$ as $k[y_1, \ldots, y_{n-3}]_H$. 
        We remark that for any $(i, j)\in I$, the following equation holds:
        \[
            i^*(\frac{X_{i, j}}{X_{0. 0}}) = \left\{
                \begin{array}{ll}
                    y_i & (i = j)\\
                    y_i - y_j & (i < j)
                \end{array}
            \right.
        \]
        where $y_0 = 1$. 
        \item Let $e_t$ be a generator of $\ZZ$ and $\delta$ be a dual basis of $\ZZ^\vee$. 
        \item Let $\frac{X_{i, j}}{X_{l, m}}\in k[M]$ denote a torus invariant mononomial associated with $\omega_{i, j} - \omega_{l, m}\in M$, $x_j\in k[M^\dagger]$ denote a torus invariant mononomial associated with $\eta_j\in M^\dagger$, and $t\in k[\ZZ^\vee]$ is a torus invariant mononomial associated with $\delta\in\ZZ^\vee$. 
        \item Let $N'$ denote $N\oplus N^\dagger$ and $\pi$ denote a first projection $N'\rightarrow N$. 
        Let $s$ denote the section of $\pi$ such that $s(v) = (v, 0)$ for any $v\in N$.
        \item Let $J$ denote $\{(i, j)\in\ZZ^2\mid 0\leq i<j\leq n-1\}$. 
        
        \item Let $M_k(n, 2)$ denote a set of all $n\times 2$-matrices over $k$. 
        A set $M_k(n, 2)$ has a natural $\mathrm{GL}_k(2)$-action, and we can identify with $M_k(n, 2)/\mathrm{GL}_k(2)$ and $\Gr_k(2, n)(k)$. 
        \item For $A = (a_{i, j})_{0\leq i\leq n-1, 0\leq j\leq 1}\in M_k(n, 2)$, and integers $(i_1, i_2)\in J$, let $d_{i_1, i_2}(A)$ denote $a_{i_1, 0}a_{i_2, 1} - a_{i_1, 1}a_{i_2, 0}$. 
        \item Let $\{W_{i_1, i_2}\}_{(i_1, i_2)\in J}$ be a homogeneous coordinate function of $\PP^{|J|-1}_k$. 
        \item Let $\mathrm{Pl}$ denote the following Pl\"{u}cker embedding:
            \begin{align*}
                &\mathrm{Pl}\colon \Gr_k(2, n)\ni [A = (a_{i, j})_{0\leq i\leq n-1, 0\leq j\leq 1}]\mapsto[d_{i, j}(A)]\in \PP^{|J| - 1}_k
            \end{align*}
        \item Let $U^{i, j}$ denote the affine open subset of $\PP^{|J|-1}_k$ defined by $W_{i, j}\neq 0$ for $(i, j)\in J$, $U$ denote open subset $\Gr_k(2, n)\cap U^{0, 1}$ of $\Gr_k(2, n)$, and $\Gr^\circ_k(2, n)$ denote  open subset $\Gr_k(2, n)\cap \bigcap_{(i, j)\in J}U^{i, j}$ of $\Gr_k(2, n)$. 
        We remark that we can identify with $U$ and $\A^{2(n-2)}_k$ as follows: 
        \begin{multline*}
            M_k(n, 2)/\mathrm{GL}_k(2)\supset U(k) \ni
            \biggl[\begin{pmatrix}
                1 & 0 \\
                0 & 1 \\
                u_0 & v_0 \\
                u_1 & v_1 \\
                \vdots & \vdots\\
                u_{n-3} & v_{n-3} \\
            \end{pmatrix}
            \biggr]\\\longleftrightarrow
            (u_0, v_0, u_1, v_1, \ldots, u_{n-3}, v_{n-3})\in \A^{2(n-2)}_k(k)
        \end{multline*}  
        Let $\xi$ denote the closed immersion $\A^{2(n-2)}_k\hookrightarrow U^{0, 1}$ defined by the above identification with $U$ and $\A^{2(n-2)}_k$. 
        We regard $\{u_0, v_0, \ldots, u_{n-3}, v_{n-3}\}$ as coordinate functions of $\A^{2(n-2)}_k$. 
        Let $\xi^*$ denote a subjective ring morphism from $k[\{\frac{W_{i, j}}{W_{0, 1}}\}_{(i, j)\in J}]$ to $k[u_0, v_0, \ldots, u_{n-3}, v_{n-3}]$ induced from $\xi$. 
        \item We define $\{f_{i, j}\}_{(i, j)\in J}\subset k[u_0, v_0, \ldots, u_{n-3}, v_{n-3}]$ as follows: 
        \[
            f_{i,j} = \left\{
                \begin{array}{ll}
                    1 & ((i,j) = (0, 1))\\
                    v_{j - 2} & (i = 0, j > 1)\\
                    -u_{j - 2} & (i = 1)\\
                    u_{i-2}v_{j-2} - u_{j-2}v_{i-2} & (i > 1)
                \end{array}
            \right.
        \]
        We remark that $\xi^*(\frac{W_{i, j}}{W_{0, 1}}) = f_{i, j}$ for any $(i, j)\in J$ from the definition of $\mathrm{Pl}$. 
        \item Let $F\in k[u_0, v_0, \ldots, u_{n-3}, v_{n-3}]$ be a polynomial defined as $F = \prod_{(i, j)\in J}f_{i, j}$.
        We remark that the inclusion morphism $k[u_0, v_0, \ldots, u_{n-3}, v_{n-3}]\rightarrow k[u_0, v_0, \ldots, u_{n-3}, v_{n-3}]_F$ induces an open immersion $\Gr^\circ_k(2, n)\hookrightarrow U$. 
        \item Let $\zeta$ denote a ring morphism from $k[M^\dagger]\otimes_k k[Z^\circ]\rightarrow k[u_0, v_0, \ldots, u_{n-3}, v_{n-3}]_F$ as follows:
        \begin{align*}
            \zeta(x_{-1}) &= v_0u^{-1}_0\\
            \zeta(x_j) &= u_j &(0\leq j\leq n-3)\\
            \zeta(y_j) &= u_0v^{-1}_0u^{-1}_jv_j &(1\leq j\leq n-3)
        \end{align*}
        We can check that $\zeta$ is well-defined and isomorphic. 
        \item For $(i, j)\in J$, let $s_{i, j}\in k[M^\dagger]\otimes_k k[Z^\circ]$ be elements as follows: 
        \[
            s_{i,j} = \left\{
                \begin{array}{ll}
                    1 & ((i,j) = (0, 1))\\
                    x_{-1}x_0 & ((i, j) = (0, 2))\\
                    x_{-1}x_{j-2}y_{j-2} & (i = 0, j > 2)\\
                    -x_{j-2} & (i = 1)\\
                    x_{-1}x_{0}x_{j - 2}(y_{j - 2}- 1) &(i = 2)\\
                    x_{-1}x_{i - 2}x_{j - 2}(y_{j - 2}- y_{i - 2}) &(i > 2)\\
                \end{array}
            \right.
        \]
        We can check that $\zeta(s_{i, j}) = f_{i, j}$ and $s_{i, j}$ is a unit of $k[M^\dagger]\otimes_k k[Z^\circ]$ for any $(i, j)\in J$. 
        \item Let $d\geq 2$ be a positive integer. 
        Let $S_{J, d}$ denote the following set: 
        \[
            S_{J, d} = \{\alpha = (\alpha_{i, j})\in\ZZ^{|J|}_{\geq 0}\mid \sum_{(i, j)\in J}\alpha_{i, j} = d\}
        \]
        \item For $\alpha\in S_{J, d}$, let $s_\alpha$ denote $\prod_{(i, j)\in J} s^{\alpha_{i, j}}_{i, j}\in k[M^\dagger]\otimes k[Z^\circ]$. 
        \item Let $X$ be a smooth closed sub variety of $\PP^{|J| - 1}_k$. 
        \item For $\alpha\in S_{J, d}$, let $\mathbb{W}^\alpha$ denote $\prod_{(i, j)\in J}W_{i, j}^{\alpha_{i, j}}$. 
        \item Let $\{t_\alpha\}_{\alpha\in S_{J, d}}$ denote coordinate functions of $\A^{|S_{J, d}|}_k$, $\mathscr{H}^n_d$ denote a closed subvariety of $\PP^{|J| - 1}_k\times \A^{|S_{J, d}|}_k$ defined by $\sum_{\alpha\in S_{J, d}}t_\alpha\mathbb{W}^\alpha = 0$, and $\mathscr{X}_d$ denote a closed subscheme $X\times \A^{|S_{J, d}|}_k\cap \mathscr{H}^n_d$ of $\PP^{|J| - 1}_k\times \A^{|S_{J, d}|}_k$. 
        \item Let $\theta$ be a composition of a closed immersion $\mathscr{X}_d\hookrightarrow \PP^{|J| - 1}_k\times \A^{|S_{J, d}|}_k$ and the second projection $\PP^{|J| - 1}_k\times \A^{|S_{J, d}|}_k\rightarrow \A^{|S_{J, d}|}_k$. 
        \item Let $u\colon S_{J, d}\rightarrow k[M^\dagger]\otimes_k k[Z^\circ]$ be a map such that $u(\alpha) = s_\alpha$  for $\alpha\in S_{J, d}$. 
        Then $(Z_{\pi}, S_{J, d}, u)$ be a mock polytope of $Z_{\pi}$.
        \item Let $J_0, J_1$, and $J_2$ denote the following subset of $J$: 
        \begin{align*}
            J_0 &= \{(0, 1)\}\\
            J_1 &= \{(i, j)\in J\mid i < 2, j > 1\}\\
            J_2 &= \{(i, j)\in J\mid i > 1\}
        \end{align*}
        We remark that $J = \coprod_{0\leq i\leq 2} J_i$. 
        \item For $\alpha\in S_{J, d}$, we define an integer $c_0(\alpha), c_1(\alpha)$, and $c_2(\alpha)$ as follows:
            \begin{align*}
                c_0(\alpha) &= \alpha_{0, 1}\\
                c_1(\alpha) &= \sum_{(i, j)\in J_1}\alpha_{i, j}\\
                c_2(\alpha) &= \sum_{(i, j)\in J_2}\alpha_{i, j}\\
            \end{align*}
        \item Let $\kappa$ denote a map $S_{J, d}\rightarrow \ZZ$ defined as follows: 
             \[
                \kappa(\alpha) = \left\{
                    \begin{array}{ll}
                        0 & (c_1(\alpha) = d)\\
                        2(d - c_1(\alpha)) - 1 & (c_1(\alpha) < d)\\
                    \end{array}
                \right.
             \]
        \item For non-negative integers $a, b$, and $c$, let $S_{d_0, d_1, d_2}$ denote the following subset of $S_{J, d}$: 
            \[
                S_{d_0, d_1, d_2} = \{\alpha\in S_{J, d}\mid (c_0(\alpha), c_1(\alpha), c_2(\alpha)) = (d_0, d_1, d_2)\}
            \]
        \item Let $\pi^1$ denote the morphism $N'\oplus\ZZ\rightarrow N\oplus\ZZ$ defined by $\pi^1 = (\pi, \id_\ZZ)$. Let $s^1$ denote the section of $\pi^1$ such that $s^1((v, r)) = (s(v), r)$ for  $(v, r)\in N\oplus\ZZ$. 
        \item Let $j^\circ$ denote the section of $\pr_1\colon N\oplus\ZZ\rightarrow N$ such that $\pr_2\circ j^\circ\equiv 0$. 
        \item Let $j'$ denote the section of $\pr_1\colon N'\oplus\ZZ\rightarrow N'$ such that $\pr_2\circ j'\equiv 0$. 
        \item Let $Z^1$ be a mock toric variety induced by $Z$ along $(\pr_1)_*\colon X(\Delta(\mathcal{B})\times\Delta_!)\rightarrow X(\Delta(\mathcal{B}))$ and $j^\circ$. 
        \item Let $\Kt$ denote the field of the puise\"{u}x functions over $\C$. 
    \end{itemize}
    Fundamentally, the following proposition shows the reason why finding one nonstably rational hypersurface over $\mathcal{K}$ implies the existence of a very general nonstably rational hypersurface over $k$.

    \begin{proposition}\label{prop: Bertini-stably rational}
        The following statements follow:
        \begin{enumerate}
            \item[(a)] There exists a non-empty open subset $U_{X, d}$ of $\A^{|S_{J, d}|}_k$ such that the restriction $\theta|_{\theta^{-1}(U_{X, d})}\colon \theta^{-1}(U_{X, d})\rightarrow U_{X, d}$ is smooth and projective. 
            \item[(b)] Let $K/k$ be a field extension. 
            We assume that $K$ is algebraically closed. 
            Let $x \in (U_{X, d})_K(K)$ be a point. 
            We assume that $(\mathscr{X}_{d})_{x}$ is not stably rational over $K$. 
            Then, a very general hypersurface of degree $d$ in $X$ is not stably rational over $k$. 
        \end{enumerate}
    \end{proposition}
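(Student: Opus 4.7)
For part (a), the plan is to combine Bertini's theorem with generic smoothness. The degree-$d$ monomials $\{\mathbb{W}^\alpha\}_{\alpha\in S_{J,d}}$ on $\PP^{|J|-1}_k$ have no common zero and hence restrict to a base-point-free linear subsystem of $|\OO_X(d)|$ on the smooth variety $X$. Since $\charac(k)=0$, Bertini implies that a general member of this linear system cuts $X$ transversely, yielding a smooth hypersurface in $X$. The locus of $(t_\alpha)\in\A^{|S_{J,d}|}_k$ for which the corresponding fiber of $\theta$ is smooth is therefore a non-empty Zariski-open subset $U_{X,d}$. Projectivity of $\theta|_{\theta^{-1}(U_{X,d})}$ is automatic, since $\mathscr{X}_d$ is closed in $\PP^{|J|-1}_k\times\A^{|S_{J,d}|}_k$ and $\theta$ is the second projection.

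For part (b), the plan is to propagate non-stable-rationality from the given $K$-point $x$ first to the geometric generic fiber of $\theta$, and then to very general $k$-points. Let $p$ denote the image of $x$ in $U_{X,d}$. Since stable rationality of a smooth projective variety in characteristic zero is invariant under extensions of algebraically closed fields, the assumption that $(\mathscr{X}_d)_x$ is not stably rational over $K$ implies that the geometric fiber of $\theta$ over $p$ is not stably rational. I will then invoke the specialization theorem for stable rationality in smooth projective families in characteristic zero (cf.\ \cite{NS19}): stable rationality is preserved under specialization in such families. Choosing a discrete valuation ring $R$ together with a morphism $\Spec R\to U_{X,d}$ whose closed point lies in $\overline{\{p\}}$ and whose generic point dominates the generic point $\eta$ of $U_{X,d}$, the contrapositive of specialization forces the geometric generic fiber $(\mathscr{X}_d)_{\overline{\kappa(\eta)}}$ to be non-stably-rational.

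Finally, to conclude that a very general hypersurface of degree $d$ in $X$ is not stably rational, I will use that $k$ is uncountable algebraically closed. The locus in $U_{X,d}(k)$ parametrizing stably rational fibers is contained in a countable union of proper Zariski-closed subsets, obtained by spreading out the countably many rational constructions that could trivialize the geometric generic fiber; its complement is therefore a very general set, on which stable rationality fails. The main obstacle is the precise invocation of the specialization theorem when $x$ does not correspond to the generic point of the base: one must construct a suitable $\Spec R$ linking $p$ to $\eta$ and verify that non-stable-rationality survives the base changes between algebraically closed fields that intervene. Once this is handled, the descent to a very general $k$-fiber reduces to the standard uncountability argument.
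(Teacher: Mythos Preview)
Your argument for (a) is correct and matches the paper's one-line invocation of Bertini. For (b), your approach is also correct but takes a more hands-on route than the paper. The paper observes that the image $p$ of $x$ in $U_{X,d}$ has non-stably-rational geometric fiber (by invariance of stable rationality under extensions of algebraically closed fields), and then directly cites \cite[Cor.~4.1.2]{NO21}: in a smooth projective family over an integral base in characteristic zero, the locus $A$ of points with stably rational geometric fiber is a countable union of closed subsets; since $p\notin A$ and $U_{X,d}$ is irreducible, each closed subset is proper, and the uncountability of $k$ finishes. Your argument instead passes through the geometric generic fiber as an intermediate step, invoking the specialization theorem of \cite{NS19} along a DVR linking $p$ to $\eta$, and only then running the countability argument. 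This is correct and is essentially how the cited corollary is proved, so you are unpacking the black box rather than citing it; the paper's route is shorter, yours is more self-contained. One small correction: for the specialization step to apply, you need the closed point of $\Spec R$ to map exactly to $p$, not merely to some point of $\overline{\{p\}}$ (otherwise you do not know the special fiber is non-stably-rational); such a DVR exists by the standard domination argument for Noetherian local domains, so this is easily repaired.
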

    \begin{proof}
         \item[(a)] From Bertini's theorem, the statement holds. 
        \item[(b)] For any $y\in U_{X, d}$, we fix a geometric point $\overline{y}$ of $y$. 
        Let $A$ denote the following subset of $U_{X, d}$. 
        \[
            A = \{y\in U_{X, d}\mid (\mathscr{X}_d)_{\overline{y}} \mathrm{\ is\ stably \ rational.}\}
        \]
        Let $h\colon (U_{X, d})_K\rightarrow U_{X, d}$ be a canonical map. 
        Then $h(x)\notin A$  because $K$ is algebraically closed. 
        Thus, from \cite[Cor.4.1.2]{NO21}, $A$ is countable unions of strict closed subsets of $U_{X, d}$. 
        Because $k$ is an uncountable field, a very general hypersurface of degree $d$ in $X$ is not stably rational over $k$.         
    \end{proof}
    We have not found a mock toric structure of $\Gr_\C(2, n)$. 
    Instead, we substitute the compactification of $T_{N^\dagger}\times Z^\circ$.  
    Moreover, we see the relationship with hypersurfaces in $\Gr_\C(2, n)$ and some hypersurfaces in this mock toric variety.

    \begin{proposition}\label{prop:grassman-mock}
        Let $D = (d_\alpha)_{\alpha\in S_{J, d}}\in k^{|S_{J, d}|}$ be a vector, $X_D\subset \PP^{|J| - 1}_k$ be the scheme theoretic intersection of $\Gr_k(2, n)$ and a hypersurface of degree $d$ defined by $\sum_{\alpha\in S_{J, d}}d_\alpha\mathbb{W}^\alpha = 0$, and $s_D\in k[M^\dagger]\otimes_k k[Z^\circ]$ be the following polynomial: 
        \[
            s_D = \sum_{\alpha\in S_{J, d}}d_\alpha s_\alpha
        \]
        Let $Y_D$ be a closed subscheme of $T_{N^\dagger}\times Z^\circ$ defined by $s_D = 0$.
        Then for general $D\in k^{|S_{J, d}|}$, $X_D$ and $Y_D$ are irreducible and birational. 
        \end{proposition}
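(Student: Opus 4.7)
The plan is to identify $Y_D$ scheme-theoretically with $X_D \cap \Gr^\circ_k(2,n)$ via the isomorphism $\zeta$, and then reduce the problem to classical Bertini--Lefschetz arguments on the Grassmannian side. Using $\zeta(s_{i,j}) = f_{i,j} = \xi^*(W_{i,j}/W_{0,1})$, one computes $\zeta(s_\alpha) = \xi^*(\mathbb{W}^\alpha)/\xi^*(W_{0,1}^d)$ for every $\alpha \in S_{J,d}$, hence
\[
\zeta(s_D) \;=\; \xi^*\Bigl(\sum_{\alpha \in S_{J,d}} d_\alpha\, \mathbb{W}^\alpha\Bigr)\Big/\xi^*(W_{0,1}^d).
\]
Since $\zeta$ identifies $\Spec(k[M^\dagger] \otimes_k k[Z^\circ]) = T_{N^\dagger} \times Z^\circ$ with $\Gr^\circ_k(2,n)$, and $W_{0,1}$ is a unit on $\Gr^\circ_k(2,n)$, the zero locus of $\zeta(s_D)$ on $\Gr^\circ_k(2,n)$ is precisely $X_D \cap \Gr^\circ_k(2,n)$. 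So $Y_D \cong X_D \cap \Gr^\circ_k(2,n)$ as closed subschemes.

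Next, for general $D \in k^{|S_{J,d}|}$ I would argue that $X_D$ is smooth and irreducible. Since the Pl\"ucker monomials of degree $d$ have no common zero on $\Gr_k(2,n)$, the linear system $|\mathcal{O}_{\Gr_k(2,n)}(d)|$ is base-point-free, and Bertini's theorem gives smoothness of $X_D$ for general $D$. Because $\Gr_k(2,n)$ is smooth projective of dimension $2(n-2) \geq 4$ (using $n \geq 4$) and $\mathcal{O}_{\Gr_k(2,n)}(d)$ is ample, the Lefschetz connectedness theorem guarantees that $X_D$ is connected, so together with smoothness it is irreducible.

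Finally, I would check that $X_D \cap \Gr^\circ_k(2,n)$ is non-empty, which forces it to be a dense open subset of the irreducible $X_D$; consequently $Y_D$ is irreducible and birational to $X_D$. If $X_D$ were contained in $\Gr_k(2,n) \setminus \Gr^\circ_k(2,n) = \bigcup_{(i,j) \in J}\bigl(\Gr_k(2,n) \cap \{W_{i,j}=0\}\bigr)$, then by irreducibility $X_D$ would lie in some single $\Gr_k(2,n) \cap \{W_{i,j}=0\}$, appearing as an irreducible component of the Cartier divisor cut out by $W_{i,j}$ on $\Gr_k(2,n)$ with some multiplicity $m \geq 1$; writing this divisor as $m X_D + E$ with $E$ effective and taking degrees against $\mathcal{O}_{\Gr_k(2,n)}(1)$ would yield $m d \leq 1$, contradicting $d \geq 2$. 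The only place requiring care is the first step, where the identification of $Y_D$ with $X_D \cap \Gr^\circ_k(2,n)$ must be verified scheme-theoretically (not merely set-theoretically) through the composition $\zeta^{-1}\circ \xi^*$; once that bookkeeping is in place, the rest is a direct application of Bertini, Lefschetz, and elementary intersection theory on $\Gr_k(2,n)$.
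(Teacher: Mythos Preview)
Your proposal is correct and follows essentially the same approach as the paper: both arguments identify $Y_D$ with $X_D\cap\Gr^\circ_k(2,n)$ via the isomorphism induced by $\zeta$ (the paper phrases this via the composite $\Theta$, but the content is the same pullback computation $\Theta^*(\mathbb{W}^\alpha/W_{0,1}^d)=s_\alpha$). The paper then simply asserts that for general $D$ the schemes $X_D$ and $Y_D$ are irreducible and birational, whereas you spell out the justification using Bertini smoothness, Lefschetz connectedness, and a Picard-group degree argument to rule out $X_D\subset\Gr_k(2,n)\setminus\Gr^\circ_k(2,n)$; these details are reasonable and fill in what the paper leaves implicit.
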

        \begin{proof}  
           Let $\Theta$ denote the following composition of morphisms: 
           \[
                T_{N^\dagger}\times Z^\circ\xrightarrow{\sim} \Spec(k[u_0, v_0, \ldots, u_{n-3}, v_{n-3}]_F)\xrightarrow{\sim}\Gr^\circ_k(2, n)\xrightarrow{\mathrm{Pl}}\PP^{|J| - 1}_k
           \]
           where first morphism is induced by $\zeta^{-1}$. 
           We can check that $\Theta^*(\frac{W_{i, j}}{W_{0, 1}}) = s_{i, j}$ for any $(i, j)\in J$ be the definition of $\xi^*$ and $\zeta$. 
           In particular, $\Theta^*(\frac{\mathbb{W}^\alpha}{W^d_{0, 1}}) = s_\alpha$ for any $\alpha\in S_{J, \alpha}$. 
           We recall that $\Gr^\circ_k(2, n)$ is an open subscheme of $\Gr_k(2, n)$. 
           Thus, for a general $D$, $X_D$ and $Y_D$ are irreducible and birational. 
        \end{proof} 
	Now, we proceed with the calculations using the discussions from Section 2. 
	
    \begin{proposition}\label{prop: computation result} 
        Let $S$ denote $S_{J, d}$,  $(a_\alpha)_{\alpha\in S}\in k^{|S|}$ be a vector, $l\in\ZZ_{>0}$ be a positive integer, and $f$ be the following function in $k[(Z^1)_{\pi^1}]$: 
        \[
            f = \sum_{\alpha\in S}a_\alpha(u\oplus l\kappa)(\alpha)
        \]
        Then the following statements hold:
        \begin{enumerate}
            \item[(a)] The rational polyhedral convex fan $(\Delta(\mathcal{B})\times\Delta_!)_{u\oplus\kappa, \pi^1}$ is strongly convex. 
            \item[(b)] Let $\Delta'_{(l)}$ denote $(\Delta(\mathcal{B})\times\Delta_!)_{u\oplus l\kappa, \pi^1}$. 
            Then $(\Delta'_{(l)})_{\bdd}$ consists of the following 7 cones: 
            \begin{align*}
                \tau_0 &= \RR_{\geq 0}(e_t -2l\sum_{0\leq j\leq n-3}e^\dagger_j)\\
                \tau_1 &= \RR_{\geq 0}(e_t -l\sum_{0\leq j\leq n-3}e^\dagger_j)\\
                \tau_2 &= \RR_{\geq 0}(e_t +l\sum_{0\leq j\leq n-3}e^\dagger_j)\\
                \tau_3 &= \RR_{\geq 0}(e_t +2l\sum_{0\leq j\leq n-3}e^\dagger_j)\\
                \sigma_0 &= \tau_0 + \tau_1\\                    \sigma_1 &= \tau_1 + \tau_2\\
                \sigma_2 &= \tau_2 + \tau_3
            \end{align*}
            \item[(c)] The following equations hold: 
            \begin{align*}
                S^{\tau_0} &= \bigcup_{1\leq i\leq d}S_{0, d-i, i}\\
                S^{\tau_1} &= S_{0, d-1, 1} \cup S_{0, d, 0}\\
                S^{\tau_2} &= S_{0, d, 0} \cup S_{1, d-1, 0}\\
                S^{\tau_3} &= \bigcup_{1\leq i\leq d}S_{i, d-i, 0}\\
                S^{\sigma_0} &= S_{0, d-1, 1}\\
                S^{\sigma_1} &= S_{0, d, 0}\\
                S^{\sigma_2} &= S_{1, d-1, 0}
            \end{align*}
            \item[(d)] Let $W_{(l)}$ denote the mock toric variety induced by $Z^1$ along $(\pi^1)_*\colon X(\Delta'_{(l)})\rightarrow X(\Delta(\mathcal{B})\times\Delta_!)$ and $s^1$. 
            Then $H_{W_{(l)}, f}\cap (W_{(l)})_{\sigma_1}$ is birational to a general hypersurface of degree $d$ in $\PP^{2n-5}_k$ for general $(a_\alpha)_{\alpha\in S}\in k^{|S|}$. 
            \item[(e)] For general $(a_\alpha)_{\alpha\in S}\in k^{|S|}$, both $H_{W_{(l)}, f}\cap (W_{(l)})_{\tau_1}$ and $H_{W_{(l)}, f}\cap (W_{(l)})_{\tau_2}$ are irreducible and rational. 
            \end{enumerate}
        \end{proposition}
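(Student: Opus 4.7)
The plan is to treat parts~(a)--(c) as explicit polyhedral combinatorics and parts~(d)--(e) as applications of the general machinery developed in Section~3. First I would fix representatives $\omega_\alpha\in M'\oplus\ZZ$ for the weights of the units $(u\oplus l\kappa)(\alpha)=s_\alpha t^{l\kappa(\alpha)}$. Since each $s_{i,j}$ is by construction a unit monomial in the $x_\bullet$ times a unit of $k[Z^\circ]$, the weight of $s_{i,j}$ is read off the formulas in the setup, and $\omega_\alpha=\sum_{(i,j)\in J}\alpha_{i,j}\omega_{i,j}+(0,l\kappa(\alpha))$. With these weights in hand, Proposition~\ref{prop: actual-configuration} together with Lemma~\ref{lem: supplement-of-valuation3} reduces the computation of $\Delta'_{(l)}$ to analysing the cone $D^\sigma_{u\oplus l\kappa,\pi^1}$ generated by $\{(\omega',0):\omega'\in(\sigma^\pi)^\vee\}$ and $\{(\omega_\alpha,1):\alpha\in S\}$, and to identifying its faces which do not contain $(0,1)$. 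The strong convexity asserted in~(a) follows from Proposition~\ref{prop: actual-configuration}(h) once this envelope is understood.

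The combinatorial heart of~(b) is the observation that $\omega_\alpha$ depends on $\alpha$ essentially through the triple $(c_0(\alpha),c_1(\alpha),c_2(\alpha))$: the $J_0$, $J_1$, $J_2$ factors contribute different shifts along $\eta_{-1}$ and on the $\omega$-coordinates, while the height $l\kappa(\alpha)$ depends only on $c_1(\alpha)$. Accordingly the lower envelope of $\{(\omega_\alpha,l\kappa(\alpha))\}_\alpha$ collapses to a one-parameter problem indexed by $c_1(\alpha)\in\{0,\ldots,d\}$ with vertical heights $0,l,3l,\ldots,(2d-1)l$. Enumerating the bounded faces of the dual cone disjoint from $\RR_{\geq 0}(0,1)$ then yields the four rays $\tau_0,\ldots,\tau_3$ and the three two-dimensional cones $\sigma_0,\sigma_1,\sigma_2$ listed in the statement. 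Part~(c) follows from Proposition~\ref{prop: actual-configuration}(k) by reading off which weights $(\omega_\alpha,l\kappa(\alpha))$ lie on each of the seven relevant faces; for instance, the ray $\sigma_1$ picks out precisely those $\alpha$ with $c_1(\alpha)=d$.

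For~(d), I would apply Lemma~\ref{lemma: polytope and stratification} together with Proposition~\ref{prop:Bertini computation} to identify $H_{W_{(l)},f}\cap(W_{(l)})_{\sigma_1}$ with the zero locus of $\sum_{\alpha\in S_{0,d,0}}a_\alpha(u\oplus l\kappa)^{\sigma_1}(\alpha)$, using $S^{\sigma_1}=S_{0,d,0}$ from~(c). Since every element of $S_{0,d,0}$ corresponds to a degree-$d$ monomial in the $|J_1|=2(n-2)$ units $\{s_{i,j}\}_{(i,j)\in J_1}$, Proposition~\ref{prop: computation of reduction step}(e) identifies $(u\oplus l\kappa)^{\sigma_1}$ with the evaluation of these monomials on the $(2n-5)$-dimensional torus $(W_{(l)})_{\sigma_1}$. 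The defining polynomial is therefore a general degree-$d$ form in $2n-4$ coordinates, whose zero locus is birational to a very general hypersurface of degree $d$ in $\PP^{2n-5}_k$.

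For~(e), I would invoke Proposition~\ref{prop:simple-slice-rational}. After using Proposition~\ref{prop: computation of reduction step} to replace the mock polytope structures on $(W_{(l)})_{\tau_1}$ and $(W_{(l)})_{\tau_2}$ by their concrete descriptions, the decompositions $S^{\tau_1}=S_{0,d-1,1}\sqcup S_{0,d,0}$ and $S^{\tau_2}=S_{0,d,0}\sqcup S_{1,d-1,0}$ split exactly as $S_1\sqcup S_2$ in the hypotheses of that proposition: the two summands differ by a single extra factor of a $J_2$-coordinate (for $\tau_1$) or a $J_0$-coordinate (for $\tau_2$), and that factor can be written in the required $\chi^\omega\cdot v\cdot w$ form using the explicit expressions of the $s_{i,j}$. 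The main obstacle throughout is the bookkeeping in~(b): one must check both that the listed seven cones are bounded and that no other bounded cones exist, and this requires a careful enumeration of the faces of the three-layered lower envelope of $\{(\omega_\alpha,l\kappa(\alpha))\}_\alpha$ together with a verification that every bounded vertex projects onto one of the four stated rays.
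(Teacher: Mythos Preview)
Your overall strategy matches the paper's: parts (a)--(c) via the explicit cone $D^\sigma_{u\oplus\kappa,\pi^1}$ from Proposition~\ref{prop: actual-configuration} and Lemma~\ref{lem: supplement-of-valuation3}, and parts (d)--(e) via Proposition~\ref{prop: computation of reduction step} together with Lemma~\ref{lemma: polytope and stratification} and Proposition~\ref{prop:simple-slice-rational}. Parts (a), (d), and (e) are essentially as in the paper.

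There is, however, a genuine gap in your treatment of (b). You assert that $\omega_\alpha$ ``depends on $\alpha$ essentially through the triple $(c_0(\alpha),c_1(\alpha),c_2(\alpha))$'' and that the lower envelope ``collapses to a one-parameter problem indexed by $c_1(\alpha)$''. This is false as stated: the weights $\varpi_{i,j}$ carry nontrivial $M$-components $\omega_{i-2,j-2}-\omega_{0,0}$ (for $i\geq 2$) and $\omega_{j-2,j-2}-\omega_{0,0}$ (for $i=0$, $j>2$), so $\varpi_\alpha$ genuinely depends on the full multi-index $\alpha$, not just on $(c_0,c_1,c_2)$. The collapse you need is not automatic; it is forced by Pl\"ucker-type identities of the form
\[
\varpi_{0,1}+\varpi_{i,j}=\varpi_{0,i}+\varpi_{1,j}+\omega_0 \quad\text{or}\quad \varpi_{0,1}+\varpi_{i,j}=\varpi_{0,j}+\varpi_{1,i}+\omega_0
\]
with $\omega_0\in\sigma_c^\vee\cap M$, which in turn rely on the relation $L_{i-2,j-2}=L_{i-2,i-2}-L_{j-2,j-2}$ and on the chain structure of $c\in\mathcal{C}$. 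The paper uses these identities in a sequence of exclusion steps to show that no face $\gamma\prec D^\sigma_{u\oplus\kappa,\pi^1}$ dual to a bounded cone can contain weights with $c_0c_2>0$, nor simultaneously weights from $S_{d_0,d-d_0,0}$ and $S_{0,d-d_2,d_2}$ with $d_0,d_2>0$, and so on. Without these identities you cannot rule out extra bounded faces coming from the $M$-direction.

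A second point you elide is that (b) must be established for \emph{every} $\sigma=\sigma_c\times[0,\infty)\in\Delta(\mathcal{B})\times\Delta_!$, not just for $\sigma=\{0\}\times[0,\infty)$: the fan $\Delta(\mathcal{B})$ has many maximal cones, and the content of (b) is that the bounded part of the refinement is the \emph{same} seven cones regardless of $c$. This uniformity again hinges on the Pl\"ucker identities above holding modulo $\sigma_c^\vee$ for arbitrary $c$, which is exactly what the chain condition on $c$ guarantees.
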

        \begin{proof}
            We prove the proposition from (a) to (e) in order:
            \begin{enumerate}
                \item[(a)] 
                Let $\sigma = \{0\}\in\Delta(\mathcal{B})\times\Delta_!$ be a cone. 
                From the definition of $(\Delta(\mathcal{B})\times\Delta_!)_{u\oplus\kappa, \pi^1}$, the minimal cone of $(\Delta(\mathcal{B})\times\Delta_!)_{u\oplus\kappa, \pi^1}$ is contained in ($\Delta(\mathcal{B})\times\Delta_!)_{u\oplus\kappa, \pi^1}^\sigma$. 
                Thus, from Proposition \ref{prop: actual-configuration}(h), it is enough to show that $D^\sigma_{u\oplus\kappa, \pi^1}$ is full cone in $(M'\oplus\ZZ^\vee)_\RR$. 
                For $(i, j)\in J$, let denote $\varpi_{i, j}\in M' = M\oplus M^\dagger$ as follows: 
                \[
                \varpi_{i, j} = \left\{
                    \begin{array}{ll}
                        0 & ((i, j) = (0, 1))\\
                        \eta_{-1} + \eta_0 & ((i, j) = (0, 2))\\
                        \eta_{-1} + \eta_{j - 2}  + (\omega_{j-2, j-2} - \omega_{0, 0}) & (i = 0, j > 2)\\
                        \eta_{j - 2} & (i = 1)\\
                        \eta_{-1} + \eta_{i - 2} + \eta_{j - 2} + (\omega_{i - 2, j-2} - \omega_{0, 0}) & (i > 1)
                    \end{array}
                \right.
             \]
            We remark that $s_{i, j} = \pm{\iota'}^*(\chi^{\varpi_{i, j}})$ for any $(i, j)\in J$. 
            For $\alpha\in S$, let $\varpi_{\alpha}\in M'$ denote as follows:
            \[
                \varpi_{\alpha} = \sum_{(i, j)\in J}\alpha_{i, j}\varpi_{i, j}
            \]
            Thus, from Corollary \ref{lem: supplement-of-valuation3}, $D^\sigma_{u\oplus\kappa, \pi^1}$ is generated by $\{(\omega, 0)\mid \omega\in M\oplus\ZZ^\vee\}\cup\{(\varpi_\alpha + \kappa(\alpha)\delta, 1)\mid \alpha\in S\}$. 
            From now, we will show that $\langle D^\sigma_{u\oplus\kappa, \pi^1}\rangle = (M'\oplus\ZZ^\vee\oplus\ZZ)_\RR$. 
            Let $\alpha_0$ denote $((0, 1), \ldots, (0, 1))\in S$. 
            Then $(\varpi_{\alpha_0}+\kappa(\alpha_0)\delta, 1)=((2d -1)\delta, 1)\in D^\sigma_{u\oplus\kappa, \pi^1}$. 
            This shows that $(0, 1)\in \langle D^\sigma_{u\oplus\kappa, \pi^1}\rangle$. 
            Similary, for any $0\leq j\leq n - 3$, we can check that $(d\eta_{j}, 1)\in D^\sigma_{u\oplus\kappa, \pi^1}$. 
            In particular, $(\eta_j, 0)\in \langle D^\sigma_{u\oplus\kappa, \pi^1}\rangle$ for any $0\leq j\leq n-3$. 
            Finally, we can check that $(d\eta_{-1}+d\eta_0, 1)\in D^\sigma_{u\oplus\kappa, \pi^1}$. 
            Therefore, $(\eta_{-1}, 0)\in \langle D^\sigma_{u\oplus\kappa, \pi^1}\rangle$. 
            Thus, these shows that $\langle D^\sigma_{u\oplus\kappa, \pi^1}\rangle = (M'\oplus\ZZ^\vee\oplus\ZZ)_\RR$.
            \item[(b)] We may assume that $l = 1$. 
            Let $\tau\in (\Delta'_{(1)})_{\bdd}$ be a cone. 
            From Proposition \ref{prop: actual-configuration}(f), there exists $\sigma\in\Delta(\mathcal{B})\times\Delta_!$ and $\tau'\prec C^\sigma_{u\oplus\kappa, \pi^1}$ such that $\pr_{1, \RR}(\tau') = \tau$ and $(0, 1)\notin\tau'$. 
            From the definition of $\sigma\in\Delta(\mathcal{B})\times\Delta_!$, there exists $c\in \mathcal{C}$ such that $\sigma = \sigma_c\times\{0\}$ or $\sigma_c\times[0, \infty)$. 
            Because $\tau\subset (\pi^1_\RR)^{-1}(\sigma)$ and $\tau\in (\Delta'_{(1)})_{\bdd}$, we may assume that $\sigma = \sigma_c\times[0, \infty)$. 
            Let $\gamma\prec D^\sigma_{u\oplus\kappa, \pi^1}$ be the dual face of $\tau'$, i.e. $\gamma = (\tau')^\perp\cap D^{\sigma}_{u\oplus\kappa, \pi^1}$. 
            In particular, $\tau' = \gamma^\perp\cap C^\sigma_{u\oplus\kappa, \pi^1}$. 

            Now, we will classify all such $\gamma$ from Step. 1 to Step. 9. 

            Step. 1. In this step, we show that $(\delta, 0)\notin\gamma$ and there exists $\alpha\in S$, such $(\varpi_\alpha+\kappa(\alpha)\delta, 1)\in \gamma$. 
            Because $\tau\not\subset M'_\RR\times\{0_{(\ZZ^\vee)_\RR}\}$, $(\delta, 0)\notin \gamma$. 
            In particular, for any $\omega\in\sigma^\vee_c$ and $r>0$, $(\omega+r\delta, 0)\notin\gamma$ because $(\omega, 0), (\delta, 0)\in D^\sigma_{u\oplus\kappa, \pi^1}$ and $\gamma\prec D^\sigma_{u\oplus\kappa, \pi^1}$. 
            Because $\gamma\prec D^{\sigma}_{u\oplus\kappa, \pi^1}$, $\gamma$ is generated by some generaters of $D^{\sigma}_{u\oplus\kappa, \pi^1}$. 
            If $\gamma$ doesn't contain $(\varpi_\alpha+\kappa(\alpha)\delta, 1)$ for any $\alpha\in S$, then $\gamma\subset (M'\oplus\ZZ^\vee)_\RR\times\{0\}$. 
            This inclusion indicates $(0, 1)\in\tau'$, and it is a contradiction. 

            Step. 2. Let $(i, j)\in J$ be an element with $i > 1$. 
            In this step, we show that there exists $\omega_0\in \sigma^\vee_c\cap M$ such that either following equation holds in $M'$: 
            \begin{align*}
                 \varpi_{0, 1} + \varpi_{i, j} &= \varpi_{0, i} + \varpi_{1, j} + \omega_0\\
                 \varpi_{0, 1} + \varpi_{i, j} &= \varpi_{0, j} + \varpi_{1, i} + \omega_0
            \end{align*}
            We write down $c$ explicitly as $c = (V_1, V_2, \ldots, V_s)$. 
            Let $V_{s+1}$ denote $\Gamma(\PP^{n-3}_k, \OO(1))$. 
            Let $k_1, k_2,$ and $k_3$ be positive integers such that $L_{i-2, i-2}\in V_{k_1}, L_{j-2, j-2}\in V_{k_2}, $and $L_{i-2, j-2}\in V_{k_3}$. 
            Because $L_{i-2, j-2} = L_{i-2, i-2} - L_{j-2, j-2}$, $k_3\leq \max\{k_1, k_2\}$. 
            If $k_3\leq k_1$, then $\omega_{i-2, j-2} - \omega_{i-2, i-2}\in \sigma^\vee_c$. 
            Similary, if $k_3\leq k_2$, then $\omega_{i-2, j-2} - \omega_{j-2, j-2}\in \sigma^\vee_c$. 
            Therefore, from the definition of $\varpi_{\cdot, \cdot}$, there exists $\omega_0\in\sigma^\vee_c\cap M$ such that the either equation above holds. 

            Step. 3. Let $d_0, d_1,$ and $d_2$ be integers and $\alpha\in S_{d_0, d_1, d_2}$ be an element. 
            We assume that $d_0d_2 > 0$. 
            In this step, We show that $(\varpi_{\alpha} + \kappa(\alpha)\delta, 1)\notin \gamma$ holds. 
            Indeed, from the assumption of $d_0$ and $d_2$ and Step. 2, there exists $\beta\in S_{(d_0 - 1, d_1 + 2, d_0 -1)}$, a positive integer $m$ and $\omega_0\in\sigma^\vee_c$ such that $(\varpi_\alpha + \kappa(\alpha)\delta, 1) = (\varpi_\beta + \kappa(\beta)\delta, 1) + (m\delta, 0) + (\omega_0, 0)$. 
            From the definition of $D^\sigma_{u\oplus\kappa, \pi^1}$, $(\varpi_\beta + \kappa(\beta)\delta, 1), (m\delta, 0), (\omega_0, 0)\in D^\sigma_{u\oplus\kappa, \pi^1}$. 
            Because $\gamma\prec D^\sigma_{u\oplus\kappa, \pi^1}$, $(\delta, 0)\in\gamma$. 
            However, it is a contradiction to Step. 1. 

            Step. 4. Let $d_0, d_2$ be positive integers, $\alpha\in S_{(d_0, d-d_0, 0)}$ and $\alpha'\in S_{(0, d-d_2, d_2)}$ be elements. 
            In this step, we show that $\{(\varpi_\alpha + \kappa(\alpha)\delta, 1), (\varpi_{\alpha'} + \kappa(\alpha')\delta, 1)\}\not\subset \gamma$. 
            There exist $\beta\in S_{(d_0 - 1, d - d_0, 1)}$, $\beta'\in S_{(1, d - d_2, d_2 - 1)}$ such that $\varpi_{\alpha} + \varpi_{\alpha'} = \varpi_{\beta} + \varpi_{\beta'}$ holds. 
            We remark that $\kappa(\alpha) = \kappa(\beta)$ and $\kappa(\alpha') = \kappa(\beta')$ holds. 
            Thus, if $\{(\varpi_\alpha + \kappa(\alpha)\delta, 1), (\varpi_{\alpha'} + \kappa(\alpha')\delta, 1)\}\subset \gamma$, then $(\varpi_\beta + \kappa(\beta)\delta, 1)\in \gamma$. 
            It is a contradiction to Step. 3. 

            Step. 5. Let $i > 1$ be an integer, $\alpha\in S_{0, d, 0}$ and $\alpha'\in S_{i, d - i, 0}$. 
            In this step, we show that $\{(\varpi_\alpha + \kappa(\alpha)\delta, 1), (\varpi_{\alpha'} + \kappa(\alpha')\delta, 1)\}\not\subset \gamma$. 
            There exist $\beta\in S_{(1, d - 1, 0)}$, $\beta'\in S_{(i - 1, d - i + 1, 0)}$, such that $\varpi_{\alpha} + \varpi_{\alpha'} = \varpi_{\beta} + \varpi_{\beta'}$ holds. 
            We remark that $\kappa(\alpha) + \kappa(\alpha') =\kappa(\beta) + \kappa(\beta') + 1$ holds. 
            Thus, if $\{(\varpi_\alpha + \kappa(\alpha)\delta, 1), (\varpi_{\alpha'} + \kappa(\alpha')\delta, 1)\}\subset \gamma$, we have that $(\delta, 0)\in \gamma$. 
            It is a contradiction to Step. 1. 

            Step. 6. Let $i > 1$ be an integer, $\alpha\in S_{0, d, 0}$ and $\alpha'\in S_{0, d - i, i}$. 
            In this step, we show that $\{(\varpi_\alpha + \kappa(\alpha)\delta, 1), (\varpi_{\alpha'} + \kappa(\alpha')\delta, 1)\}\not\subset \gamma$. 
            Indeed, we can check it as Step. 5. 

            Step. 7. Let $\tau'_0, \tau'_1, \tau'_2$, and $\tau'_3$ be cones in $(N'\oplus\ZZ\oplus\ZZ)_\RR$ as follows: 
            \begin{align*}
                \tau'_0 &= \RR_{\geq 0}(e_t -2\sum_{0\leq j\leq n-3}e^\dagger_j, 2d + 1)\\
                \tau'_1 &= \RR_{\geq 0}(e_t -\sum_{0\leq j\leq n-3}e^\dagger_j, d)\\
                \tau'_2 &= \RR_{\geq 0}(e_t +\sum_{0\leq j\leq n-3}e^\dagger_j, -d)\\
                \tau'_3 &= \RR_{\geq 0}(e_t +2\sum_{0\leq j\leq n-3}e^\dagger_j, -2d + 1)\\
            \end{align*}
            In this step, we show that the above cones are rays of $C^\sigma_{u\oplus\kappa, \pi^1}$. 
            We can check that the above cones are contained in $C^\sigma_{u\oplus\kappa, \pi^1}$. 
            It is enough to show that $(\tau'_j)^\perp \cap D^\sigma_{u\oplus\kappa, \pi^1}$ is a facet of $D^\sigma_{u\oplus\kappa, \pi^1}$.
            Let $\gamma_j$ denote $(\tau'_j)^\perp\cap D^\sigma_{u\oplus\kappa, \pi^1}$ for $0\leq j\leq 3$. 
            We can check that $\{(\omega, 0)|\omega\in M_\RR\}\subset\langle\gamma_j\rangle$ for any $0\leq j\leq 3$ because of the strong convexity of $\sigma_c$ and the definition of $\tau'_j$. 
            For $0\leq j\leq 3$, let $S(j)$ denote $\{\alpha\in S\mid (\varpi_\alpha+\kappa(\alpha)\delta, 1)\in\gamma_j\}$. 
            We can check the following equations:
                \begin{align*}
                    S(0) &= \bigcup_{1\leq i\leq d}S_{0, d-i, i}\\
                    S(1) &= S_{0, d-1, 1} \cup S_{0, d, 0}\\
                    S(2) &= S_{0, d, 0} \cup S_{1, d-1, 0}\\
                    S(3) &= \bigcup_{1\leq i\leq d}S_{i, d-i, 0}\\
                \end{align*}
            Thus, as the proof of (a), we can check that $\langle\gamma_j\rangle$ is a codimension $1$ linear subspace of $(M'\oplus\ZZ^\vee\oplus\ZZ)_\RR$. 

            Step. 8. We assume that $\tau'$ is a ray of $C^\sigma_{u\oplus\kappa, \pi^1}$. 
            In this step, we show that there exists $0\leq j\leq 3$ such that $\tau'= \tau'_j$. 
            Let $S^*$ denote $\{\alpha\in S\mid (\varpi_\alpha+\kappa(\alpha)\delta, 1)\in\gamma\}$. 
            Then, from Step. 3 to Step. 7, there exists $0\leq j\leq 3$ such that $S^*\subset S(j)$. 
            Moreover, we can check that $\{(\omega, 0)\mid \omega\in\sigma^\vee_c\}\subset \gamma_j$ for any $0\leq j\leq 3$. 
            Furthermore, $(\omega+r\delta, 0)\notin\gamma$ for any $\omega\in \sigma^\vee_c$ and $r > 0$ from Step. 1. 
            Thus, $\gamma\subset\gamma_j$. 
            Because $\tau'$ is a ray of $C^\sigma_{u\oplus\kappa, \pi^1}$, $\gamma = \gamma_j$. 
            In particular, $\tau' = \tau'_j$. 

            Step. 9. In this step, we classify $\tau'$. 
            From Step. 8, all rays of $\tau'$ are conteind in $\{\tau'_0, \tau'_1, \tau'_2, \tau'_3\}$. 
            We have already known $S(j)$ for $0\leq j\leq 3$. 
            Thus, from Step. 1, there are 7 possible forms for $\tau'$, as follows: $\tau'_0, \tau'_1, \tau'_2, \tau'_3, \tau'_{0, 1} = \tau'_0 + \tau'_1, \tau'_{1, 2} = \tau'_1 + \tau'_2, \tau'_{2, 3} = \tau'_2 + \tau'_3$. 
            Let $\gamma_{i, j}$ denote $(\tau'_{i, j})^\perp\cap D^\sigma_{u\oplus\kappa, \pi^1}$. 
            We can check that all $\langle\gamma_{i, j}\rangle$ is a codimention $2$ subspace of $(M'\oplus\ZZ^\vee\oplus\ZZ)_\RR$. 
            Thus, all $\tau'_{i, j}$ are faces of $C^\sigma_{u\oplus\kappa, \pi^1}$. 
            
            \item[(c)] From Step. 7 in the proof of (b), we can check the statement. 
            \item[(d)] From the definition of $\sigma_1$, $(\pr_1)_\RR\circ \pi^1_\RR (\sigma_1) = \{0\}$. 
            Thus, we can compute $(u\oplus{l\kappa})^{\sigma_1}$ from Proposition \ref{prop: computation of reduction step}.  
            Let $p\colon N\oplus N^\dagger\oplus\ZZ\rightarrow N^\dagger\oplus\ZZ$ be the projection and let $\sigma'_1$ denote $p_\RR(\sigma_1)$. 
            Then we can check that $(\sigma'_1)^{\perp}\cap (M^\dagger\oplus\ZZ^\vee)$ is generated by $\eta_{-1}$ and $\{\eta_j - \eta_0\}_{1\leq j\leq n-3}$. 
            Let $\alpha_1$ denote $((1, 2), (1, 2), \ldots, (1, 2))$. 
            Then $\alpha_1\in S^{\sigma_1}$. 
            Thus, we have $\val^{\sigma_1}_{x_0^d} = \val^{\sigma_1}_{u\oplus{l\kappa}}$. 
            For $(i, j)\in J_1$, let $s'_{i, j}$ denote the following elements in $k[Z^\circ]\otimes_k k[(\sigma'_1)^{\perp}\cap (M^\dagger\oplus\ZZ^\vee)]$:
            \[
                s'_{i, j} = \left\{
                    \begin{array}{ll}
                        x_{-1} & ((i, j) = (0, 2))\\
                        x_{-1}(x^{-1}_0x_{j - 2})y_{j - 2}& (i = 0, j > 2)\\
                        -1 & ((i, j) = (1, 2))\\
                        -(x^{-1}_0x_{j - 2}) & (i = 1, j>2)
                    \end{array}
                \right.
            \]
            Let $u'$ denote the map $S^{\sigma_1}\rightarrow k[Z^\circ]\otimes k[(\sigma'_1)^{\perp}\cap (M^\dagger\oplus\ZZ^\vee)]$ such that $u'(\alpha) = \prod_{(i, j)\in J_1}{s'_{i, j}}^{\alpha_{i, j}}$ for $\alpha\in S^{\sigma_1}$. 
            Then from Proposition \ref{prop: computation of reduction step}(f), $(u\oplus{l\kappa})^{\sigma_1}\sim u'$. 
            Let $f^{\sigma_1}$ denote $\sum_{\alpha\in S^{\sigma_1}}a_\alpha(u\oplus{l\kappa})^{\sigma_1}(\alpha)\in k[(W_{(l)})_{\sigma_1}]$ and $g$ denote $\sum_{\alpha\in S^{\sigma_1}}a_\alpha u'(\alpha)\in k[(W_{(l)})_{\sigma_1}]$. 
            Then $H^\circ_{\overline{(W_{(l)})_{\sigma_1}}, f} = H^\circ_{\overline{(W_{(l)})_{\sigma_1}}, g}$. 
                
            On the other hand, $\{s'_{i, j}\}_{(i, j)\in J_1\setminus\{(1, 2)\}}$ is a transcendence basis and a generator of the fraction field of $k[Z^\circ]\otimes_k k[(\sigma'_1)^{\perp}\cap (M^\dagger\oplus\ZZ^\vee)]$. 
            Moreover, $\Spec(k[Z^\circ]\otimes_k k[(\sigma'_1)^{\perp}\cap (M^\dagger\oplus\ZZ^\vee)])\rightarrow \Spec(k[\{s'_{i, j}\}_{(i, j)\in J_1}]) \cong {\A^{|J_1| - 1}_k}$ is an open immersion. 
            Thus, $H^\circ_{\overline{(W_{(l)})_{\sigma_1}}, g}$ is birational to a general hypersurface of degree $d$ in $\PP^{|J_1| - 1}_k$. 
            In particular, for general $(a_\alpha)_{\alpha\in S}\in k^{|S|}$, $H_{W_{(l)}, f}\cap (W_{(l)})_{\sigma_1} = H^\circ_{\overline{(W_{(l)})_{\sigma_1}}, f^{\sigma_1}}$ is birational to a general hypersurface of degree $d$ in $\PP^{|J_1| - 1}_k$ from Proposition \ref{prop: Bertini for fine} and Lemma \ref{lemma: polytope and stratification}. 
            We remark that $|J_1| = 2n - 4$. 
            \item[(e)] From the definition of $\tau_1$, $(\pr_1)_\RR\circ \pi^1_\RR(\tau_1) = \{0\}$. 
            Thus, we can compute $(u\oplus{l\kappa})^{\tau_1}$ from Proposition \ref{prop: computation of reduction step}(e). 
            Let $\tau'_1$ denote $p_\RR(\tau_1)$. 
            Then we can check that $(\tau'_1)^{\perp}\cap (M^\dagger\oplus\ZZ^\vee)$ is generated by $\eta_{-1}$, $\eta_0 + l\delta$, and $\{\eta_j - \eta_0\}_{1\leq j\leq n-3}$. 
            Let $\alpha_1$ denote $((1, 2), (1, 2), \ldots, (1, 2))$. 
            Then $\alpha_1\in S^{\tau_1}$. 
            Thus, we have $\val^{\tau_1}_{x_0^d} = \val^{\tau_1}_{u\oplus{l\kappa}}$. 
            For $(i, j)\in J_1\cup J_2$, $s''_{i, j}$ denote the following elements in $k[Z^\circ]\otimes_k k[(\tau'_1)^{\perp}\cap (M^\dagger\oplus\ZZ^\vee)]$:
            \[
                s''_{i, j} = \left\{
                    \begin{array}{ll}
                        x_{-1} & ((i, j) = (0, 2))\\
                        x_{-1}(x^{-1}_0x_{j - 2})y_{j - 2}& (i = 0, j > 2)\\
                        -1 & ((i, j) = (1, 2))\\
                        -(x^{-1}_0x_{j - 2}) & (i = 1)\\
                        x_{-1}(x^{-1}_0x_{j-2})(y_{j-2} - 1)(x_0t^l) & (i = 2)\\
                        x_{-1}(x^{-1}_0x_{j-2})(x^{-1}_0x_{i-2})(y_{j-2} - y_{i-2})(x_0t^l) & (i > 2)
                    \end{array}
                \right.
            \]
            Let $u''$ denote the map $S^{\tau_1}\rightarrow k[Z^\circ]\otimes k[(\tau'_1)^{\perp}\cap (M^\dagger\oplus\ZZ^\vee)]$ such that  $u''(\alpha) = \prod_{(i, j)\in J_1\cup J_2}{s''_{i, j}}^{\alpha_{i, j}}$ for $\alpha\in S^{\tau_1}$. 
            Then from Proposition \ref{prop: computation of reduction step}(e), $(u\oplus{l\kappa})^{\tau_1}\sim u''$. 
            Let $f^{\tau_1}$ denote $\sum_{\alpha\in S^{\tau_1}}a_\alpha(u\oplus{l\kappa})^{\tau_1}(\alpha)\in k[(W_{(l)})_{\tau_1}]$ and $g$ denote $\sum_{\alpha\in S^{\tau_1}}a_\alpha u''(\alpha)\in k[(W_{(l)})_{\tau_1}]$. 
            Then $H^\circ_{\overline{(W_{(l)})_{\tau_1}}, f} = H^\circ_{\overline{(W_{(l)})_{\tau_1}}, g}$. 

            We remark that $k[(\tau'_1)^{\perp}\cap (M^\dagger\oplus\ZZ^\vee)] \cong k[(\sigma'_1)^{\perp}\cap (M^\dagger\oplus\ZZ^\vee)]\otimes_k k[(x_0t^l)^{\pm}]$. 
            Moreover, we can check that the following three conditions hold:                 
            \begin{itemize}
                \item[(1)] The equation $S^{\tau_1} = S^{\sigma_0}\coprod S^{\sigma_1}$ from (c). 
                \item[(2)] For any $\alpha\in S^{\sigma_0}$, there exists $v(\alpha)\in k[Z^\circ]^*$ and $w(\alpha)\in k[(\sigma'_1)^{\perp}\cap (M^\dagger\oplus\ZZ^\vee)]^*$ such that $u''(\alpha) = (x_0t^l) v(\alpha)w(\alpha)$. 
                \item[(3)] For any $\beta\in S^{\sigma_1}$, there exists $v(\beta)\in k[Z^\circ]^*$ and $w(\beta)\in k[(\sigma'_1)^{\perp}\cap (M^\dagger\oplus\ZZ^\vee)]^*$ such that $u''(\beta) = v(\beta)w(\beta)$.
            \end{itemize}
            Thus, for general $(a_i)_{i\in S}\in k^{|S|}$, $H_{W_{(l)}, f}\cap (W_{(l)})_{\tau_1} = H^\circ_{\overline{(W_{(l)})_{\tau_1}}, f^{\tau_1}}$ from Proposition \ref{prop: Bertini for fine} and Lemma \ref{lemma: polytope and stratification}.                 
            Moreover, by taking a more general element,  $H^\circ_{\overline{(W_{(l)})_{\tau_1}}, g}$ is irreducible and rational from Proposition \ref{prop:simple-slice-rational}.                 
            Therefore, for general $(a_\alpha)_{\alpha\in S}\in k^{|S|}$, $H_{W_{(l)}, f}\cap (W_{(l)})_{\tau_1}$ is irreducible and rational. 
            Similarly, we can check that the statement holds for $H_{W_{(l)}, f}\cap (W_{(l)})_{\tau_2}$. 
        \end{enumerate}       
        \end{proof}
        The following theorem is the main theorem of this paper. 
        
        \begin{theorem}\label{thm: d in Grassmanian}
            If a very general hypersurface of degree $d$ in $\PP_\C^{2n-5}$ is not stably rational, then a very general hypersurface of degree $d$ in $\Gr_\C(2, n)$ is not stably rational.
        \end{theorem}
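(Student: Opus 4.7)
The plan is to reduce to showing one specific hypersurface $X_{D_l}\subset\Gr_\Kt(2,n)$ is not stably rational over $\Kt$, and then to appeal to Proposition \ref{prop: Bertini-stably rational}(b) to upgrade to the very general statement over $\C$. For general $(a_\alpha)_{\alpha\in S_{J,d}}\in\C^{|S_{J,d}|}$ and a sufficiently divisible $l\in\ZZ_{>0}$, I take $D_l := (a_\alpha t^{l\kappa(\alpha)})\in\Kt^{|S_{J,d}|}$. By Proposition \ref{prop:grassman-mock}, for general $(a_\alpha)$ the hypersurface $X_{D_l}$ is irreducible and birational to the corresponding $Y_{D_l}\subset T_{N^\dagger}\times Z^\circ$, so it suffices to compute $\VOL(\{Y_{D_l}\}_{\mathrm{sb}})$ and show it is not $\{\Spec\C\}_{\mathrm{sb}}$.

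To this end I apply Proposition \ref{prop:Bertini computation} to the abstract model mock polytope $(Z^1_{\pi^1},S_{J,d},[u\oplus l\kappa])$; the needed hypotheses follow from Proposition \ref{prop: computation result}(a) (strong convexity of $(\Delta(\mathcal{B})\times\Delta_!)_{u\oplus\kappa,\pi^1}$) and Lemma \ref{lem: modified fan}(e) (existence of a generically unimodular, specifically reduced, compactly arranged refinement after multiplying $\kappa$ by $l$). Proposition \ref{prop: computation result}(b) identifies the bounded cones as the four rays $\tau_0,\tau_1,\tau_2,\tau_3$ (contributing with sign $+1$) and the three two-dimensional cones $\sigma_0,\sigma_1,\sigma_2$ (contributing with sign $-1$), arranged in the chain $\tau_0\!-\!\sigma_0\!-\!\tau_1\!-\!\sigma_1\!-\!\tau_2\!-\!\sigma_2\!-\!\tau_3$. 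By Proposition \ref{prop: computation result}(d), for very general $(a_\alpha)$ the $\sigma_1$-stratum is irreducible and birational to a very general hypersurface $E$ of degree $d$ in $\PP^{2n-5}_\C$; by Proposition \ref{prop: computation result}(e), the $\tau_1$- and $\tau_2$-strata are irreducible and rational. For the remaining four cones $\tau_0,\tau_3,\sigma_0,\sigma_2$ I will run the same recipe: compute $(u\oplus l\kappa)^\sigma$ explicitly via Proposition \ref{prop: computation of reduction step}, split $S^\sigma$ (listed in Proposition \ref{prop: computation result}(c)) into two non-empty subsets sharing a common unit factor, and invoke Proposition \ref{prop:simple-slice-rational} to conclude irreducibility and rationality of the corresponding stratum.

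Assembling these contributions gives
\[
    \VOL\bigl(\{X_{D_l}\}_{\mathrm{sb}}\bigr) \;=\; 2\{\Spec\C\}_{\mathrm{sb}} - \{E\}_{\mathrm{sb}},
\]
and the hypothesis that $E$ is not stably rational forces the right-hand side to differ from $\{\Spec\C\}_{\mathrm{sb}}$; hence $X_{D_l}$ is not stably rational over $\Kt$, and Proposition \ref{prop: Bertini-stably rational}(b) closes the argument. The main obstacle is the rationality verification for the four auxiliary cones $\tau_0,\tau_3,\sigma_0,\sigma_2$: while the recipe is structurally the same as in Proposition \ref{prop: computation result}(e), one has to find, in each case, a bipartition of $S^\sigma$ compatible with a natural $\chi^\omega$-factorization of $u\oplus l\kappa$ modulo units---for the rays $\tau_0,\tau_3$ this is supplied by the stratification of $S^\sigma$ by $c_2(\alpha)$ (resp.\ $c_0(\alpha)$), and for $\sigma_0,\sigma_2$ by the dichotomy inside the single layer $S_{0,d-1,1}$ (resp.\ $S_{1,d-1,0}$) based on the unique $J_2$-index (resp.\ $J_1$-index). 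A secondary bookkeeping point is that the generic choice of $(a_\alpha)$ must lie in the (nonempty) intersection of the Zariski-open locus where Propositions \ref{prop:Bertini computation} and \ref{prop: computation result}(d) apply with the very general locus guaranteeing the $\sigma_1$-stratum $E$ is itself not stably rational; this intersection is nonempty precisely because $\C$ is uncountable.
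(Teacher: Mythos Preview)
Your approach has a genuine gap: the claim that the four strata over $\tau_0,\tau_3,\sigma_0,\sigma_2$ are rational is neither proved nor true in the generality you need, and Proposition~\ref{prop:simple-slice-rational} does not apply in the way you sketch. Concretely, run the computation of Proposition~\ref{prop: computation of reduction step} for $\tau_0$: the orbit lattice $(\tau_0')^{\perp}$ is spanned by $\eta_{-1}$, the $\eta_j-\eta_0$, and $\eta_0+2l\delta$, and after normalising one finds that $(u\oplus l\kappa)^{\tau_0}(\alpha)$ carries the factor $(x_0t^{2l})^{\,c_2(\alpha)-1}$. Thus $f^{\tau_0}$ is a polynomial of degree $d-1$ in the extra coordinate $x_0t^{2l}$, not of degree $\leq 1$; for $d\geq 3$ this is outside the scope of Proposition~\ref{prop:simple-slice-rational}, and a general degree-$(d-1)$ cover of a rational base is not rational in general. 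Even more decisively, look at $\sigma_2$: since $(\sigma_2')^{\perp}=(\sigma_1')^{\perp}$ and $S^{\sigma_2}=S_{1,d-1,0}$ consists of monomials with one trivial factor $s_{0,1}=1$ and $d-1$ factors from $J_1$, the very argument of Proposition~\ref{prop: computation result}(d) shows that $H_{W_{(l)},f}\cap (W_{(l)})_{\sigma_2}$ is birational to a \emph{general hypersurface of degree $d-1$} in $\PP^{2n-5}_\C$. Nothing in the hypothesis tells you such a hypersurface is stably rational, and for many $(d,n)$ it is not. Your asserted formula $\VOL=2\{\Spec\C\}_{\mathrm{sb}}-\{E\}_{\mathrm{sb}}$ therefore cannot be justified this way (and your ``dichotomy based on the unique $J_1$-index'' for $\sigma_2$ does not even parse when $d\geq 3$, since there are $d-1$ such indices).

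The paper's proof sidesteps this entirely by a comparison trick you are missing. From Proposition~\ref{prop: computation result}(c) one has $S^{\sigma_1}\cap S^{\tau_i}=\emptyset$ for $i\in\{0,3\}$ and $S^{\sigma_1}\cap S^{\sigma_i}=\emptyset$ for $i\in\{0,2\}$. Hence if one keeps $(a_\alpha)_{\alpha\notin S^{\sigma_1}}$ fixed and varies only $(a_\alpha)_{\alpha\in S^{\sigma_1}}$ to a second point $(a'_\alpha)$, the four troublesome strata are \emph{literally unchanged}, while the $\tau_1,\tau_2$ strata remain rational by Proposition~\ref{prop: computation result}(e). Subtracting the two volume formulas leaves
\[
\VOL(\{X_{D}\}_{\mathrm{sb}})-\VOL(\{X_{D'}\}_{\mathrm{sb}})=\{E^{(1)}_{\sigma_1}\}_{\mathrm{sb}}-\{F^{(1)}_{\sigma_1}\}_{\mathrm{sb}},
\]
and now one invokes \cite[Corollary~4.2]{NO22}: under the hypothesis that a very general degree-$d$ hypersurface in $\PP^{2n-5}_\C$ is not stably rational, a very general such hypersurface is not stably birational to the fixed variety $E^{(1)}_{\sigma_1}$, so one can choose $(a'_\alpha)$ with $\{E^{(1)}_{\sigma_1}\}_{\mathrm{sb}}\neq\{F^{(1)}_{\sigma_1}\}_{\mathrm{sb}}$. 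Then at least one of $X_D,X_{D'}$ is not stably rational over $\Kt$, and Proposition~\ref{prop: Bertini-stably rational}(b) finishes. The point is that you never need to know the stable birational type of the $\tau_0,\tau_3,\sigma_0,\sigma_2$ strata at all.
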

        \begin{proof}
             We use the notations in Proposition \ref{prop: Bertini-stably rational}, Proposition \ref{prop:grassman-mock}, and Proposition \ref{prop: computation result}. 
            Let $k$ denote $\C$ and $X$ denote $\Gr_\C(2, n)$. 
            We define the open subsets $U_0, U_1, U_2$ and $U_3$ of $\A^{|S|}_\C$ as follows: 
            \begin{enumerate} 
                \item[(0)] 
                From Lemma \ref{lem: modified fan}(e), there exists a positive integer $l$ and a refinement $\Delta'$ of $(\Delta(\mathcal{B})\times\Delta_!)_{u\oplus{l\kappa}, \pi^1}$ such that $\Delta'$ is generically unimodular, specifically reduced, and compactly arranged. 
                Let $\Delta''$ denote $(\Delta(\mathcal{B})\times\Delta_!)_{u\oplus{l\kappa}, \pi^1}$. 
                From Proposition\ref{prop: computation result}(a) and Lemma \ref{lem: modified fan}, $\Delta''$ is strongly convex. 
                Let $W'$ denote the mock toric variety induced by $Z^1$ along $(\pi^1)_*\colon X(\Delta'')\rightarrow X(\Delta(\mathcal{B})\times\Delta_!)$ and $s^1$. 
                We can check that $\Delta''_{\bdd}\subset \Delta''^{\geq 2}_{u\oplus l\kappa}$ from Proposition \ref{prop: dimension of polytope}, Proposition \ref{prop: computation of reduction step}(e), and Proposition \ref{prop: computation result}(c). 
                
                Let $U_0$ denote an open subset of $\A^{|S|}_\C$ in Proposition \ref{prop:Bertini computation}. 
                For the notation in Proposition \ref{prop: computation result} (b), let $\{E^{(j)}_{\tau_i}\}_{1\leq j\leq r_{\tau_i}}$ be all irreducible components of $H_{W', f}\cap W'_{\tau_i}$ for $0\leq i\leq 3$ and let $\{E^{(j)}_{\sigma_i}\}_{1\leq j\leq r_{\sigma_i}}$ be all irreducible components of $H_{W', f}\cap W'_{\sigma_i}$ for $0\leq i\leq 2$. 
                \item[(1)] Let $\lambda$ denote the automorphism of $\A^{|S|}_\Kt$ defined as $\lambda((c_\alpha)_{\alpha\in S}) = (t^{-l\kappa(\alpha)}c_\alpha)_{\alpha\in S}$ for $(c_\alpha)_{\alpha\in S}\in \Kt^{|S|}$. 
                Let $h$ denote a canonical map $\A^{|S|}_\Kt\rightarrow \A^{|S|}_\C$. 
                From Proposition \ref{prop:grassman-mock}(e), there exists an open subset $V$ of $\A^{|S|}_\Kt$ such that $X_D$ and $Y_D$ are irreducible and birational for any $D\in V(\Kt)$. 
                Let $U_1$ denote an open subset $h\circ\lambda(h^{-1}(U_{X, d})\cap V)$
                of $\A^{|S|}_\C$. 
                For $(a_\alpha)_{\alpha\in S}\in U_1(\C)$, let $D_0$ denote $(a_\alpha t^{l\kappa(\alpha)})_{\alpha\in S}\in \Kt^{|S|}$. 
                Then $D_0\in (U_{X, d})_\Kt(\Kt)$. 
                Moreover, let $f$ denote $\sum_{\alpha\in S}a_\alpha u(\alpha)t^{l\kappa(\alpha)}\in k[(Z^1)_{\pi^1}]$. 
                Then $X_{D_0}$ and $Y_{D_0} = H_{(Z^1)_{\pi^1}, f}\times_{\Gm^1}\Spec(\Kt)$ are birational. 
                In particular, $H_{(Z^1)_{\pi^1}, f}\times_{\Gm^1}\Spec(\Kt)$ is irreducible.
                \item[(2)] Let $U_2$ denote an open subset of $\A^{|S|}_\C$ in Proposition \ref{prop: computation result}(d). 
                In particular, $r_{\sigma_1} = 1$ and $E^{(1)}_{\sigma_1}$ is birational to a general hypersurface of degree $d$ in $\PP^{2n-5}_{\C}$. 
                \item[(3)]
                Let $U_3$ denote an open subset of $\A^{|S|}_\C$ in Proposition \ref{prop: computation result}(e). 
                In particular, $r_{\tau_1} = r_{\tau_2} = 1$ and $\{E^{(1)}_{\tau_1}\}_{\mathrm{sb}} = \{E^{(1)}_{\tau_2}\}_{\mathrm{sb}} = \{\Spec(\C)\}_{\mathrm{sb}}$. 
            \end{enumerate}
            Let $U$ denote $U_0\cap U_1\cap U_2 \cap U_3$. 
            Let $(a_\alpha)_{\alpha\in S}\in U(\C)$ be a point. 
            Then from Proposition \ref{prop:Bertini computation}, the following equation holds: 
            \begin{align*}
                \VOL(H^\circ_{W', f}\times_{\Gm^1}\Spec(\Kt)) &= \sum_{i = 0, 3}\biggl(\sum_{1\leq j\leq r_{\tau_i}} \bigl\{E^{(j)}_{\tau_i}\bigr\}_{\mathrm{sb}}\biggr)-\sum_{i = 0, 2}\biggl(\sum_{1\leq j\leq r_{\sigma_i}} \bigl\{E^{(j)}_{\sigma_i}\bigr\}_{\mathrm{sb}}\biggr)\\
                &+ 2\{\Spec(\C)\}_{\mathrm{sb}}-\{E^{(1)}_{\sigma_1}\}_{\mathrm{sb}}
            \end{align*}

            On the other hand, let $(b_\alpha)_{\alpha\in S^{\sigma_1}}\in \C^{|S^{\sigma_1}|}$ be a vector and $(a'_\alpha)_{\alpha\in S}$ denote the following element in $\C^{|S|}$: 
            \[
                a'_\alpha = 
                \left\{
                    \begin{array}{ll}
                        b_\alpha & 
                        (\alpha\in S^{\sigma_1})\\
                        a_\alpha & 
                        (\alpha\notin S^{\sigma_1})\\
                    \end{array}
                \right.
            \]
            Because $U\cap ((a_\alpha)_{\alpha\in S\setminus S^{\sigma_1}}\times \A^{|S^{\sigma_1}|}_\C)$ is a non-empty open subset of $(a_\alpha)_{\alpha\in S\setminus S^{\sigma_1}}\times \A^{|S^{\sigma_1}|}_\C$, $(a'_\alpha)_{\alpha\in S} \in U$ for general $(b_\alpha)_{\alpha\in S^{\sigma_1}}\in \C^{|S^{\sigma_1}|}$. 
            We assume that $(a'_\alpha)_{\alpha\in S} \in U$. 
            Let $f'$ denote $\sum_{\alpha\in S}a'_\alpha u(\alpha)t^{l\kappa(\alpha)}\in k[(Z^1)_{\pi^1}]$, $\{F^{(j)}_{\tau_i}\}_{1\leq j\leq r'_{\tau_i}}$ be all irreducible components of $H_{W', f'}\cap W'_{\tau_i}$ for $0\leq i\leq 3$, and $\{F^{(j)}_{\sigma_i}\}_{1\leq j\leq r'_{\sigma_i}}$ be all irreducible components of $H_{W', f'}\cap W'_{\sigma_i}$ for $0\leq i\leq 2$. 
            Let $\{f^{\tau_i}\}_{0\leq i\leq 3}$, $\{f^{\sigma_i}\}_{0\leq i\leq 2}$, $\{f'^{\tau_i}\}_{0\leq i\leq 3}$, and $\{f'^{\sigma_i}\}_{0\leq i\leq 2}$ denote the following elements: 
            \begin{itemize}
                \item $f^{\tau_i} = \sum_{\alpha\in S^{\tau_i}}a_\alpha(u\oplus{l\kappa})^{\tau_i}(\alpha)\in k[W'_{\tau_i}]$
                \item $f^{\sigma_i} = \sum_{\alpha\in S^{\sigma_i}}a_\alpha(u\oplus{l\kappa})^{\sigma_i}(\alpha)\in k[W'_{\sigma_i}]$
                \item $f'^{\tau_i} = \sum_{\alpha\in S^{\tau_i}}a'_\alpha(u\oplus{l\kappa})^{\tau_i}(\alpha)\in k[W'_{\tau_i}]$
                \item $f'^{\sigma_i} = \sum_{\alpha\in S^{\sigma_i}}a'_\alpha(u\oplus{l\kappa})^{\sigma_i}(\alpha)\in k[W'_{\sigma_i}]$
            \end{itemize}            
            From Proposition \ref{prop: computation result}(c), $S^{\sigma_1}\cap S^{\tau_i} = \emptyset$ for any $i\in\{0, 3\}$ and $S^{\sigma_1}\cap S^{\sigma_i} = \emptyset$ for any $i\in\{0, 2\}$. 
            This shows that $f^{\tau_i} = f'^{\tau_i}$ for any $i = 0, 3$ and $f^{\sigma_i} = f'^{\sigma_i}$  for any $i = 0, 2$. 
            Thus, from Lemma \ref{lemma: polytope and stratification}, $r_{\tau_i} = r'_{\tau_i}$ for $i = 0, 3$ and if it is necessary, we can replace the index such that $E^{(j)}_{\tau_i} = F^{(j)}_{\tau_i}$ for any $1\leq j\leq r_{\tau_i}$. 
            Similarly, $r_{\sigma_i} = r'_{\sigma_i}$ for $i = 0, 2$ and if it is necessary, we can replace the index such that $E^{(j)}_{\sigma_i} = F^{(j)}_{\sigma_i}$ for any $1\leq j\leq r_{\sigma_i}$. 
            Therefore, the following equation holds: 
            \[
                \VOL(H^\circ_{W', f}\times_{\Gm^1}\Spec(\Kt)) - \VOL(H^\circ_{W', f'}\times_{\Gm^1}\Spec(\Kt)) = \{E^{(1)}_{\sigma_1}\}_{\mathrm{sb}} - \{F^{(1)}_{\sigma_1}\}_{\mathrm{sb}}
            \]
            From the assumption, a very general hypersurface of degree $d$ in $\PP^{2n-5}_\C$ is not stably rational. 
            Thus, from \cite[Corollary.4.2]{NO22}, a very general hypersurface of degree $d$ in $\PP^{2n-5}_\C$ is not stably rational to $E^{(1)}_{\sigma_1}$. 
            Because $\C$ is uncountable, there exists $(a'_\alpha)_{\alpha\in S}\in U$ such that $\{E^{(1)}_{\sigma_1}\}_{\mathrm{sb}} \neq\{F^{(1)}_{\sigma_1}\}_{\mathrm{sb}}$. 
            Thus, for such $(a'_\alpha)_{\alpha\in S}$, either of $H^\circ_{W', f}\times_{\Gm^1}\Spec(\Kt)$ or $H^\circ_{W', f'}\times_{\Gm^1}\Spec(\Kt)$is not stably rational. 
            Because $(a_\alpha)_{\alpha\in S}, (a'_\alpha)_{\alpha\in S}\in U_1(\C)$, a very general hypersurface of degree $d$ in $\Gr_\C(2, n)$ is not stably rational from Proposition \ref{prop: Bertini-stably rational}(b). 
        \end{proof}
        The following corollary holds from \cite[Corollary 1.2]{S19} immediately. 
        \begin{corollary}\label{cor: log bound}
            If $n\geq 5$ and $d \geq 3 + \log_2(n-3)$, then a very general hypersurface of degree $d$ of $\Gr_\C(2, n)$ is not stably rational. 
        \end{corollary}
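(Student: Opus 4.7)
The plan is to combine Theorem \ref{thm: d in Grassmanian} with Schreieder's non-stable-rationality criterion. Theorem \ref{thm: d in Grassmanian} reduces the stable irrationality of a very general degree $d$ hypersurface in $\Gr_\C(2,n)$ to that of a very general degree $d$ hypersurface in $\PP^{2n-5}_\C$, so all that remains is to identify a range of $d$ for which the latter is known to hold and to check that the stated bound $d \geq 3 + \log_2(n-3)$ lies in that range.

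The reference \cite[Corollary 1.2]{S19} asserts that a very general hypersurface of degree $d$ in $\PP^{N+1}_\C$ fails to be stably rational once $d$ exceeds a logarithmic threshold in $N$. To apply this, I would set $N+1 = 2n-5$, so $N = 2(n-3)$. Under the hypothesis $n \geq 5$ this gives $N \geq 4$, which lies comfortably within the range where Schreieder's bound is valid. A short arithmetic check shows that $3 + \log_2(n-3) = 2 + \log_2(2(n-3)) = 2 + \log_2 N$, so the assumption $d \geq 3 + \log_2(n-3)$ is precisely the hypothesis required for Schreieder's theorem to give stable irrationality of a very general hypersurface of degree $d$ in $\PP^{2n-5}_\C$.

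With these two inputs in place, the conclusion is immediate: very general degree $d$ hypersurfaces in $\PP^{2n-5}_\C$ are not stably rational by Schreieder, and Theorem \ref{thm: d in Grassmanian} then transfers this conclusion to $\Gr_\C(2,n)$. There is essentially no obstacle here; the only nontrivial content already sits inside Theorem \ref{thm: d in Grassmanian}, whose proof constructed the mock-toric strictly toroidal model and exhibited the stratum $E^{(1)}_{\sigma_1}$ birational to a degree $d$ hypersurface in $\PP^{2n-5}_\C$. The corollary simply packages this reduction with the best available unconditional input on projective hypersurfaces.
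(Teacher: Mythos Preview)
Your proposal is correct and follows exactly the same approach as the paper: apply \cite[Corollary 1.2]{S19} to obtain stable irrationality of a very general degree $d$ hypersurface in $\PP^{2n-5}_\C$, then invoke Theorem \ref{thm: d in Grassmanian}. Your additional arithmetic check that $3 + \log_2(n-3) = 2 + \log_2 N$ for $N = 2(n-3)$ makes explicit the translation of Schreieder's bound, which the paper leaves implicit.
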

        \begin{proof}
            From \cite[Corollary 1.2]{S19}, a very general hypersurface of degree $d$ is not stably rational. 
            Thus, the statement follows from Theorem \ref{thm: d in Grassmanian}. 
        \end{proof}
    \bibliographystyle{amsplain}
    \bibliography{yoshino-bib}
\end{document}